\numberwithin{equation}{section}
\newtheorem{thm}{Theorem}[section]
\newtheorem*{theorem*}{Theorem}
\newtheorem{lma}[thm]{Lemma}
\newtheorem{cor}[thm]{Corollary}
\newtheorem{defn}[thm]{Definition}
\newtheorem{prop}[thm]{Proposition}
\newtheorem{rem}[thm]{Remark}
\newtheorem{ques}[thm]{Question}
\newtheorem{example}[thm]{Example}
\newcommand{\N}{\mathbb{N}}
\newcommand{\Rd}{\mathbb{R}^d}
\newcommand{\ubd}{\overline{\dim}_{\mathrm{B}}}
\newcommand{\bd}{\dim_{\mathrm{B}}}
\newcommand{\lbd}{\underline{\dim}_{\mathrm{B}}}
\newcommand{\uid}{\overline{\dim}_{\,\theta}}
\newcommand{\lid}{\underline{\dim}_{\,\theta}}
\newcommand{\upd}{\overline{\dim}^{\Phi}}
\newcommand{\lpd}{\underline{\dim}^{\Phi}}
\newcommand{\fix}{\mathrm{fix}}
\renewcommand{\epsilon}{\varepsilon}
\renewcommand{\geq}{\geqslant}
\renewcommand{\leq}{\leqslant}
\title{Intermediate dimensions of infinitely generated attractors}
\author{Amlan Banaji}
\address{Amlan Banaji, University of St Andrews, UK}
\email{afb8@st-andrews.ac.uk}
\author{Jonathan M. Fraser}
\address{Jonathan M. Fraser, University of St Andrews, UK}
\email{jmf32@st-andrews.ac.uk}
\begin{document}

\date{}

\newgeometry{top=0.7in,bottom=0.4in,left=1.2in,right=1.2in}

\begin{abstract}
We study the dimension theory of limit sets of iterated function systems consisting of a countably infinite number of contractions. Our primary focus is on the \emph{intermediate dimensions}: a family of dimensions depending on a parameter $\theta \in [0,1]$ which interpolate between the Hausdorff and box dimensions. Our main results are in the case when all the contractions are conformal. Under a natural separation condition we prove that the intermediate dimensions of the limit set are the maximum of the Hausdorff dimension of the limit set and the intermediate dimensions of the set of fixed points of the contractions. This builds on work of Mauldin and Urba\'nski concerning the Hausdorff and upper box dimension.  We give several (often counter-intuitive) applications of our work to dimensions of projections, fractional Brownian images, and general H\"older images. These applications apply to well-studied examples such as sets of numbers which have real or complex continued fraction expansions with restricted entries.

We also obtain several results without assuming conformality or any separation conditions. We prove general upper bounds for the Hausdorff, box and intermediate dimensions of infinitely generated attractors in terms of a topological pressure function. We also show that the limit set of a `generic' infinite iterated function system has box and intermediate dimensions equal to the ambient spatial dimension, where `generic' can mean either `full measure' or `comeagre.' \vspace{-.3cm}
\end{abstract}

\keywords{infinite iterated function system, conformal iterated function system, intermediate dimensions, Hausdorff dimension, box dimension, topological pressure function, continued fractions \\
\indent \emph{Journal ref.:} Trans. Amer. Math. Soc. \textbf{376} (2023), 2449–2479}

\subjclass[2020]{28A80 (Primary), 37B10, 11K50 (Secondary)}

\maketitle

\tableofcontents
\restoregeometry 

\section{Introduction}

\subsection{Background}

An iterated function system (IFS) is a finite set of contractions $\{ S_i \colon X \to X\}_{i \in I}$, where $X$ is a closed subset of Euclidean space. To each IFS one can define an associated limit set (or attractor) $F$ satisfying $F = \cup_{i \in I} S_i(F)$, which will often be fractal in nature. IFSs and the dimension theory of the associated limit sets have been studied extensively since Hutchinson's important paper~\cite{Hutchinson1981}. In a seminal 1996 paper~\cite{Mauldin1996} Mauldin and Urbański extended the theory to infinite iterated function systems (IIFSs) consisting of countably many contractions, with the contraction ratios uniformly bounded above by some $\rho < 1$.  The dimension theory of IIFSs has been studied further in~\cite{Mauldin1999,Ngai2016,Chu2020,Kaenmaki2014,Mauldin1995:Fractals1995} and many other works. Mauldin and Urbański showed that there are many similarities, but also many differences, between finite and infinite iterated function systems, and paid particular attention to (infinite) conformal iterated function systems (CIFSs, defined in Definition~\ref{cifs}), where the contractions are conformal and are sufficiently separated. One notable difference is that the two most familiar notions of fractal dimension, namely the Hausdorff and box dimensions (defined in Section~\ref{definedimsec}), coincide for the limit set of any finite CIFS, but can differ for infinite CIFSs, which indicates that the presence of infinitely many maps can cause the limit set to have greater inhomogeneity in space. 
In particular, Mauldin and Urbański showed that for a CIFS the Hausdorff dimension can be determined from a certain topological pressure function defined in~\eqref{MUpressure} below (see \cite[Theorem~3.15]{Mauldin1996}). 
The same authors proved that the upper box and packing dimensions are given by the maximum of the Hausdorff dimension of the limit set and the upper box dimension of images of any given point under the maps in the CIFS (noting that the box dimension of a countable set, unlike the Hausdorff dimension, can be strictly positive), see \cite[Theorem~2.11]{Mauldin1999}. 
They applied their results to sets of irrational numbers whose continued fraction expansions have restricted entries, as these sets are limit sets of a CIFS (see Section~\ref{ctdfracsect}). 

The key difference between the box and Hausdorff dimensions is that the box dimension is defined by covering a set with sets whose diameters are equal, whereas the definition of the Hausdorff dimension does not restrict the sizes of the covering sets. Accordingly, Falconer, Fraser and Kempton in~\cite{Falconer2020} introduced the \emph{intermediate dimensions}, defined in Section~\ref{definedimsec}, which require that the diameters of the covering sets lie in an interval $[\delta,\delta^\theta]$ for some parameter $\theta \in (0,1)$; these dimensions lie between the Hausdorff and box dimensions. In this paper we study the intermediate dimensions of limit sets of infinite iterated function systems. 
The intermediate dimensions have been studied further in~\cite{Banaji2022moran,Falconer2021,Fraser2021-1,
Burrell2021,Burrell2022brownian,Tan2020,
Burrell2022spirals,Falconer2021-2,Daw2023,
Banaji2021bedford} and have been generalised to the $\Phi$-intermediate dimensions by Banaji~\cite{Banaji2023gen} to give more refined geometric information about sets for which the intermediate dimensions are discontinuous at $\theta=0$, which by Theorem~\ref{mainint} can happen for the limit sets studied in this paper (see the discussion after Theorem~\ref{ctdfracintthm}). 
The intermediate dimensions are an example of a broader notion of `dimension interpolation' (see the survey~\cite{Fraser2021-1}), which seeks to find a geometrically natural family of dimensions which lie between two familiar notions of dimension. The family should lead to an interesting theory and satisfy properties that would be expected of reasonable notions of dimensions, and will often have some similarities with the two notions of dimension, thus potentially leading to a better understanding of the similarities and differences between the two dimensions. The other main example of dimension interpolation is the Assouad spectrum, which was introduced by Fraser and Yu~\cite{Fraser2018-2} to interpolate between the box and Assouad dimensions. 
We have investigated the Assouad spectrum of inﬁnitely generated self-conformal sets in~\cite{Banaji2022assouad}. 
 
 \subsection{Structure of paper and discussion of results}
 
 In Section~\ref{setting} we introduce notation, define limit sets, and define the notions of dimension we will be working with. We introduce and prove basic properties about the topological pressure function that we will use to obtain bounds for dimensions of the limit sets. We also define conformal iterated function systems (CIFSs) and prove geometric consequences of the definition of a CIFS that we will use in the proof of our main result, Theorem~\ref{mainint}. 
 
 In Section~\ref{mainsect} we prove Theorem~\ref{inttypeub}, which gives upper bounds for the Hausdorff, box, intermediate and $\Phi$-intermediate dimensions in terms of the topological pressure function that hold in the very general setting of arbitrary IIFSs (without any conformality assumptions or separation conditions). The proof is an induction argument, using efficient covers at larger scales to construct efficient covers at smaller scales. The natural covers we construct generally use many different scales in the allowable range. This is in contrast to sets whose intermediate dimensions have previously been calculated such as spirals~\cite{Burrell2022spirals}, sequences~\cite[Section~3.1]{Falconer2020}, and concentric spheres and topologist's sine curves~\cite{Tan2020}, where only the two extreme scales were used in the cover. 
 Recently, it has been shown that more than two scales are needed to attain the intermediate dimensions certain inhomogeneous Moran sets (see~\cite[Remark~3.12]{Banaji2022moran}) and Bedford--McMullen carpets for small values of $\theta$ (see~\cite[Corollary~2.2]{Banaji2021bedford}). In the conformal setting, Mauldin and Urbański~\cite{Mauldin1996,Mauldin1999} proved results for the Hausdorff and upper box dimensions. We use our upper bound to prove our main result: 
 \begin{theorem*}[See Theorem~\ref{mainint} for a stronger statement]
 If $F$ is the limit set of an (infinite) CIFS (defined in Definition~\ref{cifs}) and $P$ is the set of fixed points of the contractions then for all $\theta \in [0,1]$, 
 \[ \uid F = \max\{\dim_\mathrm{H} F, \uid P\}.\] 
 \end{theorem*}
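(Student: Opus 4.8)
The inequality $\uid F \ge \max\{\dim_\mathrm{H} F, \uid P\}$ is the easy direction and uses neither conformality nor the separation condition. Indeed, $\dim_\mathrm{H} F \le \uid F$ holds for every $\theta \in [0,1]$ by the elementary monotonicity properties of the intermediate dimensions recalled in Section~\ref{definedimsec}; and each fixed point $x_i$ is coded by the word $iii\cdots$ and hence lies in $F$, so $P \subseteq F$ and $\uid P \le \uid F$ by monotonicity of $\uid$ under inclusions. (If $F$ is defined without taking a closure, one also uses that $\uid$ is invariant under passing to closures.)

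For the reverse inequality the plan is to quote Theorem~\ref{inttypeub}, which is proved in Section~\ref{mainsect} for arbitrary IIFSs and which carries the bulk of the work: an inductive construction of covers of $F$ which pass efficiently through the maps $S_\omega$ and genuinely use the whole range of admissible scales $[\delta,\delta^\theta]$. Applied to our CIFS, that theorem gives an upper bound for $\uid F$ in terms of the topological pressure function and $\uid P$; after unwinding the notation in the conformal case it reads $\uid F \le \max\{ s^{\ast}, \uid P\}$, where $s^{\ast}$ is the zero of the pressure function, i.e.\ the quantity appearing in~\eqref{MUpressure}. It then remains only to identify $s^{\ast}$ with $\dim_\mathrm{H} F$.

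This identification is precisely where the defining properties of a CIFS enter, and it is not new. It is the combination of the elementary pressure estimates developed in Section~\ref{setting} (using, in particular, the bounded distortion property derived there from Definition~\ref{cifs}, which also guarantees that the pressure is a genuine limit, so that $s^{\ast}$ is well defined) with the theorem of Mauldin and Urbański \cite[Theorem~3.15]{Mauldin1996} asserting that $\dim_\mathrm{H} F = s^{\ast}$ for every CIFS. Feeding this back into the bound from Theorem~\ref{inttypeub} yields $\uid F \le \max\{\dim_\mathrm{H} F, \uid P\}$ for all $\theta \in (0,1)$.

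The two endpoints require no new argument: for $\theta = 0$ the assertion reads $\dim_\mathrm{H} F = \max\{\dim_\mathrm{H} F, \dim_\mathrm{H} P\}$, which is immediate since the countable set $P$ has $\dim_\mathrm{H} P = 0$; and for $\theta = 1$ it is exactly Mauldin and Urbański's formula $\ubd F = \max\{\dim_\mathrm{H} F, \ubd P\}$ for the upper box dimension. The single genuine obstacle in the whole proof is the covering construction behind Theorem~\ref{inttypeub}: in contrast to the sets whose intermediate dimensions have previously been computed, a near-optimal cover here must simultaneously use coarse scales near the accumulation points of $P$, where $F$ is sparse, and fine scales inside the finitely many \enquote{large} cylinders, where $F$ looks like a finite CIFS attractor, and one must balance these two regimes uniformly in $\delta$ via the pressure function.
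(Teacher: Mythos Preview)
There is a genuine gap in your upper-bound argument. Theorem~\ref{inttypeub}\eqref{intub} does \emph{not} read $\uid F \le \max\{h,\uid P\}$ after unwinding; its conclusion is
\[
\uid F \le \max\Bigl\{h,\ \lim_{n\to\infty}\inf\bigl\{\,\uid P_n : P_n \subseteq X,\ P_n\cap S_w(X)\neq\varnothing\ \forall w\in I^n\,\bigr\}\Bigr\},
\]
involving sets $P_n$ that hit every \emph{level-$n$} cylinder for arbitrarily large $n$. The inductive covering construction behind Theorem~\ref{inttypeub} cannot simply be run at level $n=1$: it requires a level $q$ with $\phi_q(s)\le 1/2$ and $\rho^q<1/4$, which forces $q$ to be large. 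So what you actually obtain from Theorem~\ref{inttypeub} is a bound in terms of $\uid P_q$ for some deep $q$, and you still have to show $\uid P_q \le \max\{h,\uid P\}$.

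This last step is precisely the content of Lemmas~\ref{samewithinlevel} and~\ref{changelevel}, and neither is free. Lemma~\ref{samewithinlevel} (any two choices of $P_n$ at a fixed level have the same $\uid$) rests on Lemma~\ref{fitin}, which uses the OSC and the cone condition to bound how many large cylinders can meet a small ball; Lemma~\ref{changelevel} (either $\uid P_n \le \theta_S \le h$ for all $n$, or $\uid P_n = \uid P_1$ for all $n$) needs its own inductive covering argument of the same flavour as Theorem~\ref{inttypeub}. Your assertion that the CIFS hypotheses enter only through Mauldin--Urba\'nski's identification $\dim_\mathrm{H} F = h$ is therefore not right: conformality and separation are also used, via Lemma~\ref{fitin}, to pass from the deep-level sets $P_q$ back down to the first-level set $P$.
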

 In Example~\ref{proj} we consider an example with intermediate dimensions continuous at $\theta=0$ and apply a result of Burrell, Falconer and Fraser~\cite{Burrell2021} to give an upper bound for the upper box dimension of a typical orthogonal projection.  

 In Section~\ref{ctdfracsect} we apply our results to give a formula in Theorem~\ref{ctdfracintthm} for the upper intermediate dimensions of sets of irrational numbers whose continued fraction expansions have restricted entries. The intermediate dimensions of some continued fraction sets will have a form with a phase transition that has not previously been seen in `natural' sets, see Figure~\ref{fig:holder}. We apply our results to give information about the possible H\"older coefficients of maps between continued fraction sets in Example~\ref{holderint}, observing that in some cases the intermediate dimensions for $\theta \in (0,1)$ provide better information than either the Hausdorff or box dimensions, unlike in previous cases such as elliptical polynomial spirals~\cite{Burrell2022spirals} where the box dimension gives the best information. We also show that the intermediate dimensions of continued fraction sets are continuous at $\theta=0$, and we use this fact and apply a result of Burrell~\cite{Burrell2022brownian} to show in Corollary~\ref{brownian} that the upper \emph{box} dimension of the image of any continued fraction set under index-$\alpha$ fractional Brownian motion is almost surely less than 1 if $\alpha$ is greater than the \emph{Hausdorff} dimension of the set. We also obtain similar results in Section~\ref{compsect} for sets of complex numbers which have complex continued fraction expansions with restricted entries. 
 
 In Section~\ref{genericsect} we consider the limit sets of `generic' IIFSs, in the same vein as the seminal paper~\cite{Falconer1988} where Falconer considered the generic dimension of a (finitely-generated) self-affine set by fixing a set of matrices and randomising the translates in a suitable way. We show that under certain conditions, the limit set of an IIFS with generic translates is somewhere dense, and so in particular the box and intermediate dimensions equal the ambient spatial dimension, where `generic' can have either a measure-theoretic or topological meaning. 
 This is in stark contrast to the Hausdorff dimension, which K\"aenm\"aki and  Reeve~\cite{Kaenmaki2014} showed satisfies an analogue of Falconer's affinity dimension formula for a generic IIFS of affine contractions. 

In this paper, as in~\cite{Mauldin1996,Mauldin1999}, the separation condition we assume in Definition~\ref{cifs} for a CIFS is the open set condition (OSC). Ngai and Tong~\cite{Ngai2016} and Chu and Ngai~\cite{Chu2020} study the Hausdorff, box and packing dimensions of the limit sets of IIFSs with overlaps that do not satisfy the OSC but do satisfy suitable extensions of the weak separation condition. This raises the following question. 

\begin{ques}

What can be said about the intermediate or $\Phi$-intermediate dimensions of the limit sets of infinite iterated function systems with overlaps that do not satisfy the open set condition but perhaps satisfy weaker separation conditions such as the extensions of the weak separation condition considered in~\cite{Ngai2016}? 

\end{ques}

\section{Setting and preliminaries}\label{setting}

\subsection{Notation and notions of dimension}\label{definedimsec}

We denote the natural logarithm by $\log$, the cardinality of a set using $\#$, the (Euclidean) diameter of a subset of $\Rd$ by $|\cdot|$, and $d$-dimensional Lebesgue measure on $\Rd$ by $\mathcal{L}^d$. The symbol $\N$ will denote $\{1,2,3,\ldots\}$, and $||\cdot||$ will denote either the Euclidean norm on $\Rd$ or the supremum norm of a continuous function, depending on context. 
We write 
\[ B(x,r) \coloneqq \{ \, y \in \Rd : ||y-x|| < r \, \}\ \] 
for the open ball of radius $r>0$ centred at $x \in \Rd$. %
For $F \subseteq \Rd$ let $N_r(F)$ be the smallest number of balls of radius $r$ needed to cover $F$. 
For $U \subseteq \Rd$ and $\delta >0$ let 
\[ \mathcal{S}_\delta (U) \coloneqq \{ \, x \in \Rd : \mbox{ there exists } y \in U \mbox{ such that } ||x-y|| \leq \delta \, \}\] be the closed $\delta$-neighbourhood of $U$. 
For $d \in \N$ and $r \geq 1$, we denote by $A_{d,r} \in \N$ the smallest integer such that for all $U \subset \Rd$ there exist $U_1,\ldots,U_{A_{d,r}} \subseteq \Rd$, each of diameter $|U|/r$, such that 
 \begin{equation}\label{doublingconst}
 U \subseteq \bigcup_{k=1}^{A_{d,r}} U_k.
 \end{equation}

For non-empty, bounded subsets $F$ of $\mathbb{R}^d$ with the Euclidean metric, the upper and lower box dimensions are defined by 
\[ \ubd F \coloneqq \limsup_{\delta \to 0^+} \frac{\log N_\delta(F)}{-\log \delta}; \qquad \lbd F \coloneqq \liminf_{\delta \to 0^+} \frac{\log N_\delta(F)}{-\log \delta}.\]
The packing dimension is 
\[ \dim_\mathrm{P} F \coloneqq \inf\left\{\, \sup_{i \in \N} \ubd F_i : F \subseteq \bigcup_{i=1}^\infty F_i, \mbox{ each } F_i \mbox{ non-empty and bounded}\, \right\}. \]
We can define the Hausdorff dimension without using Hausdorff measure by 
\begin{equation}\label{hausdorffdef}
\begin{aligned} \dim_\mathrm{H} F = \inf \{ \, s \geq 0 : &\mbox{ for all } \epsilon >0 \mbox{ there exists a finite or countable cover } \\*
& \{U_1,U_2,\ldots\} \mbox{ of } F \mbox{ such that } \sum_i |U_i|^s \leq \epsilon \,\},
\end{aligned}
\end{equation}%
see~\cite[Section~3.2]{Falconer2014}. In the definition of Hausdorff dimension there is no restriction on the size of the covering sets, whereas for the box dimension all the sets in the cover have the size. 
The intermediate dimensions, defined below, lie between the Hausdorff and box dimensions and were introduced in~\cite{Falconer2020}, with the restriction on the sizes of the covering sets depending on a parameter $\theta$. 
\begin{defn}\label{intdef}
For $\theta \in (0,1]$, the \emph{upper $\theta$-intermediate dimension} of a non-empty, bounded subset $F \subset \Rd$ is given by 
\begin{align*} \uid F = \inf \{ \, s \geq 0 : &\mbox{ for all } \epsilon >0 \mbox{ there exists } \delta_0 \in (0,1] \mbox{ such that for all } \\ 
&\delta \in (0,\delta_0) \mbox{ there exists a cover } \{U_1,U_2,\ldots\} \mbox{ of } F \\
&\mbox{ such that } \delta^{1/\theta} \leq |U_i| \leq \delta \mbox{ for all } i, \mbox{ and } \sum_i |U_i|^s \leq \epsilon \, \}.
\end{align*}
Similarly the \emph{lower $\theta$-intermediate dimension} of $F$ is 
\begin{align*} \lid F = \inf \{ \, s \geq 0 : &\mbox{ for all } \epsilon >0 \mbox{ and } \delta_0 \in (0,1] \mbox{ there exists } \delta \in (0,\delta_0) \\ &\mbox{ and a cover } \{U_1,U_2,\ldots\} \mbox{ of } F \mbox{ such that } \delta^{1/\theta} \leq |U_i| \leq \delta \\ &\mbox{ for all } i, \mbox{ and } \sum_i |U_i|^s \leq \epsilon \,\}.
\end{align*}
By definition, $\overline{\dim}_{\,1} = \ubd$ and $\underline{\dim}_{\,1} = \lbd$, and we define $\overline{\dim}_{\,0} = \underline{\dim}_{\,0} \coloneqq \dim_\mathrm{H}$. 
\end{defn}
For all non-empty, bounded $F \subset \Rd$, these satisfy the inequalities 
\[ 0 \leq \dim_\mathrm{H} F \leq \lid F \leq \uid F \leq \ubd F \leq d; \qquad \lid F \leq \lbd F \leq \ubd F, \]
and the maps $\theta \mapsto \uid F$ and $\theta \mapsto \lid F$ are increasing in $\theta \in [0,1]$ and continuous in $\theta \in (0,1]$ but not always at $\theta = 0$. 
In~\cite{Banaji2023gen}, motivated by the need to recover the interpolation between the Hausdorff and box dimensions when the intermediate dimensions are not continuous at $\theta = 0$, Banaji introduced the more general \emph{$\Phi$-intermediate dimensions}. These are defined by replacing the function $\delta^{1/\theta}$ in Definition~\ref{intdef} by a more general function $\Phi \colon (0,\Delta) \to \mathbb{R}$ that is monotonic and satisfies $0<\Phi(\delta) \leq \delta$ for all $\delta \in (0,\Delta)$, and $\Phi(\delta)/\delta \to 0$ as $\delta \to 0^+$; such functions are said to be \emph{admissible}. 
In this paper we will work with admissible functions that satisfy the addition mild condition that $\Phi(\delta)/\delta \to 0$ monotonically as $\delta \to 0^+$ (which is satisfied by many reasonable functions such as $\delta^{1/\theta}$ and $e^{-\delta^{-0.5}}$), and we call such functions \emph{monotonically admissible}. 

 We will use one further notion of dimension: the \emph{Assouad dimension} of any non-empty $F \subseteq \Rd$ is defined by 
\begin{multline*}
      \dim_\mathrm{A} F = \inf\left\{ \, 
      \alpha : \mbox{ there exists }C>0\mbox{ such that for all } x \in F \mbox{ and } 
    \right. \\* 
    \qquad \left. 0<r<R, \mbox{ we have } N_r(B(x,R)\cap F) \leq C(R/r)^\alpha \, \right\}. 
    \end{multline*}

\subsection{Infinite iterated function systems and pressure functions}

We will work with infinite iterated function systems, defined as in~\cite{Mauldin1996} as follows. 

\begin{defn}\label{iifs}
Let $d \in \N$ and let $X$ be a compact, connected subset of $\Rd$ with more than one point, %
equipped with the metric induced by the Euclidean norm $||\cdot ||$. We say that an \emph{infinite iterated function system (IIFS)} on $X$ is a collection of maps $S_i \colon X \to X$, $i \in I$, where $I$ is a countable index set, such that there exists $\rho \in [0,1)$ such that 
\[ ||S_i(x) - S_i(y)|| \leq \rho ||x-y|| \quad \mbox{for all } x,y \in X \mbox{ and } i \in I.\]
\end{defn}

The assumption that the maps are uniformly contracting will be important when defining the limit set. We now introduce some notation. Define $I_0 \coloneqq \{\varnothing\}$ and $I^* \coloneqq \bigcup_{i=1}^\infty I^n$. We call elements of $I^*$ \emph{finite words} and elements of $I^\N$ \emph{infinite words}. We usually denote words by the letter $w$, and we write $w = i_1 \cdots i_n$ and $w=i_1i_2\ldots$ instead of $w= (i_1,\ldots,i_n)$ and $w=(i_1,i_2,\ldots)$ respectively. We say that a word in $I^n$ has \emph{length} $n$, and an infinite word has \emph{length} $\infty$. If $w \in I^* \cup I^\N$ and $n \in \N$ does not exceed the length of $w$ then we write $w|_n \coloneqq w_1 \ldots w_n \in I^n$, and $w|_0 \coloneqq \varnothing$. If $w \in I_0 \cup I^* \cup I^\N$ and $v \in I_0 \cup I^*$ then we say that $v$ is a \emph{prefix} of $w$ if there exists $n \in \{0,1,2,\ldots\}$ such that $v = w|_n$. 
For $w \in I^n$ we define 
\[ S_w \coloneqq S_{w_1} \circ \cdots \circ S_{w_n},\]  
and we define $S_\varnothing$ to be the identity function on $X$.

Mauldin and Urbański~\cite{Mauldin1996,Mauldin1999} study the IIFSs in Definition~\ref{cifs}, where the contractions are assumed to extend to \emph{conformal} maps (see~\eqref{conformal} below for a formal definition), which means that locally they preserve angles. This assumption is crucial for Mauldin and Urbański's formulae for the Hausdorff and box dimensions of the limit set, and they will be crucial when we prove a formula for the intermediate dimensions in Section~\ref{conformalsect}. In one dimension, conformal maps are simply functions with non-vanishing H{\"o}lder continuous derivative. In two dimensions, they are holomorphic functions with non-vanishing derivative on their domain. In dimension three and higher, by a theorem of Liouville (1850) they have a very restricted form: they are M{\"o}bius transformations, so can be composed from homotheties, isometries, reflections in hyperplanes, and inversions in spheres. 

\begin{defn}\label{cifs}%
A \emph{conformal iterated function system (CIFS)} is an IIFS (as in Definition~\ref{iifs}) which satisfies the following additional properties: 
\begin{enumerate}

\item\label{osc} (Open set condition (OSC)) 
The set $X$ has non-empty interior $U \coloneqq \mathrm{Int}_{\mathbb{R}^d} X$, and $S_i(U) \subset U$ for all $i \in I$ and $S_i(U) \cap S_j(U) = \varnothing$ for all $i,j \in I$ with $i \neq j$. 

\item\label{cone} (Cone condition) $\inf_{x \in X} \inf_{r \in (0,1)} \mathcal{L}^d (B(x,r) \cap \mathrm{Int}_{\Rd} X)/r^d > 0$. 

\item\label{conformal} (Conformality) There exists an open, bounded, connected subset $V \subset \Rd$ such that $X \subset V$ and such that for each $i \in I$, $S_i$ extends to a $C^{1+\epsilon}$ diffeomorphism from $V$ to an open subset of $V$ which is \emph{conformal}, so for all $x \in V$ the differential $S_i'|_x$ exists, is non-zero, is a similarity map (so $||S_i'|_x (y)|| = ||S_i'|_x||\cdot||y||$ for all $y \in \Rd$), and is $\epsilon$-H{\"o}lder continuous in $x$. Moreover, there exists $\rho \in (0,1)$ such that for all $i \in I$ we have $||S_i'|| <\rho$, where $||\cdot||$ is the supremum norm over $V$. %

\item\label{bdp} (Bounded distortion property (BDP)) There exists $K>0$ such that for all $x,y \in V$ and $w \in I^*$ we have $||S_w'|_y|| \leq K||S_w'|_x||$. 

\end{enumerate}
\end{defn}
Mauldin and Urbański~\cite[(2.8)]{Mauldin1996} use a stronger form of the cone condition, but note on page~110 that~\eqref{cone} is sufficiently strong for their aims. Both forms of this technical condition will be satisfied if $X$ is `reasonable,' for example if $X$ is convex or has a smooth enough boundary. 

For any IIFS, since $|S_{w|_n}(X)| \leq \rho^n |X|$ by the uniform contractivity, the map 
\[ \pi \colon I^\N \to X, \qquad \pi(w) \coloneqq \bigcap_{n=1}^\infty S_{w|_n}(X) \]
is well-defined and continuous. 
We are interested in the following set, which will often be fractal in nature. 
\begin{defn}
The \emph{limit set} or \emph{attractor} of an IIFS is defined by
\[ F \coloneqq \pi(I^\N) = \bigcup_{w \in I^\N} \bigcap_{n=1}^\infty S_{w|_n}(X). \]
\end{defn}
For $w \in I^n$ define $F_w = F_{S_w} \coloneqq S_w(F)$ and $X_w = X_{S_w} \coloneqq S_w(X)$. %
Now, $F$ is clearly non-empty and satisfies the relation 
\begin{equation}\label{attractor} F = \bigcup_{i \in I} F_i. 
\end{equation}
It is the largest (by inclusion) of many sets which satisfy~\eqref{attractor}. If $I$ is finite then $F$ is compact (and is indeed the only non-empty compact set which satisfies~\eqref{attractor} by Hutchinson's application of the Banach contraction mapping theorem~\cite{Hutchinson1981}), but if $I$ is infinite then $F$ will not generally be closed. When $I$ is finite, the limit set $F$ equals the closure of the set of fixed points of all finite compositions of maps in the IFS, and also satisfies $F = \cap_{n=1}^\infty S^n(X)$ (where $S(E) \coloneqq \cup_{i \in I} S_i(E)$ for $E \subseteq X$), but when $I$ is infinite these sets may strictly contain $F$. 

We define some more quantities that will enable us to define a topological pressure function for the system. For any IIFS, for $w \in I_n$ define 
\[r_w = r_{S_w} \coloneqq \inf_{x,y \in X, x \neq y} \frac{||S_w(x)-S_w(y)||}{||x-y||};\]
\[ R_w = R_{S_w} \coloneqq \sup_{x,y \in X, x \neq y} \frac{||S_w(x)-S_w(y)||}{||x-y||},\]
noting that $0 \leq r_w \leq R_w \leq \rho$. The value $R_w$ is the smallest possible Lipschitz constant for $S_w$, and these constants are clearly submultiplicative; for any $v,w \in I^*$, we have $R_{vw} \leq R_v R_w$. 
For $n \in \N$ define $M_n \coloneqq \{ \, S_w : w \in I^n \, \}$, and for $t \in [0,\infty)$ define 
\[ \phi_n(t) \coloneqq \sum_{\sigma \in M_n} R_\sigma^t \in [0,\infty].\]
 Note that as in~\cite{Ngai2016}, we sum over $\sigma \in M_n$ instead of $w \in I^n$ so that distinct words $w$ that give rise to the same $S_w$ contribute only one term in the sum (exact overlaps are removed). 
\begin{lma}\label{fekete}
For any IIFS, for all $t \in (0,\infty)$, using the convention $\log \infty = \infty$ and $\log 0 = -\infty$, 
\[ \frac{1}{n}\log \phi_n(t) \xrightarrow[n \to \infty]{} \inf_{n \in \N} \frac{1}{n}\log \phi_n(t) \in [-\infty,\infty]. \]
\end{lma}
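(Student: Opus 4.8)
The plan is to recognise the statement as an instance of Fekete's subadditivity lemma applied to the sequence $a_n \coloneqq \log \phi_n(t)$. The only content beyond invoking that lemma is to verify subadditivity, $a_{m+n} \le a_m + a_n$, equivalently the submultiplicativity
\[ \phi_{m+n}(t) \le \phi_m(t)\,\phi_n(t) \qquad (m,n \in \N). \]
Granting this, the convergence $\tfrac1n a_n \to \inf_n \tfrac1n a_n$ is the usual argument: for fixed $k$, writing $n = q_n k + r_n$ with $r_n \in \{1,\dots,k\}$ and iterating subadditivity gives $a_n \le q_n a_k + a_{r_n}$, so $\limsup_n \tfrac1n a_n \le \tfrac1k a_k$; taking the infimum over $k$ and comparing with the trivial inequality $\liminf_n \tfrac1n a_n \ge \inf_k \tfrac1k a_k$ gives the claim.

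The substance is therefore the submultiplicativity estimate, and the one delicate point is that $\phi_n(t)$ is a sum over the \emph{set} of distinct maps $M_n = \{S_w : w \in I^n\}$ rather than over words $w \in I^n$ (so that exact overlaps are counted once). Fix $m,n \in \N$. For each $\sigma \in M_{m+n}$ choose, using a fixed well-ordering of $I$, a word $w(\sigma) \in I^{m+n}$ with $S_{w(\sigma)} = \sigma$, and split it as $w(\sigma) = u(\sigma)\,v(\sigma)$ with $u(\sigma) = w(\sigma)|_m \in I^m$ and $v(\sigma) \in I^n$. This defines a map $\Psi \colon M_{m+n} \to M_m \times M_n$, $\Psi(\sigma) \coloneqq (S_{u(\sigma)}, S_{v(\sigma)})$. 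Since $\sigma = S_{u(\sigma) v(\sigma)} = S_{u(\sigma)} \circ S_{v(\sigma)}$, the pair $(S_{u(\sigma)}, S_{v(\sigma)})$ determines $\sigma$, so $\Psi$ is injective; and submultiplicativity of the best Lipschitz constants gives $R_\sigma = R_{u(\sigma) v(\sigma)} \le R_{u(\sigma)} R_{v(\sigma)}$, hence $R_\sigma^t \le R_{u(\sigma)}^t R_{v(\sigma)}^t$ as $t>0$. Summing over $\sigma \in M_{m+n}$ and using injectivity of $\Psi$ together with non-negativity of every term,
\[ \phi_{m+n}(t) = \sum_{\sigma \in M_{m+n}} R_\sigma^t \le \sum_{\sigma \in M_{m+n}} R_{u(\sigma)}^t\, R_{v(\sigma)}^t \le \sum_{\tau_1 \in M_m}\sum_{\tau_2 \in M_n} R_{\tau_1}^t\, R_{\tau_2}^t = \phi_m(t)\,\phi_n(t), \]
the final equality holding in $[0,\infty]$ by Tonelli's theorem. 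This is the required estimate, and it is the only non-routine ingredient of the proof.

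It remains to check that the conventions $\log 0 = -\infty$ and $\log\infty = \infty$ cause no difficulty, i.e.\ the cases $\phi_n(t) \in \{0,\infty\}$. If $\phi_N(t) = 0$ for some $N$, then (as $t>0$) every $\sigma \in M_N$ satisfies $R_\sigma = 0$ and hence is a constant map; since every element of $M_n$ with $n \ge N$ factors as $S_{w|_N}\circ S_{w_{N+1}\cdots w_n}$ with $S_{w|_N} \in M_N$ constant, we get $\phi_n(t) = 0$ for all $n \ge N$, so the limit and the infimum both equal $-\infty$ and there is nothing to prove. Otherwise $\phi_n(t) > 0$ for all $n$, so $a_n \in (-\infty,\infty]$ and the estimate above makes $(a_n)_{n\in\N}$ a subadditive sequence, to which Fekete's subadditivity lemma applies as described. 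I expect no real obstacle here: the entire argument rests on the submultiplicativity estimate, and the infinite‑value bookkeeping is straightforward.
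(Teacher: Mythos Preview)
Your proof is correct and follows essentially the same approach as the paper: establish submultiplicativity $\phi_{m+n}(t) \le \phi_m(t)\phi_n(t)$ via the submultiplicativity of Lipschitz constants, then invoke Fekete's lemma. You are in fact more careful than the paper about the point that $\phi_n$ sums over the set $M_n$ of distinct maps rather than over words (your injectivity argument for $\Psi$), and about the degenerate cases $\phi_n(t)\in\{0,\infty\}$; the paper leaves both of these implicit.
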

\begin{proof}
By the submultiplicativity of the Lipschitz constants, if $n,m \in \N$ then 
\[ \log \phi_{n+m} \leq \log \left(\sum_{\sigma \in M_n} \sum_{\tau \in M_m} (R_\sigma R_\tau)^t\right) = \log \left(\sum_{\sigma \in M_n}R_\sigma^t \sum_{\tau \in M_m} R_\tau^t\right) = \log \phi_n + \log \phi_m. \]
Therefore the sequence $(\log \phi_n)_{n=1}^\infty$ is subadditive, so the claim follows from Fekete's lemma. %
\end{proof}
In light of Lemma~\ref{fekete} we can make the following definition, which will later be used when giving bounds and formulae for the different notions of dimension for the limit sets. 
\begin{defn}
For any IIFS, define the \emph{(topological) pressure function} $\overline{P} \colon (0,\infty) \to [-\infty,\infty]$ by 
\[ \overline{P}(t) \coloneqq \lim_{n \to \infty} \frac{1}{n}\log \phi_n(t) = \inf_{n \in \N} \frac{1}{n}\log \phi_n(t).\]
\end{defn}

\begin{lma}\label{pressurestrict}
For any IIFS, $\overline{P}$ is a decreasing function, and if $0<t<s<\infty$ and $\overline{P}(t) \in \mathbb{R}$ then $\overline{P}(t)>\overline{P}(s)$. 
\end{lma}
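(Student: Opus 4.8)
The plan is to leverage the uniform contraction of the generators, and in particular the bound $R_\sigma \le \rho^n$ valid for every $\sigma \in M_n$; this follows by induction from the submultiplicativity $R_{vw}\le R_v R_w$ together with $R_i \le \rho$ for each $i \in I$ (immediate from Definition~\ref{iifs}). Note first that if $\rho = 0$ then every $R_\sigma$ vanishes, so $\phi_n(t)=0$ for all $n$ and $t>0$ and hence $P \equiv -\infty$; in that case the hypothesis $P(t)\in\mathbb{R}$ is never met and there is nothing to prove, so I may assume $\rho \in (0,1)$ throughout.

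For the monotonicity statement I would argue termwise: since $0 \le R_\sigma \le \rho < 1$, whenever $t \le s$ we have $R_\sigma^{\,s} \le R_\sigma^{\,t}$ (both equal to $0$ if $R_\sigma = 0$), so $\phi_n(s) \le \phi_n(t)$ in $[0,\infty]$ for every $n$. Dividing by $n$, taking logarithms with the conventions $\log 0 = -\infty$ and $\log\infty=\infty$, and letting $n \to \infty$ via Lemma~\ref{fekete} gives $P(s) \le P(t)$, so $P$ is decreasing.

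For the strict inequality, fix $0 < t < s$. The key estimate is that for every $n \in \N$,
\[ \phi_n(s) \;=\; \sum_{\sigma \in M_n} R_\sigma^{\,t}\,R_\sigma^{\,s-t} \;\le\; \rho^{\,n(s-t)} \sum_{\sigma \in M_n} R_\sigma^{\,t} \;=\; \rho^{\,n(s-t)}\,\phi_n(t), \]
where we used $R_\sigma \le \rho^n$ and $s-t>0$ (terms with $R_\sigma = 0$ contribute $0$ on both sides, and the bound is trivially true when $\phi_n(t)=\infty$). Taking $\tfrac1n\log$ of both sides yields $\tfrac1n\log\phi_n(s) \le (s-t)\log\rho + \tfrac1n\log\phi_n(t)$, and since $(s-t)\log\rho$ does not depend on $n$, letting $n\to\infty$ (again by Lemma~\ref{fekete}, so that both sides converge) gives $P(s) \le (s-t)\log\rho + P(t)$. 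When $P(t)\in\mathbb{R}$, the number $(s-t)\log\rho$ is a strictly negative real, so $(s-t)\log\rho + P(t) < P(t)$ and therefore $P(s) < P(t)$, that is, $P(t)>P(s)$.

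I do not expect a genuine obstacle, but the point deserving most care is the exponent $n$ in $R_\sigma \le \rho^n$: it is this decay with $n$ (obtained by composing $n$ maps, rather than the cruder $R_\sigma \le \rho$) that makes the term $(s-t)\log\rho$ survive in the limit as a fixed negative constant instead of disappearing like $\tfrac{(s-t)\log\rho}{n}$, and hence it is precisely what upgrades the inequality to a strict one. The only remaining bookkeeping is the handling of the values $\pm\infty$ through the stated logarithm conventions, together with the observation that finiteness of $P(t)$ is exactly what is needed for strictness: if $P(t)$ were $-\infty$ or $+\infty$ the displayed bound would still hold but would not be strict.
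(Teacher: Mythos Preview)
Your proof is correct and follows essentially the same route as the paper: both arguments use $R_\sigma \le \rho^n$ for $\sigma \in M_n$ to obtain $\phi_n(s) \le \rho^{n(s-t)}\phi_n(t)$, take $\tfrac{1}{n}\log$ and pass to the limit to get $P(s) \le (s-t)\log\rho + P(t) < P(t)$ when $P(t)\in\mathbb{R}$. Your version is slightly more careful about the degenerate cases $\rho = 0$ and $R_\sigma = 0$ and about the $\pm\infty$ conventions, but there is no substantive difference in approach.
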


\begin{proof}
For all $n \in \N$, for all $w \in I^n$, $R_w \leq \rho^n < 1$, so $\phi_n$ is a decreasing function. Therefore $\overline{P}$ is a decreasing function. If $0<t<s<\infty$ and $\overline{P}(t) \in \mathbb{R}$ then for all $n \in \N$ we have $\phi_n(s) \leq \rho^{(s-t)n} \phi_n(t)$ so $\frac{1}{n} \log \phi_n(s) \leq (s-t) \log \rho + \frac{1}{n} \log \phi_n(t)$ so $\overline{P}(s) \leq (s-t) \log \rho + \overline{P}(t) < \overline{P}(t)$, as required. 
\end{proof}

\begin{defn}\label{d:thetaandh}
Throughout this paper, using the convention that $\inf \varnothing = \infty$, we define the \emph{finiteness parameter} of the system 
\[ \theta_S \coloneqq \inf\{ \, t > 0 : \overline{P}(t) < \infty \, \} \in [0, \infty], \]
and the quantity 
\[ h \coloneqq \inf\{ \, t > 0 : \overline{P}(t) < 0\, \} \in [0, \infty]. \]
\end{defn}
We use the letter $h$ because we will see that it is related to the Hausdorff dimension of the limit set. 
For all $n \in \N$ and $t \in [0,\infty)$ we have 
\[ \phi_n(t) \leq \sum_{w \in I^n}R_w^t \leq \sum_{i_1\cdots i_n \in I^n} \prod_{k=1}^n R_{i_k}^t.\]
Therefore if $\phi_n(t)$ is replaced by either $\sum_{w \in I^n}R_w^t$ or $\sum_{i_1\cdots i_n \in I^n} \prod_{k=1}^n R_{i_k}^t$ in the definition of the pressure function then the resulting functions would overestimate $\overline{P}(t)$. Thus the infimal values of $t>0$ for which these new functions are negative provide upper bounds for $h$. These may be easier to compute than $h$ itself. For all $n \in \N$, the function $\frac{1}{n} \log \phi_n(t)$ can also be used to give an upper bound, which will be very good when $n$ is large, by Lemma~\ref{fekete}.

We now establish several geometric facts that hold for any CIFS and will be important when proving a formula for the intermediate dimensions of the limit set in Section~\ref{conformalsect}. The following geometric property that holds for any CIFS was proved in~\cite[page~111]{Mauldin1996}, recalling that $K$ is the constant from the bounded distortion principle. 
\begin{align*} 
(\textup{BDP.3}) \quad S_w(B(x,r)) \supseteq B(S_w(x),&K^{-1}||S_w'||r) \\*
&\textup{for all } x \in X, r \in (0,\mathrm{dist}(X,\partial V)], w \in I^*.
\end{align*}

The following lemma says that the Lipschitz constants are comparable to the norm of the derivatives of the corresponding map. 
\begin{lma}\label{diameterslemma}
For any CIFS there exists $D \geq 1$ such that for all $w \in I^*$ we have 
\[ D^{-1} ||S_w'|| \leq r_w \leq R_w \leq D||S_w'||. \]

\end{lma}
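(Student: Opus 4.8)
The plan is to deduce everything from the differentiability and conformality hypotheses in Definition~\ref{cifs}\eqref{conformal}, specifically the fact that each $S_i$ extends to a $C^{1+\epsilon}$ conformal diffeomorphism on the open set $V \supset X$, that $X$ is compact and connected with $\operatorname{dist}(X,\partial V) > 0$, and the bounded distortion property \eqref{bdp}. The key observation is that for a conformal map, $S_w'|_x$ is a similarity, so $||S_w'|_x||$ is exactly the local scaling ratio at $x$, and the quantities $r_w, R_w$ are global (sup/inf over $X$) versions of this local ratio. So the content of the lemma is that the local scaling ratio does not vary too much over $X$, uniformly in $w$ --- which is precisely a bounded-distortion-type statement.

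First I would prove the upper bound $R_w \le D\,||S_w'||$. Fix $w \in I^*$ and $x,y \in X$; since $X$ is connected and contained in the open set $V$, and since $S_w$ extends to a $C^1$ map on $V$, I would like to apply the mean value inequality along a path in $X$ from $x$ to $y$. The subtlety is that $X$ need not be convex, so the straight segment from $x$ to $y$ may leave $X$ (or even $V$). I would handle this by working in a slightly enlarged compact connected set: either use that $X$ is connected and compact to find, for a fixed small $\eta \le \operatorname{dist}(X,\partial V)$, a path inside the $\eta$-neighbourhood $\mathcal{S}_\eta(X) \subset V$, or more cleanly note that the standard CIFS literature (Mauldin--Urbański) establishes $||S_w(x) - S_w(y)|| \le C\, ||S_w'|_x||\, ||x - y||$ via a chaining argument along a path; integrating $S_w'$ along such a path of bounded length and using that $||S_w'|_z||$ is comparable to $||S_w'|_x||$ by \eqref{bdp} gives $||S_w(x) - S_w(y)|| \le K\,||S_w'|_x||\,\ell$ where $\ell$ bounds the path length, hence $R_w \le D\,||S_w'||$ after taking sup over $x$ (and noting $||S_w'|| = \sup_{x \in X}||S_w'|_x||$, so we can absorb constants). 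Taking the supremum norm of $S_w'$ over $X$ versus the value at a point only costs a factor controlled by $K$ from \eqref{bdp}, so I can be cavalier about which point I evaluate the derivative at.

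Next, the lower bound $D^{-1}||S_w'|| \le r_w$. For this I would use (BDP.3), which is quoted in the excerpt just before the lemma: $S_w(B(x,r)) \supseteq B(S_w(x), K^{-1}||S_w'||\,r)$ for all $x \in X$ and $r \in (0,\operatorname{dist}(X,\partial V)]$. Taking $x, y \in X$ distinct: if $||x - y|| \le \operatorname{dist}(X,\partial V)$, set $r = ||x-y||$; since $y \notin B(x,r)$ is false here --- rather, I want the injectivity/openness. Actually the cleanest route: $S_w$ is a diffeomorphism on $V$, hence injective, so $S_w(y) \notin S_w(B(x,r))$ whenever $y \notin B(x,r)$; taking $r$ slightly less than $||x-y||$ and using (BDP.3) gives $||S_w(x) - S_w(y)|| \ge K^{-1}||S_w'||\,r$, and letting $r \uparrow ||x-y||$ yields $||S_w(x) - S_w(y)|| \ge K^{-1}||S_w'||\,||x-y||$. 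This handles pairs with $||x-y|| \le \operatorname{dist}(X,\partial V)$. For pairs with $||x - y|| > \operatorname{dist}(X,\partial V)$ (which can only happen finitely "many times" in the sense that $||x-y|| \le |X|$ always), I would cover $X$ by finitely many balls of radius $\operatorname{dist}(X,\partial V)/3$ (compactness), connect $x$ to $y$ through a bounded number of such balls using connectedness of $X$, and chain the local estimates; this introduces only a fixed multiplicative constant depending on $X$ and $V$. Then $r_w \ge D^{-1}||S_w'||$ with $D$ depending only on $K$, $X$, $V$.

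The middle inequality $r_w \le R_w$ is immediate from the definitions (infimum $\le$ supremum), and $D \ge 1$ can be arranged by enlarging $D$. The main obstacle, and the only place requiring genuine care, is the non-convexity of $X$: all the "mean value" and "ball inclusion" estimates are naturally local, valid for points closer than $\operatorname{dist}(X,\partial V)$, and must be globalised via a compactness-plus-connectedness chaining argument whose constant depends on the geometry of $X$ inside $V$ but crucially \emph{not} on $w$. Once one is comfortable that such a chaining constant exists (this is essentially the same device Mauldin and Urbański use, and is why $X$ is assumed compact and connected), the proof is a short assembly of (BDP.3), the bounded distortion property \eqref{bdp}, and the mean value inequality.
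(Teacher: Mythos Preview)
Your proposal follows the same overall strategy as the paper: the upper bound via the mean value inequality combined with a finite-chain argument using that $X$ is compact and connected, and the lower bound via (BDP.3) together with injectivity of $S_w$. For the upper bound, and for the lower bound when $||x-y|| \le \operatorname{dist}(X,\partial V)$, your argument matches the paper's essentially line for line.

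There is, however, a genuine gap in your treatment of the lower bound when $||x-y|| > \operatorname{dist}(X,\partial V)$. You propose to ``chain the local estimates'' through a finite sequence of nearby intermediate points, but chaining does not work for lower bounds: if $x = z_0, z_1, \dots, z_n = y$ with each consecutive pair close, the inequalities $||S_w(z_j) - S_w(z_{j+1})|| \ge c\,||z_j - z_{j+1}||$ give no control whatsoever on $||S_w(x) - S_w(y)||$ from below, because the image chain could fold back on itself and the triangle inequality points the wrong way. (Chaining works for the \emph{upper} bound precisely because the triangle inequality is in your favour there.) The paper sidesteps this with a one-line direct argument: since $y \notin B(x, \operatorname{dist}(X,\partial V))$ and $S_w$ is injective on $V$, (BDP.3) gives
\[
||S_w(x)-S_w(y)|| \ge K^{-1}||S_w'||\,\operatorname{dist}(X,\partial V),
\]
and then one simply uses $||x-y|| \le |X|$ to convert this fixed lower bound into a bound of the form $||S_w(x)-S_w(y)|| \ge D^{-1}||S_w'||\,||x-y||$ with $D \ge K|X|/\operatorname{dist}(X,\partial V)$. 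So no chaining is needed for the far-apart case at all.
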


\begin{proof}

The proof is similar to the proofs of some of the consequences of the bounded distortion principle in~\cite[page~110-111]{Mauldin1996}. %
For the upper bound, note that since $X$ is compact and disjoint from the closed set $\Rd\setminus V$, we have $\mathrm{dist}(X,\Rd\setminus V) > 0$. Let $w \in I^*$. If $B$ is a ball of radius at most $\mathrm{dist}(X,\Rd\setminus V)$ centred at a point in $X$ and $x,y \in B$ then by the mean value inequality $||S_w(x)-S_w(y)|| \leq ||S_w'|| \cdot ||x-y||$. Since $X$ is compact and connected, it can be covered by a finite chain of open balls $B_1,\ldots,B_q$ centred at points in $X$ and with radii at most $\mathrm{dist}(X,\Rd\setminus V)/(2K)$ (chain in the sense that $B_i \cap B_{i+1} \neq \varnothing$ for $i=1,2,\ldots,q-1$). 
Suppose 
\[ D \geq \max\left\{q, K, \frac{K|X|}{\mathrm{dist}(X,\Rd \setminus V)}\right\}.\]
Then since $D \geq q$, the upper bound $R_w \leq D||S_w'||$ holds. 

Trivially we have $r_w \leq R_w$. For the lower bound, if $x,y \in X$ and $||x-y|| \leq \mathrm{dist}(X,\Rd \setminus V)$ then by (BDP.3) and the bijectivity of $S_w$, 
 \[||S_w(x) - S_w(y)|| \geq K^{-1}||S_w'||\cdot ||x-y|| \geq D^{-1}||S_w'||\cdot||x-y||.\]
  If $x,y \in X$ and $|X| \geq ||x-y|| > \mathrm{dist}(X,\Rd \setminus V)$ then since 
  \[ S_w(B(x,\mathrm{dist}(X,\Rd \setminus V))) \supseteq B(S_w(x),K^{-1}||S_w'||\mathrm{dist}(X,\Rd \setminus V)),\]
   we have 
 \begin{align*} 
 ||S_w(x) - S_w(y)|| \geq K^{-1}||S_w'||\mathrm{dist}(X,\Rd \setminus V) &\geq K^{-1}||S_w'||\mathrm{dist}(X,\Rd \setminus V) ||x-y||\cdot |X|^{-1} \\*
 &\geq D^{-1}||S_w'||\cdot ||x-y||.
 \end{align*}
  Therefore the lower bound $r_w \geq D^{-1} ||S_w'||$ holds, as required. 
\end{proof}

Lemma~\ref{coneimplication} essentially says that the cone condition holds not just for the set $X$ itself but also for its images under the conformal map corresponding to any finite word. The purpose of Lemma~\ref{coneimplication} is to prove Lemma~\ref{fitin}. 
\begin{lma}\label{coneimplication}
For any CIFS with $D$ as in Lemma~\ref{diameterslemma}, 
\[ \inf_{w \in I^*} \inf_{x \in S_w(X)} \inf_{r \in (0,D||S_w'||)} r^{-d} \cdot \mathcal{L}^d (B(x,r) \cap S_w(\mathrm{Int}_{\Rd} X)) > 0. \]
\end{lma}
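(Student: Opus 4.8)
The plan is to reduce Lemma~\ref{coneimplication} to the cone condition \eqref{cone} for the set $X$ itself, transferring it through the conformal map $S_w$ and using the near-isometry (up to the scaling factor $\|S_w'\|$) provided by the bounded distortion property. Fix $w \in I^*$, a point $x \in S_w(X)$, and a radius $r \in (0, D\|S_w'\|)$. Write $x = S_w(p)$ with $p \in X$. The idea is that $S_w$ maps a ball around $p$ of radius roughly $r/\|S_w'\|$ onto something sandwiched between balls around $x$ of radii comparable to $r$, and that the Jacobian of $S_w$ on that small ball is comparable to $\|S_w'\|^d$ by bounded distortion; combining these with the cone condition at $p$ controls $\mathcal{L}^d(B(x,r) \cap S_w(\mathrm{Int}_{\Rd} X))$ from below by a uniform multiple of $r^d$.

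Concretely, I would proceed as follows. First, set $s \coloneqq r / (D\|S_w'\|) \in (0,1)$, so that by the cone condition \eqref{cone} there is a constant $c_0 > 0$, \emph{independent of everything}, with $\mathcal{L}^d(B(p,s) \cap \mathrm{Int}_{\Rd} X) \ge c_0 s^d$. Second, I want to push this forward: since conformal maps with bounded distortion are bi-Lipschitz on $X$ with the two-sided estimate $D^{-1}\|S_w'\| \le r_w \le R_w \le D\|S_w'\|$ from Lemma~\ref{diameterslemma}, we get $S_w(B(p,s)\cap X) \subseteq B(x, R_w s) \subseteq B(x, D\|S_w'\| s) = B(x,r)$, so $S_w(B(p,s)\cap \mathrm{Int}_{\Rd} X) \subseteq B(x,r) \cap S_w(\mathrm{Int}_{\Rd} X)$ (here one also uses that $S_w$ is a diffeomorphism of $V$ so it sends interior to interior). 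Third, estimate the Lebesgue measure of the left-hand side from below using the change of variables formula: $\mathcal{L}^d(S_w(E)) = \int_E |\det S_w'|_y| \, d\mathcal{L}^d(y)$ for $E = B(p,s) \cap \mathrm{Int}_{\Rd} X$, and since $S_w'|_y$ is a similarity of ratio $\|S_w'|_y\|$ we have $|\det S_w'|_y| = \|S_w'|_y\|^d \ge (r_w)^d \ge (D^{-1}\|S_w'\|)^d$ (using that $r_w$ is the infimal local scaling and, via Lemma~\ref{diameterslemma} applied at scale of single derivatives, or directly via the mean value argument, $\|S_w'|_y\| \ge D^{-1}\|S_w'\|$ on $X$). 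Putting the three steps together:
\[
\mathcal{L}^d\bigl(B(x,r) \cap S_w(\mathrm{Int}_{\Rd} X)\bigr) \ge (D^{-1}\|S_w'\|)^d \, \mathcal{L}^d(B(p,s)\cap \mathrm{Int}_{\Rd} X) \ge (D^{-1}\|S_w'\|)^d \, c_0 s^d = \frac{c_0}{D^{2d}}\, r^d,
\]
and since $c_0/D^{2d}$ does not depend on $w$, $x$ or $r$, the claimed infimum is bounded below by this positive constant.

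The main obstacle, and the point requiring care, is the third step: justifying that $\|S_w'|_y\|$ (equivalently $|\det S_w'|_y|^{1/d}$) is bounded below by a uniform constant times $\|S_w'\| = \sup_{V}\|S_w'\|$ on all of $X$, not merely pointwise. This is exactly the content of the bounded distortion property \eqref{bdp} combined with Lemma~\ref{diameterslemma}: the BDP gives $\|S_w'|_y\| \ge K^{-1}\|S_w'|_x\|$ for $x,y\in X$, hence $\|S_w'|_y\| \ge K^{-1}\sup_{x\in X}\|S_w'|_x\|$, and one must check that $\sup_{x\in X}\|S_w'|_x\|$ is itself comparable to $\|S_w'\|=\sup_{x\in V}\|S_w'|_x\|$ — which it is, again by the bounded distortion estimate extended to $V$ (or simply by noting that the lower bound $r_w \ge D^{-1}\|S_w'\|$ of Lemma~\ref{diameterslemma} already packages precisely this comparison, since $r_w \le \|S_w'|_y\|$ for $y \in X$ by the mean value inequality applied on small balls as in the proof of that lemma). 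A secondary technical point is ensuring $S_w(\mathrm{Int}_{\Rd}X) = \mathrm{Int}_{\Rd}(S_w X)$ up to the measure-zero boundary, which follows since $S_w$ restricted to $V$ is a diffeomorphism onto an open subset of $V$ and hence an open map; this is what lets the cone condition at $p$ transfer cleanly to the image.
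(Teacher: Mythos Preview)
Your argument is correct and more direct than the paper's. Both proofs start identically: write $x = S_w(p)$, set $s = r/(D||S_w'||) \in (0,1)$, invoke the cone condition to get $\mathcal{L}^d(B(p,s) \cap U) \gtrsim s^d$, and use $R_w \le D||S_w'||$ to obtain $S_w(B(p,s) \cap U) \subseteq B(x,r) \cap S_w(U)$. The divergence is in how the measure is pushed forward. You apply the change-of-variables formula with Jacobian $|\det S_w'|_y| = ||S_w'|_y||^d \ge (D^{-1}||S_w'||)^d$; your key inequality $||S_w'|_y|| \ge r_w$ follows simply from differentiability of $S_w$ at $y$ together with the fact that $S_w'|_y$ is a similarity (not from the mean value inequality as you write --- a harmless slip). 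The paper instead never invokes the Jacobian: it approximates $B(p,s) \cap U$ from inside by a compact set, packs it with disjoint small balls via a dyadic-cube decomposition, and then applies the purely metric inclusion (BDP.3), namely $S_w(B(y,\rho)) \supseteq B(S_w(y), K^{-1}||S_w'||\rho)$, to each small ball, producing disjoint balls inside $B(x,r) \cap S_w(U)$ whose volumes it sums. Your route is shorter and makes full use of the $C^1$ structure on $V$; the paper's is more combinatorial and would survive in settings where only a ball-expansion property like (BDP.3) is available in place of a smooth change of variables.
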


\begin{proof}

Write $U=\mathrm{Int}_{\Rd} X$. Let $n \in \N$ and let $w \in I^n$. The idea is that given any ball centred on $S_w(X)$ whose diameter is not too large we can find a large enough ball centred on $X$ that is mapped into it under $S_w$, a uniform proportion of which intersects $U$ by the cone condition. The measure of the image of this part under $S_w$ is large enough by conformality and the BDP. 

 Consider an arbitrary point in $S_w(X)$, which we can write as $S_w(x)$ for some $x \in X$, and let $r \in (0,D||S_w'||)$. 
 By the cone condition there exists $c>0$ such that for all $x \in X$ and $r \in (0,1)$ we have $\mathcal{L}^d (B(x,r) \cap U)/r^d > c$. By the upper bound of Lemma~\ref{diameterslemma}, $S_w(B(x,r/(D||S_w'||))) \subseteq B(S_w(x),r)$. Now, $\mathcal{L}^d (B(x,r/(D||S_w'||)) \cap U) r^{-d} D^d||S_w'||^d > c$, so by the inner regularity of the Lebesgue measure, there exists a compact $C \subset B(x,r/(D||S_w'||)) \cap U$ such that $\mathcal{L}^d(C) > cr^dD^{-d}||S_w'||^{-d}$. Since $C$ is compact and disjoint from the closed set $\Rd \setminus (B(x,r/(D||S_w'||)) \cap U)$, we have $\mathrm{dist}(C,\Rd \setminus (B(x,r/(D||S_w'||)) \cap U)) > 0$. Let $n \in \N$ be large enough so that 
\[2^{-n} < \min\{\mathrm{dist}(C,\Rd \setminus (B(x,r/(D||S_w'||)) \cap U))/\sqrt{d},\mathrm{dist}(X,\Rd\setminus V)\}.\] 
Let $c_d \in (0,1)$ be the ratio of the $d$-dimensional Lebesgue measure of a $d$-dimensional ball to the $d$-dimensional Lebesgue measure of the smallest $d$-dimensional hypercube which contains it. %
Then the balls of diameter $2^{-n}$ inside each of the dyadic cubes of sidelength $2^{-n}$ which intersect $C$ form a disjoint collection of balls inside $B(x,r/(D||S_w'||)) \cap U$ whose total Lebesgue measure is greater than $cc_dr^dD^{-d}||S_w'||^{-d}$. By~\cite[(BDP.3)]{Mauldin1996}, the image of each of these balls under $S_w$ contains a ball of radius $K^{-1}||S_w'||2^{-(n+1)}$. These balls are disjoint subsets of $B(S_w(x),r) \cap S_w(U)$ whose total Lebesgue measure is greater than $c c_d r^dD^{-d}||S_w'||^{-d}||S_w'||^{d}K^{-d} = c c_d r^dD^{-d}K^{-d}$. 
Therefore 
\[ \inf_{w \in I^*} \inf_{x \in S_w(X)} \inf_{r \in (0,D||S_w'||)} \mathcal{L}^d (B(x,r) \cap S_w(\mathrm{Int}_{\Rd} X))/r^d \geq c c_d D^{-d}K^{-d} > 0, \]
as required. 
\end{proof}

Lemma~\ref{fitin} says that it is impossible for too many cylinder sets that are larger than a given size to cluster together and intersect a region that is smaller than that size. This will be useful when proving facts about dimensions, as it shows that a set of a given size cannot cover another set which intersects too many cylinders that are larger than that given size. 
Lemma~\ref{fitin} and its proof are an application of Lemma~\ref{coneimplication} and the OSC, and are similar to (2.2) in~\cite[Proposition~2.9]{Mauldin1999}. The use of the cone condition to obtain Lemma~\ref{fitin} is similar to~\cite[Theorem~4.9]{Graf1988}. 

\begin{lma}\label{fitin}%
For any CIFS there exists $M \in \N$ such that for all $z \in \Rd$ and $r>0$, if $w_1,\ldots,w_l$ are distinct words in $I^*$ such that for all $i,j \in \{1,\ldots,l\}$, $w_i$ is not a prefix of $w_j$, and for all $i \in \{1,\ldots,l\}$ we have $B(z,r) \cap S_{w_i}(X) \neq \varnothing$ and $|S_{w_i}(X)| \geq r/2$, then $l \leq M$. 
\end{lma}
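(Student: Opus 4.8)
The plan is to use a volume-packing argument based on Lemma~\ref{coneimplication} and the open set condition. First I would fix $z \in \Rd$, $r > 0$, and distinct words $w_1, \dots, w_l$ with no $w_i$ a prefix of $w_j$, such that $B(z,r) \cap S_{w_i}(X) \neq \varnothing$ and $|S_{w_i}(X)| \geq r/2$ for each $i$. Since no $w_i$ is a prefix of another, the OSC (Definition~\ref{cifs}\eqref{osc}) guarantees that the interiors $S_{w_i}(U)$ are pairwise disjoint, where $U = \mathrm{Int}_{\Rd} X$: indeed, if $v$ is the longest common prefix of $w_i$ and $w_j$ then $w_i = v a \cdots$ and $w_j = v b \cdots$ with $a \neq b$, so $S_{w_i}(U) \subseteq S_{va}(U)$ and $S_{w_j}(U) \subseteq S_{vb}(U)$, and these are disjoint since $S_{va}(U) \cap S_{vb}(U) \subseteq S_v(S_a(U) \cap S_b(U)) = \varnothing$.

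Next I would choose a single ambient ball that contains all the relevant pieces. Since $|S_{w_i}(X)| \geq r/2$ and $|S_{w_i}(X)| \leq R_{w_i}|X| \leq D\|S_{w_i}'\| \, |X|$ by Lemma~\ref{diameterslemma}, each $\|S_{w_i}'\|$ is bounded below by a uniform constant multiple of $r$; also $|S_{w_i}(X)| \leq R_{w_i}|X| \leq \rho \cdot |X|$ is comparable to $\|S_{w_i}'\|$ from above, so in fact $\|S_{w_i}'\| \asymp r$ with constants depending only on the CIFS (in particular one can arrange $r \in (0, D\|S_{w_i}'\|)$ so that Lemma~\ref{coneimplication} applies at scale $r$ centred on $S_{w_i}(X)$). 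Pick any point $x_i \in B(z,r) \cap S_{w_i}(X)$. Then $S_{w_i}(X) \subseteq B(x_i, |S_{w_i}(X)|) \subseteq B(z, r + |S_{w_i}(X)|) \subseteq B(z, Cr)$ for a uniform constant $C$ (using $|S_{w_i}(X)| \leq \rho|X|$, and noting we may also bound $|S_{w_i}(X)|$ by a constant times $r$ via $\|S_{w_i}'\| \asymp r$). Hence all the sets $S_{w_i}(U)$ lie inside the single ball $B(z, Cr)$, which has Lebesgue measure a constant multiple of $r^d$.

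Finally I would extract a definite volume from each piece and sum. By Lemma~\ref{coneimplication} applied with the ball $B(x_i, r)$ centred at $x_i \in S_{w_i}(X)$, and using $r \leq D\|S_{w_i}'\|$,
\[
\mathcal{L}^d\big(B(x_i, r) \cap S_{w_i}(U)\big) \geq c_0 \, r^d
\]
for a constant $c_0 > 0$ depending only on the CIFS, where $c_0 = c c_d D^{-d} K^{-d}$ from the conclusion of Lemma~\ref{coneimplication}. Since $B(x_i,r) \subseteq B(z, 2r) \subseteq B(z, Cr)$, each set $S_{w_i}(U)$ contributes at least $c_0 r^d$ of Lebesgue measure inside $B(z, Cr)$, and these contributions are disjoint by the OSC argument above. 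Therefore
\[
l \cdot c_0 \, r^d \leq \sum_{i=1}^l \mathcal{L}^d\big(B(x_i, r) \cap S_{w_i}(U)\big) \leq \mathcal{L}^d\big(B(z, Cr)\big) = \omega_d (Cr)^d,
\]
where $\omega_d = \mathcal{L}^d(B(0,1))$, giving $l \leq \omega_d C^d / c_0 =: M$, which depends only on the CIFS.

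The main obstacle I anticipate is bookkeeping the various comparison constants correctly: one must verify that $\|S_{w_i}'\|$ is comparable to $r$ from both sides (so that Lemma~\ref{coneimplication} applies with the ball of radius $r$ — which requires $r < D\|S_{w_i}'\|$) and that all the cylinder images genuinely fit inside one ball of radius a uniform multiple of $r$. Both follow from $|S_{w_i}(X)| \geq r/2$ together with Lemma~\ref{diameterslemma} and the uniform contraction bound $R_{w_i} \leq \rho$, but care is needed because the hypothesis only gives a lower bound on $|S_{w_i}(X)|$; the upper bound comes for free from $|S_{w_i}(X)| \leq R_{w_i}|X| \leq \rho|X|$, and relating this to $r$ requires noting that the $w_i$ with $|S_{w_i}(X)|$ much larger than $r$ number at most a bounded amount anyway (or, more cleanly, one simply replaces $r$ by $|S_{w_i}(X)|$ in the volume estimate when needed and absorbs the factor). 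The disjointness via the OSC is the other point requiring a short but genuine argument, as sketched above.
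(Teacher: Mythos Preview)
Your approach is the same volume-packing argument as the paper's, using Lemma~\ref{coneimplication} together with the OSC; your final paragraph is essentially the paper's proof (the paper uses balls of radius $r/2$ rather than $r$, which lands cleanly in the range $(0,D\|S_{w_i}'\|)$ required by Lemma~\ref{coneimplication}).

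One correction: your middle paragraph is both unjustified and unnecessary. You do \emph{not} have $\|S_{w_i}'\|\asymp r$ or $|S_{w_i}(X)|\lesssim r$: the hypothesis only gives the lower bound $|S_{w_i}(X)|\geq r/2$, and a cylinder $S_{w_i}(X)$ may well have diameter of order $1$ while $r$ is tiny, so $S_{w_i}(X)\subseteq B(z,Cr)$ can fail. Fortunately you never use it. The disjoint sets you actually pack are $B(x_i,r)\cap S_{w_i}(U)$, and these sit inside $B(z,2r)$ simply because $x_i\in B(z,r)$ --- exactly as you write in your final paragraph. Delete the middle paragraph and the obstacle discussion about upper bounds on $|S_{w_i}(X)|$, switch the radius to $r/2$, and the proof is clean. (Your explicit OSC disjointness argument via longest common prefix is a nice addition; the paper just asserts this.)
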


\begin{proof}
Write $U=\mathrm{Int}_{\Rd} X$, and let $k_d$ be the $d$-dimensional Lebesgue measure of a ball in $\Rd$ of unit radius. 
By Lemma~\ref{coneimplication} there exists $c>0$ %
such that 
\[ \inf_{w \in I^*} \inf_{x \in S_w(X)} \inf_{r \in (0,D||S_w'||\cdot |X|)} \mathcal{L}^d (B(x,r) \cap S_w(U))/r^d > c.\] %
For each $i=1,\ldots,l$ there exists $x_i \in X$ such that $S_{w_i}(x_i) \in B(z,r)$. By the upper bound from Lemma~\ref{diameterslemma}, $r/2 \leq |S_{w_i}(X)| \leq D||S_w'||\cdot |X|$, so $\mathcal{L}^d(B(S_{w_i}(x_i),r/2) \cap S_w(U))2^d r^{-d} > c$. Since no $w_i$ is a prefix of any $w_j$, by the OSC the $l$ sets $B(S_{w_i}(x_i),r/2) \cap S_w(U)$ are disjoint subsets of $B(z,2r)$, each having $d$-dimensional Lebesgue measure at least $c r^d 2^{-d}$. Therefore $l c r^d 2^{-d} \leq \mathcal{L}^d(B(z,2r)) = k_d 2^d r^d$, so if we let $M \coloneqq k_d 2^{2d} c^{-1}$ then $l \leq M$, as required. 
\end{proof}

Lemma~\ref{fitin} shows in particular that for any CIFS, for all $n \in \N$ we have 
\[ \# \{ \, w \in I^n : B(z,r) \cap S_w(X) \neq \varnothing \textup{ and } |S_w(X)| \geq r/2 \, \} \leq M,\] 
so the family $\{ \, S_w(X) : w \in I^n \, \}$ is \emph{pointwise finite} in the sense that each element of $X$ belongs to at most finitely many elements of this family. Therefore the limit set satisfies 
\[ F = \bigcap_{n=1}^\infty \bigcup_{w \in I^n} S_w(X), \]
and so is a Borel subset of $X$ in the class $F_{\sigma \delta}$. Mauldin and Urbański noted this in~\cite[(2.5)]{Mauldin1996} and also showed that the limit set need not be in the class $G_\delta$. 
Lemma~\ref{diameterslemma} says that the Lipschitz constants are comparable to the norm of the derivative of the corresponding map; Lemma~\ref{2nddiams} uses this to show that the sizes of the corresponding cylinder sets are also comparable. 
\begin{lma}\label{2nddiams}
For any CIFS there exists $D \geq 1$ such that for all $w \in I^*$ we have 
\[ D^{-1} ||S_w'|| \leq |F_w| \leq |S_w(X)| \leq D||S_w'||. \]
\end{lma}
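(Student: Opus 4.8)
The plan is to combine Lemma~\ref{diameterslemma}, which already gives $D^{-1}\|S_w'\| \le r_w \le R_w \le D\|S_w'\|$ for a suitable constant $D \ge 1$ depending only on the CIFS, with elementary geometric observations relating $r_w$, $R_w$, $|F_w|$ and $|S_w(X)|$. The key point is that $R_w$ is, by definition, the smallest Lipschitz constant of $S_w$ on $X$, so $|S_w(X)| \le R_w |X|$; conversely, since $X$ has more than one point and is connected, a lower bound for $|S_w(X)|$ in terms of $r_w$ (hence $\|S_w'\|$) must be extracted. Enlarging the constant $D$ from Lemma~\ref{diameterslemma} by a factor depending on $|X|$ and on the diameter of a fixed nondegenerate segment inside $X$ will absorb all the multiplicative losses.

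The steps I would carry out, in order, are as follows. First, $|F_w| = |S_w(F)| \le |S_w(X)|$ is immediate from $F \subseteq X$ and monotonicity of diameter, so the middle inequality is free. Second, for the upper bound $|S_w(X)| \le D\|S_w'\|$: for any $x,y \in X$ we have $\|S_w(x)-S_w(y)\| \le R_w\|x-y\| \le R_w|X|$, whence $|S_w(X)| \le R_w |X| \le D\|S_w'\| \cdot |X|$ using the upper bound of Lemma~\ref{diameterslemma}; then replace $D$ by $D|X|$ (note $|X| \ge $ some positive constant, but we only need the upper bound direction here, so multiplying $D$ up is harmless). Third, for the lower bound $D^{-1}\|S_w'\| \le |F_w|$: fix two distinct points $a,b$ in $F$ (which is non-empty; if $F$ happened to be a single point the CIFS would be degenerate, but in any case one can instead work with $X$, which has more than one point, and note $F$ is dense in... — more safely, pick $a \ne b$ with $a,b \in X$, use that $\pi$ surjects onto $F$, and observe $|F_w| \ge \|S_w(a)-S_w(b)\|$ only if $a,b \in F$). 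To avoid this subtlety I would instead argue directly: by (BDP.3), $S_w(B(x,r)) \supseteq B(S_w(x), K^{-1}\|S_w'\| r)$ for $r \in (0,\mathrm{dist}(X,\partial V)]$, so $S_w(X) \supseteq S_w(B(x,r)\cap X) \supseteq B(S_w(x),K^{-1}\|S_w'\|r) \cap S_w(X)$, and using the cone condition (or Lemma~\ref{coneimplication}) this image set has positive $d$-dimensional measure comparable to $(\|S_w'\|r)^d$, forcing $|S_w(X)| \ge$ const $\cdot \|S_w'\|$; but $|F_w|$ rather than $|S_w(X)|$ is wanted, so one applies the same reasoning with $F$ in place of $X$, using that $F \cap U$ has the same cone-type lower density along cylinder sets — this is exactly the content needed. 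Alternatively, and more cleanly, I would simply cite that $r_w \le |F_w|/c$ for the constant $c$ coming from the cone condition applied to $F$ (since $F$ contains a cone-like chunk of each $X_w$), combined with $r_w \ge D^{-1}\|S_w'\|$.

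The main obstacle is the lower bound $D^{-1}\|S_w'\| \le |F_w|$: the limit set $F$ is generally neither closed nor "fat", so one cannot naively say $|F_w| \ge r_w \cdot |F|$ the way one does for $X$. The resolution is that $F$ does contain, inside each $X_w$, a copy of $F$ itself scaled by roughly $\|S_w'\|$ (namely $F_w = S_w(F)$), so the problem reduces to showing $|F| > 0$, i.e.\ that $F$ is not a single point — and if $F$ \emph{were} a single point then $\dim_\mathrm{H} F = 0$ and the lemma's lower bound can be arranged trivially by taking $D$ large, or one excludes this degenerate case. Once $|F| =: \eta > 0$ is fixed, the bounded distortion property gives $|F_w| = |S_w(F)| \ge r_w |F| \ge D^{-1}\eta \|S_w'\|$ directly from the definition of $r_w$, and absorbing $\eta^{-1}$ into $D$ completes the argument. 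So the real work is just bookkeeping the constants and handling the degenerate case; everything substantive is already packaged in Lemma~\ref{diameterslemma} and the cone condition.
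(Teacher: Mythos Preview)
Your overall strategy is exactly the paper's: reduce everything to Lemma~\ref{diameterslemma} and the single fact that $|F|>0$, then absorb $|X|$ and $|F|^{-1}$ into the constant. The middle inequality and the upper bound are fine, and your final clean argument for the lower bound, namely $|F_w|=|S_w(F)|\ge r_w|F|\ge D^{-1}|F|\,\|S_w'\|$, is precisely what the paper uses.

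There is, however, a genuine gap in your treatment of the ``degenerate'' case. You write that if $F$ were a single point then ``the lemma's lower bound can be arranged trivially by taking $D$ large''. This is false: if $|F|=0$ then $|F_w|=0$ for every $w$, while $\|S_w'\|>0$ since $S_w$ is a conformal diffeomorphism, so no finite $D$ can make $D^{-1}\|S_w'\|\le 0$ hold. You therefore \emph{must} show $|F|>0$; it cannot be sidestepped. The paper does this in one line by invoking Lemma~\ref{fitin}: that lemma implies the family $\{S_i(X)\}_{i\in I}$ is pointwise finite, so since $I$ is infinite no single point can lie in every $S_i(X)$; but if $F=\{p\}$ then $S_i(p)=p$ for all $i$, forcing $p\in S_i(X)$ for all $i$, a contradiction. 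Once you plug in this argument, your proof is complete and coincides with the paper's. The earlier detours through (BDP.3) and the cone condition are unnecessary.
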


\begin{proof}

Lemma~\ref{fitin} shows in particular that the family $\{ \, S_i : i \in I \, \}$ is pointwise finite, so since $I$ is infinite, $F$ has positive diameter. Therefore the result follows from Lemma~\ref{diameterslemma} if we increase $D$ as required. 
\end{proof}

For any CIFS, for each $n \in \N$ define $\psi_n \colon [0,\infty) \to \mathbb{R}$ by 
\[ \psi_n(t) = \sum_{w \in I^n} ||S_w'||^t.\] Mauldin and Urbański~\cite[Section~3]{Mauldin1996} define the pressure function by 
\begin{equation}\label{MUpressure}
\lim_{n \to \infty}\frac{1}{n}\log \psi_n(t),
\end{equation} 
 showing that this limit always exists in $[-\infty,\infty]$ and proving many properties about this function. Lemma~\ref{pressureequal} shows that this coincides with our definition for the pressure function $\overline{P}(t)$, and is in particular independent of the open set $V$.

\begin{lma}\label{pressureequal}
For any CIFS, for all $t \in (0,\infty)$, 
\[ \frac{1}{n}\log \psi_n(t) \to \overline{P}(t) \textup{ as } n \to \infty. \]
\end{lma}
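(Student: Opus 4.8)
The plan is to show that $\frac1n\log\psi_n(t)$ and $\frac1n\log\phi_n(t)$ converge to the same limit by sandwiching the two quantities against each other using the comparability of the Lipschitz constants $R_w$ and the derivative norms $\|S_w'\|$ from Lemma~\ref{diameterslemma}. The one genuine subtlety is that $\phi_n(t)$ sums over $\sigma\in M_n$ (distinct maps, exact overlaps removed) whereas $\psi_n(t)$ sums over all words $w\in I^n$, so the two sums are a priori over different index sets; I need to confirm this discrepancy does not affect the exponential growth rate.

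First I would record the elementary two-sided inequality coming from Lemma~\ref{diameterslemma}: there is $D\ge1$ with $D^{-1}\|S_w'\|\le R_w\le D\|S_w'\|$ for every $w\in I^*$. Raising to the power $t>0$ and summing over $w\in I^n$ gives
\[
D^{-t}\,\psi_n(t)\;\le\;\sum_{w\in I^n}R_w^t\;\le\;D^{t}\,\psi_n(t).
\]
Taking $\frac1n\log(\cdot)$ and letting $n\to\infty$, the factors $D^{\pm t}$ contribute $O(1/n)\to0$, so $\frac1n\log\sum_{w\in I^n}R_w^t$ converges (in $[-\infty,\infty]$) to the Mauldin--Urbański pressure $\lim_n\frac1n\log\psi_n(t)$.

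It remains to replace $\sum_{w\in I^n}R_w^t$ by $\phi_n(t)=\sum_{\sigma\in M_n}R_\sigma^t$. Clearly $\phi_n(t)\le\sum_{w\in I^n}R_w^t$ since $M_n$ is the image of $I^n$ under $w\mapsto S_w$ and each $R_\sigma$ is counted once on the left and at least once on the right. For the reverse direction I would argue that each map $\sigma\in M_n$ arises from only boundedly many words: indeed, if $S_w=S_v$ for distinct $w,v\in I^n$ then in particular $S_w(X)=S_v(X)$, and by Lemma~\ref{fitin} (applied with $z$ any point of $S_w(X)$ and $r$ comparable to $|S_w(X)|$) at most $M$ words in $I^n$ can have a common cylinder of that size meeting that ball — so the fibre of $w\mapsto S_w$ has size at most $M$. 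Hence $\sum_{w\in I^n}R_w^t\le M\,\phi_n(t)$, and combining,
\[
\phi_n(t)\;\le\;\sum_{w\in I^n}R_w^t\;\le\;M\,\phi_n(t).
\]
Again the constant $M$ washes out under $\frac1n\log(\cdot)$ as $n\to\infty$, so $\frac1n\log\phi_n(t)\to\lim_n\frac1n\log\psi_n(t)$; by the definition of $P(t)$ (Lemma~\ref{fekete}) the left side converges to $P(t)$, and we are done.

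The main obstacle is the middle step — controlling how many words collapse to the same map — and here one must be slightly careful about exactly which hypothesis of Lemma~\ref{fitin} to invoke: the words in a fibre $\{w\in I^n: S_w=\sigma\}$ all have the same length $n$, so none is a prefix of another, and they all share the cylinder $S_\sigma(X)$, so the lemma applies directly with, say, $r=2|S_\sigma(X)|$ and $z\in S_\sigma(X)$, yielding the uniform bound $M$ independent of $n$ and $\sigma$. (If one preferred to avoid Lemma~\ref{fitin} here, an alternative is to note that the inequality $\phi_n(t)\le\sum_{w\in I^n}R_w^t\le\sum_{i_1\cdots i_n}\prod_k R_{i_k}^t$ already displayed in the excerpt, together with submultiplicativity, shows all three candidate pressure functions have the same growth rate when it is finite — but the cleanest self-contained route is the fibre-size bound above.)
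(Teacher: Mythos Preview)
Your proof is correct, but the ``genuine subtlety'' you flag is in fact no subtlety at all in the CIFS setting, and the paper's proof is accordingly shorter. The paper simply observes that the Open Set Condition (item~\eqref{osc} of Definition~\ref{cifs}) forces distinct words $w,v\in I^n$ to give distinct maps $S_w\neq S_v$: if $w\neq v$ and $k$ is the first index where they differ, then $S_{w_k}(U)\cap S_{v_k}(U)=\varnothing$, whence $S_w(U)$ and $S_v(U)$ are disjoint non-empty open sets, so $S_w\neq S_v$. Thus $\phi_n(t)=\sum_{w\in I^n}R_w^t$ on the nose, and the proof is just your first paragraph.

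Your detour through Lemma~\ref{fitin} to bound fibre sizes is valid and gives the same conclusion, and would be the right move in a setting where exact overlaps were permitted but Lemma~\ref{fitin} still held --- but for a CIFS as defined here the OSC is baked in, so the fibre-size argument is superfluous.
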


\begin{proof}
The OSC means that there are no exact overlaps, so $\phi_n(t) = \sum_{w \in I^n} R_w^t$ for all $n \in \N$ and $t \in (0,\infty)$. Therefore by Lemma~\ref{diameterslemma},
\[ D^{-t}\phi_n(t) \leq \psi_n(t) \leq D^t \phi_n(t). \]
Taking logarithms and dividing through by $n$ gives 
\[-\frac{1}{n}t\log D + \frac{1}{n}\log \phi_n(t) \leq \frac{1}{n}\log \psi_n(t) \leq \frac{1}{n}t\log D + \log \phi_n(t),\]
and the result follows upon taking the limit $n \to \infty$. 
\end{proof} 

One of the properties proved in~\cite[Section~3]{Mauldin1996} for a CIFS is that the finiteness parameter for the pressure function is the same as the value at which each of the finite-level approximations to the pressure function becomes finite: $\theta_S = \inf\{ \, t > 0 : \psi_n(t) < \infty \, \}$ for all $n \in \N$. %
Note that for a CIFS where each of the maps $S_i$ are similarities, which means that there exists $c_i \in (0,1)$ such that for all $x,y \in X$ we have $||S_i(x)-S_i(y)|| = c_i||x-y||$, the quantity $h$ from Definition~\ref{d:thetaandh} satisfies the simple form  
\begin{equation}\label{hausdorffsimilarity} h = \inf\left\{ \, t > 0 : \sum_{i \in I} c_i^t < 1 \, \right\}.
\end{equation}

\section{Results: dimensions of infinitely generated attractors}\label{mainsect}

\subsection{General upper bounds}

 In Theorem~\ref{inttypeub} we will provide general upper bounds for the Hausdorff, box, intermediate and $\Phi$-intermediate dimensions of the limit set of an arbitrary IIFS. We will use the following technical lemma about the $\Phi$-intermediate dimensions. 
 \begin{lma}\label{phiinvertible}
 If $\Phi \colon (0,\Delta] \to \mathbb{R}$ is monotonically admissible then there exists an \emph{invertible} monotonically admissible function $\Phi_1 \colon (0,\Delta] \to (0,\Phi(\Delta)]$ such that for all non-empty, bounded subsets $F \subset \Rd$ we have $\upd F = \overline{\dim}^{\Phi_1} F$ and $\lpd F = \underline{\dim}^{\Phi_1} F$. 
 \end{lma}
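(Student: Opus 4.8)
The plan is to "repair" a given monotonically admissible $\Phi$ so that it becomes strictly monotonic (hence invertible) without changing the resulting $\Phi$-intermediate dimensions. The only obstruction to invertibility is that $\Phi$ may be constant on some subintervals, or may fail to be continuous; I will perturb it on a scale that is negligible for the dimension computation. Concretely, the idea is to set $\Phi_1(\delta) \coloneqq \tfrac12\bigl(\Phi(\delta) + g(\delta)\bigr)$ for a suitable auxiliary strictly increasing function $g$ with $g(\delta)/\delta \to 0$ monotonically, chosen small relative to $\Phi$; for instance $g(\delta) = \delta \cdot \min\{\Phi(\delta)/\delta,\ \text{something continuous and strictly increasing in }\delta\}$, or more simply one can take a strictly increasing continuous minorant of $\Phi$ that is still admissible. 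One must check: (i) $0 < \Phi_1(\delta) \le \delta$ on $(0,\Delta]$; (ii) $\Phi_1$ is monotonic (increasing) and, being strictly increasing and continuous, invertible onto $(0,\Phi_1(\Delta)] = (0,\Phi(\Delta)]$ after rescaling the codomain if necessary; (iii) $\Phi_1(\delta)/\delta \to 0$ monotonically; and (iv) $\Phi_1$ is comparable to $\Phi$ in the sense needed for the dimensions to agree.

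For step (iv) — the substantive point — I would use the standard robustness of ($\Phi$-)intermediate dimensions under admissible functions that are "close" in a multiplicative/logarithmic sense. The key observation is that $\Phi_1 \le \Phi$ pointwise (so covers allowed for $\Phi_1$ are a subset of those for $\Phi$, giving one inequality for free: $\upd F \le \overline{\dim}^{\Phi_1} F$ once one checks $\Phi_1$ is admissible, because shrinking the lower endpoint only makes the covering constraint harder, hence the dimension can only increase — wait, I need to be careful about the direction). More robustly, I would invoke a continuity/stability lemma for $\Phi$-intermediate dimensions (of the type established in Banaji's \cite{Banaji2020}): if $\Phi, \Phi_1$ are admissible and $\log\Phi_1(\delta)/\log\Phi(\delta) \to 1$, or more simply if there is a constant $C$ with $\Phi(\delta)^{C} \le \Phi_1(\delta) \le \Phi(\delta)$ for small $\delta$, then the $\Phi$- and $\Phi_1$-intermediate dimensions coincide. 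Choosing $g$ (equivalently $\Phi_1$) so that $\Phi_1(\delta) \ge \Phi(\delta)^{1+o(1)}$ as $\delta \to 0^+$ — which is easy since we only shrink $\Phi$ by a sub-polynomial factor — then yields $\upd F = \overline{\dim}^{\Phi_1} F$ and likewise for the lower versions, for every non-empty bounded $F \subset \Rd$.

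The main obstacle I anticipate is purely bookkeeping: constructing $\Phi_1$ explicitly so that it is simultaneously (a) strictly increasing and continuous (hence a bijection onto an interval $(0,\Phi(\Delta)]$), (b) monotonically admissible (the condition $\Phi_1(\delta)/\delta \to 0$ \emph{monotonically} is the fiddly one, since adding or averaging two monotone functions need not preserve monotonicity of the \emph{ratio} to $\delta$), and (c) close enough to $\Phi$ to preserve the dimensions. I expect this is handled by defining $\Phi_1$ via $\Phi_1(\delta) = \delta \cdot h(\delta)$ where $h(\delta)$ is a continuous, strictly increasing function with $h(\delta) \to 0$ as $\delta \to 0^+$ and $h(\delta) \le \Phi(\delta)/\delta$, obtained by smoothing/minorising the monotone function $\delta \mapsto \Phi(\delta)/\delta$ (e.g. an "integrated" lower envelope like $h(\delta) = \delta^{-1}\int_0^\delta \bigl(\Phi(u)/u\bigr)\,\frac{du}{u} \big/ \log$-normalisation, or something even more elementary); then $\Phi_1/\delta = h$ is manifestly monotone, $\Phi_1 \le \Phi$, and $\Phi_1$ differs from $\Phi$ by at most a bounded power, so the stability lemma applies. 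I would then record that restricting the codomain to $(0,\Phi(\Delta)]$ (rather than $(0,\Phi_1(\Delta)]$) costs nothing since $\Phi_1(\Delta) \le \Phi(\Delta)$ and we may linearly reparametrise, or simply observe the stated codomain is an upper bound for the range, which is all that is needed.
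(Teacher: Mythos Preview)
Your proposal sketches the right intuition but has genuine gaps and one incorrect claim.

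The incorrect claim: the stability criterion ``$\Phi(\delta)^C \leq \Phi_1(\delta) \leq \Phi(\delta)$ for a fixed $C>1$ implies equal $\Phi$-dimensions'' is false. Take $\Phi(\delta) = \delta^{1/\theta}$ and $\Phi_1 = \Phi^2 = \delta^{2/\theta}$; these give the $\theta$- and $(\theta/2)$-intermediate dimensions respectively, which differ in general. You also mention the correct criterion $\log\Phi_1/\log\Phi \to 1$, but you never settle on which one you are using, and you never verify it for any concrete $\Phi_1$.

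The gap: you give no actual construction of $\Phi_1$. The suggestions (``average with some $g$'', ``an integrated lower envelope'', ``something even more elementary'') are placeholders, and you yourself flag that preserving monotonicity of $\Phi_1(\delta)/\delta$ is ``the fiddly one'' without resolving it. The codomain issue at the end is also left unresolved: a minorant will have $\Phi_1(\Delta) < \Phi(\Delta)$ in general, so the range is not $(0,\Phi(\Delta)]$ as claimed.

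The paper's proof avoids all of this with a one-line construction: set $\Phi_1(\Delta/2^n) \coloneqq \Phi(\Delta/2^n)$ and interpolate linearly on each dyadic interval $[\Delta/2^{n+1},\Delta/2^n]$. Because $\Phi(\delta)/\delta$ is non-increasing, one gets $\Phi(\Delta/2^{n+1}) \leq \tfrac12\Phi(\Delta/2^n)$, which forces the interpolant to be strictly increasing (hence invertible onto $(0,\Phi(\Delta)]$) and forces $\Phi_1(\delta)/\delta$ to be monotone on each piece and at the nodes. The comparison one obtains is multiplicative in the \emph{argument}, namely $\Phi(\delta/2) \leq \Phi_1(\delta) \leq \Phi(2\delta)$, and this is exactly the hypothesis of the stability result~\cite[Corollary~3.4.2(ii)]{Banaji2020}, which immediately gives $\overline{\dim}^\Phi = \overline{\dim}^{\Phi_1}$ and $\underline{\dim}^\Phi = \underline{\dim}^{\Phi_1}$. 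Note that this is a different kind of comparison from yours: rescaling the input by a bounded factor, not raising the output to a power.
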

 
 \begin{proof}
 Define $\Phi_1$ by $\Phi_1(\Delta/2^n) \coloneqq \Phi(\Delta/2^n)$ and $\Phi_1$ linear on $[\Delta/2^{n+1},\Delta/2^n]$ for $n=0,1,2,\ldots$. Then clearly $\Phi_1$ is invertible and satisfies $\Phi_1(\delta) \leq \delta$ for all $\delta \in (0,\Delta]$ and $\Phi_1(\delta)/\delta \searrow 0$ monotonically as $\delta \to 0^+$. Moreover $\Phi(\delta/2) \leq \Phi_1(\delta) \leq \Phi(2\delta)$ for all $\delta \in (0,\Delta/2]$, so by~\cite[Corollary~3.8~(ii)]{Banaji2023gen}, $\upd F = \overline{\dim}^{\Phi_1} F$ and $\lpd F = \underline{\dim}^{\Phi_1} F$ for all non-empty, bounded $F \subset \Rd$.
 \end{proof}
 
\begin{thm}\label{inttypeub}
For any IIFS with limit set $F$ and notation as above, 
\begin{enumerate}
\item\label{hausdorffub} 
$\dim_\mathrm{H} F \leq h$
\item\label{boxub} $\ubd F \leq \max\{h,\lim_{n \to \infty} \inf\{ \, \ubd P : P \subseteq X \textup{ and } P \cap S_w(X) \neq \varnothing \, \forall w \in I^n \, \} \}$
\item\label{intub} For all $\theta \in [0,1]$ we have 
\[ \uid F \leq \max\{h,\lim_{n \to \infty} \inf\{ \, \uid  P : P \subseteq X \textup{ and } P \cap S_w(X) \neq \varnothing \, \forall w \in I^n \, \} \}\]
\item\label{phiub} If $\Phi$ is monotonically admissible then 
\[ \upd F \leq \max\{h,\lim_{n \to \infty} \inf\{ \, \upd  P : P \subseteq X \textup{ and } P \cap S_w(X) \neq \varnothing \, \forall w \in I^n \, \} \}\]
\end{enumerate}

\end{thm}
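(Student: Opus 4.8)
The plan is to prove (1) directly and derive (2)--(4) from a single covering construction; the Hausdorff case is genuinely simpler, and the box case is the instance $\theta=1$ of the intermediate case. For (1): if $s>h$ then $P(s)<0$, so $\phi_n(s)\to0$, and since $F\subseteq\bigcup_{\sigma\in M_n}\sigma(X)$ is a countable cover with $\sum_{\sigma\in M_n}|\sigma(X)|^s\le|X|^s\phi_n(s)$, taking $n$ with $|X|^s\phi_n(s)\le\eps$ and invoking~\eqref{hausdorffdef} gives $\dim_\mathrm{H}F\le s$; let $s\downarrow h$. For (2)--(4), first note that $a_n:=\inf\{\,\uid P_n:P_n\subseteq X,\ P_n\cap S_w(X)\neq\varnothing\ \forall w\in I^n\,\}$ is non-decreasing --- a set meeting every generation-$(n{+}1)$ cylinder meets every generation-$n$ cylinder, since each of the latter contains one of the former --- so the right-hand side equals $\sup_n a_n$. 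Fix $s$ strictly above it and $\eps>0$. Since $s>h$ we have $P(s/\theta)\le P(s)<0$, so for $N$ large $\phi_N(s)$ and $\phi_N(s/\theta)$ are as small as we please (in particular finite, which forces $\#\{\sigma\in M_N:R_\sigma|X|>\delta\}<\infty$ for each $\delta$); and $s>a_N$, so fix one set $P=P_N\subseteq X$ meeting every generation-$N$ cylinder with $\uid P<s$, with threshold $\delta_0=\delta_0(P,\eps'')$ (from the definition of $\uid P<s$) for a small constant $\eps''$.

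To cover $F$ at a small scale $\delta$, start from $F$ and iterate the generation-$N$ subdivision. At a cylinder $S_w(X)$ with $|S_w(X)|>\delta$ (and $|w|$ a multiple of $N$), pass to its children $S_{w\tau}(X)$, $\tau\in M_N$, and sort them: diameter in $[\delta^{1/\theta},\delta]$ --- retain as a cover element; diameter $>\delta$ --- subdivide again; diameter $<\delta^{1/\theta}$ --- absorb. Each generation-$N$ block shrinks diameters by a factor $\le\rho^N<1$, so only $O(\log(1/\delta))$ rounds occur and the construction is finite (the case $\rho=0$, where $F$ is countable, is trivial). The absorbed children inside $S_w(X)$ all meet $S_w(P)$, via $S_w(p_\tau)\in S_{w\tau}(X)\cap S_w(P)$ with $p_\tau\in S_\tau(X)\cap P$, so their union lies in $\mathcal{S}_{\delta^{1/\theta}}(S_w(P))$. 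We cover $S_w(P)$ by sets $\{W^w_j\}$ with diameters in $[\delta^{1/\theta},\delta]$ and total $s$-cost $c_w$, and then cover each $\mathcal{S}_{\delta^{1/\theta}}(W^w_j)$, of diameter $\le3|W^w_j|$, by $A_{d,3}$ sets of diameter $|W^w_j|$; this handles the absorbed part at cost $\le A_{d,3}c_w$. If $\delta/R_w<\delta_0$, take $\{W^w_j\}$ to be the image under the $R_w$-Lipschitz map $S_w$ of a cost-$\le\eps''$ cover of $P$ at scale $\delta/R_w$, after enlarging its too-small sets up to diameter $\delta^{1/\theta}$; an elementary estimate --- essentially the content of~\eqref{doublingconst}, that enlarging finitely many sets of diameter $\ge\eta^{1/\theta}$ to diameter $\eta$ inflates the $s$-cost by a bounded factor --- yields $c_w\le(R_w^s+R_w^{s/\theta})\eps''$. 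If instead $\delta/R_w\ge\delta_0$, then $R_w\le\delta/\delta_0$ while $R_w>\delta/|X|$, so $|S_w(P)|$ is comparable to $\delta$ and we cover $\mathcal{S}_{\delta^{1/\theta}}(S_w(P))$ directly by boundedly many sets of diameter $\delta$, at cost a bounded multiple of $\delta^s$.

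Now tally, summing over the $O(\log(1/\delta))$ rounds. Retained cylinders at a given round lie in some $M_{kN}$, contributing at most $|X|^s\phi_{kN}(s)\le|X|^s\phi_N(s)^k$. The directly-covered (``coarse'') cylinders at a given round also lie in some $M_{kN}$ with each $R_w>\delta/|X|$, so their count is at most $|X|^s\delta^{-s}\phi_{kN}(s)$ and their total cost is a bounded multiple of $|X|^s\phi_{kN}(s)\le|X|^s\phi_N(s)^k$. The remaining absorbed parts cost at most $A_{d,3}\eps''(\phi_{kN}(s)+\phi_{kN}(s/\theta))\le A_{d,3}\eps''(\phi_N(s)^k+\phi_N(s/\theta)^k)$. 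Summing these geometric series, the cylinder contributions form a geometric series of ratio $\phi_N(s)<1$ with prefactor a multiple of $|X|^s\phi_N(s)$, hence are small because $\phi_N(s)$ was chosen small, and the absorbed contributions sum to a bounded multiple of $\eps''$, small because $\eps''$ was chosen small. Hence $F$ has a cover with diameters in $[\delta^{1/\theta},\delta]$ and $s$-cost $\le\eps$ for every small $\delta$, so $\uid F\le s$; letting $s$ decrease to the right-hand side proves (3). The case $\theta=1$ is (2) (equivalently, a slightly simpler variant phrased with $N_\delta$, in which every non-oversized cylinder is absorbed and one needs only a bounded, not small, total cost).

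Part (4) follows by the same argument after using Lemma~\ref{phiinvertible} to assume $\Phi$ invertible --- so that the window $[\Phi(\delta),\delta]$ carries a well-defined ``scale $\delta$'', which is what lets covers be transferred across the contractions --- with $\Phi$ in place of $\delta^{1/\theta}$, the monotonicity of $\Phi(\delta)/\delta$ supplying the bounded-ratio estimates and the H\"older transformation behaviour of $\upd$ from~\cite{Banaji2020} replacing the Lipschitz stability of $\uid$ used above. I expect the main obstacle to be the absence of bounded distortion: in a general IIFS, $r_w$ can be far below $R_w$ (even zero), so a single cylinder spreads the admissible window $[\delta^{1/\theta},\delta]$ over a much wider range of diameters, forcing the rescaled copies $S_w(P)$ appearing at every round to be re-fattened and re-subdivided back into a valid window with only bounded cost loss; moreover the fixed threshold $\delta_0$ from $\uid P<s$ does not adapt to the varying rescaled scale $\delta/R_w$, so one must split the cylinders into the ``fine'' ones (where rescaled covers of $P$ apply) and the ``coarse'' ones (covered directly, their cost absorbed into the same geometric series via $R_w>\delta/|X|$ and $\sum_{\sigma\in M_{kN}}R_\sigma^s\le\phi_N(s)^k$). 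Minor additional care is needed for $\theta$ near $0$ (where $s/\theta$ is large but still exceeds $h\ge\theta_S$, so $\phi_N(s/\theta)$ is eventually finite and tends to $0$), for exact overlaps (handled by working with $M_n$ rather than $I^n$ throughout), and for the endpoint $\theta=1$.
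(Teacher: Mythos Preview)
Your direct iterative construction --- subdivide in level-$N$ blocks repeatedly, retain mid-sized cylinders as cover elements, and cover the too-small (absorbed) children via pushed-forward covers of $P_N$ --- is a genuine alternative to the paper's argument, which is instead an \emph{induction on scale}: the paper fixes one level $q$ and, to cover $F$ at scale $\delta$, covers the small level-$q$ cylinders using a fattened cover of $P_q$ while covering each large cylinder $\sigma$ by $S_\sigma(\text{cover of }F\text{ at the larger scale }\delta/R_\sigma)$, the latter provided by the inductive hypothesis on $n$ where $\delta\in(\tfrac{1}{n+1},\tfrac1n]$. Unrolled, the paper's recursion also walks down the tree, but its self-referential form avoids your fine/coarse split (the induction base case $\delta\in(1/2,1]$ plays the role of your coarse regime) and avoids the exponent $s/\theta$ entirely. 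On that last point: your $R_w^{s/\theta}$ term is in fact unnecessary, since $|V_j|\ge(\delta/R_w)^{1/\theta}$ and $\theta\le1$ give $R_w|V_j|\ge R_w(\delta/R_w)^{1/\theta}\ge\delta^{1/\theta}$, so enlarging images to the small end of the window is free and $c_w\le R_w^s\eps''$ directly --- this is precisely the paper's key inequality~\eqref{intkeybound}.

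There is one genuine, if minor, gap in your bookkeeping. Your claim that the total cost can be made $\le\eps$ has a circularity: the ``boundedly many sets of diameter $\delta$'' in the coarse case involves a constant of order $A_{d,\,|X|/\delta_0}$, and $\delta_0=\delta_0(P_N,\eps'')$ depends on $N$ through the choice of $P_N$, so you cannot afterwards enlarge $N$ to make $\phi_N(s)$ small relative to that constant. The fix is painless: it suffices to show the cost is uniformly \emph{bounded} by some $B=B(s,N,\delta_0)$ for all small $\delta$, since then for any $s'>s$ the $s'$-cost is $\le B\delta^{s'-s}\to0$, giving $\uid F\le s'$ for all such $s'$. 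The paper's proof also only establishes a uniform bound $B$ rather than arbitrarily small cost, for exactly this reason.
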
%

Since $\dim_\mathrm{P} F \leq \ubd F$ always holds,~\eqref{boxub} also gives an upper bound for the packing dimension. 
Note that the above upper bounds hold even when there are overlaps of cylinders, and for contractions which are not differentiable and do not satisfy any bi-Lipschitz or bounded distortion condition. However, in some such cases $h$ can overestimate $\dim_\mathrm{H} F$ significantly, and may even be infinite. 

\begin{proof} 

All the bounds are trivial if $h=\infty$, so assume $h<\infty$.

\eqref{hausdorffub} The proof is similar to the proof of the first part of~\cite[Theorem~3.15]{Mauldin1996}. Let $s>h$. By Lemma~\ref{pressurestrict} and the definition of $h$, $\overline{P}(s)<0$. Therefore there exists $N \in \N$ such that for all $n \geq N$ we have $\frac{1}{n}\log \phi_n(s) < \overline{P}(s)/2$, so $\phi_n(s) < e^{n\overline{P}(s)/2}$. Therefore 
\[ \sum_{\sigma \in M_n} |\sigma(X)|^s \leq |X|^s \phi_n(s) <  |X|^s e^{n\overline{P}(s)/2} \xrightarrow[n \to \infty]{} 0.\]
 But $\{\, \sigma(X) : \sigma \in M_n \, \}$ forms a $|X|\rho^n$-cover of $F$, and $|X|\rho^n \to 0$ as $n \to \infty$, so this means that the $s$-dimensional Hausdorff measure of $F$ is 0. Thus $\dim_\mathrm{H} F \leq s$. Letting $s \to h^+$ gives $\dim_\mathrm{H} F \leq h$, as required. 

\eqref{boxub} follows from the case $\theta=1$ of~\eqref{intub}. 

\eqref{intub} The proof is motivated by the proof of~\cite[Lemma~2.8]{Mauldin1999}, which gives a result for the box dimension in the less general setting of a CIFS. 
We will consider $\delta \in \left(\frac{1}{n+1},\frac{1}{n}\right]$ and induct on $n$. The idea is that if we fix a large enough $q \in \N$, the level-$q$ cylinders with size $\lesssim \delta$ can be covered efficiently using a cover of a set $P$ corresponding to level~$q$, and the cylinders with size $\gtrsim \delta$ can be covered efficiently using images of efficient covers of $F$ with larger diameters that are assumed to exist by the inductive hypothesis, and the fact that $\overline{P}(s) < 0$ if $s > h$.  

By definition $\overline{\dim}_{\,0} = \dim_\mathrm{H}$, so since we can take $P$ to be a countable set (with Hausdorff dimension 0) for all $n \in \N$, the case $\theta = 0$ follows from~\eqref{hausdorffub}. Henceforth suppose $\theta \in (0,1]$. Let 
\[ s > \max\{h,\lim_{n \to \infty} \inf\{ \, \uid P : P \subseteq X \textup{ and } P \cap S_w(X) \neq \varnothing \, \forall w \in I^n \, \} \}. \]
Since $s>h$, we have $\overline{P}(s)<0$. Therefore there exists $Q \in \N$ such that for all $q \geq Q$ we have $\frac{1}{q}\log \phi_q(s) < \overline{P}(s)/2$. Fix $q \geq Q$ large enough such that $\phi_q(s) \leq 1/2$ and $\rho^q < 1/4$, so $R_w < 1/4$ for all words $w$ of length at least $q$. By the definition of $s$, increasing $q$ further if necessary, we may assume there exists a subset $P_q \subseteq X$ such that $P_q \cap S_w(X) \neq \varnothing$ for all $w \in I^q$, and $\uid P_q < s$. 
Therefore there exists $A>0$ such that for all $\delta \in (0,1]$ there exists a cover $\{V_j\}$ of $P_q$ such that $\delta \leq |V_j| \leq \delta^\theta$ for all $j$, and $\sum_j |V_j|^s \leq A$. 
Let $A_{d,1+2|X|}$ be as in~\eqref{doublingconst}. 
Fix any $B>\frac{A_{d,1+2|X|} A}{1-\phi_q(s)}$ large enough so that for all $\delta \in (1/2,1]$ there exists a cover $\{U_i^\delta\}_i$ of $F$ such that $\delta \leq |U_i| \leq \delta^\theta$ for all $i$, and $\sum_i |U_i|^s \leq B$. It suffices to show that $\uid F \leq s$, which follows from the following claim. 

\textbf{Claim:} For all $n \in \N$, for all $\delta \in \left(\frac{1}{n+1},\frac{1}{n}\right]$ there exists a cover $\{U_i^\delta\}_i$ of $F$ such that $\delta \leq |U_i| \leq \delta^\theta$ for all $i$, and $\sum_i |U_i|^s \leq B$. 

\textbf{Proof of claim:} We prove the claim by induction on $n$. The claim holds for $n=1$ by the definition of $B$. Let $n \in \N$, $n >1$, and assume the claim holds for $1,2,\ldots,n-1$. Let $\delta \in \left(\frac{1}{n+1},\frac{1}{n}\right]$. By the definition of $A$ there exists a cover $\{V_j\}$ of $P_q$ such that $\delta \leq |V_j| \leq \delta^\theta$ for all $j$, and $\sum_j |V_j|^t \leq A$. 
By the definition of $A_{d,1+2|X|}$, for all $j$ there exist $V_{j,1},\ldots,V_{j,A_{d,1+2|X|}} \subseteq \Rd$, each of diameter 
\[\max\{\delta,|\mathcal{S}_{|X|\delta}(V_j)|/(1+2|X|)\},\]
 such that 
 \[\mathcal{S}_{|X|\delta}(V_j) \subseteq \bigcup_{k=1}^{A_{d,1+2|X|}} V_{j,k}.\]
  By the triangle inequality, 
\[|\mathcal{S}_{\delta |X|} (V_j)| \leq |V_j| + 2|X|\delta \leq (1+2|X|)|V_j| \leq (1+2|X|)\delta^\theta,\]
so $\delta \leq |V_{j,k}| \leq \delta^\theta$ and $|V_{j,k}| \leq |V_j|$ for all $j,k$. 
Recalling that $M_q$ is the set of maps corresponding to words of length $q$, let $C_{\delta} \coloneqq \{ \, \tau \in M_q : |F_\sigma| \leq |X|\delta \, \}$. 
Since $\{V_j\}$ covers $P_q$, $\{\mathcal{S}_{|X|\delta}(V_j)\}$ covers $\cup_{\tau \in C_\delta} F_\tau$, so $\cup_{k=1}^{A_{d,1+2|X|}} V_{j,k}$ covers $\cup_{\tau \in C_\delta} F_\tau$. 

Now suppose $\sigma \in M_q \setminus C_\delta$, so $|X|\delta < |F_\sigma| \leq |X|R_\sigma$, so $\delta/R_\sigma < 1$, and since $R_\sigma <1/4$ we have 
\[ \frac{\delta}{R_\sigma} \geq \frac{1}{(n+1)R_\sigma} > \frac{4}{n+1} > \frac{1}{n}.\] Therefore by the inductive assumption there exists a cover $\{U_i^{\delta/R_\sigma}\}$ of $F$ such that $\delta/R_\sigma \leq |U_i^{\delta/R_\sigma}| \leq (\delta/R_\sigma)^\theta$ for all $i$ and $\sum_i |U_i^{\delta/R_\sigma}|^s \leq B$. 
For each $i$, let $W_{\sigma,i}$ be a set with diameter 
\begin{equation}\label{wsigma} 
|W_{\sigma,i}| = \max\{|S_\sigma(U_i^{\delta/R_\sigma})|,\delta\}
\end{equation}
 such that $S_\sigma(U_i^{\delta/R_\sigma}) \subseteq W_{\sigma,i}$. Since $\{S_\sigma (U_i^{\delta/R_\sigma})\}_i$ covers $F_\sigma$, also $\{W_{\sigma,i}\}_i$ covers $F_\sigma$. By the definition of $R_\sigma$, for all $j$ we have $|S_\sigma(U_i^{\delta/R_\sigma})| \leq R_\sigma|U_i^{\delta/R_\sigma}|$, and also $\delta = R_\sigma \delta/R_\sigma \leq R_\sigma |U_i^{\delta/R_\sigma}|$, so by~\eqref{wsigma} we have 
\begin{equation}\label{intkeybound} \delta \leq |W_{\sigma,i}| \leq R_\sigma|U_i^{\delta/R_\sigma}| \leq R_\sigma (\delta/R_\sigma)^\theta \leq \delta^\theta.
\end{equation}
 The last inequality (which is crucial to the argument) holds since $R_\sigma < 1$. 
 Now, $\{V_{j,k}\} \cup \{W_{\sigma,i}\}$ is a cover of $F$ and the diameter of each of these sets lies in the interval $[\delta,\delta^\theta]$. 
Moreover, since $|V_{j,k}| \leq |V_j|$ and by~\eqref{intkeybound}, 
\begin{align*}
\sum_j \sum_{k=1}^{A_{d,1+2|X|}} |V_{j,k}|^s + \sum_{\sigma \in M_q \setminus C_\delta} \sum_i |W_{\sigma,i}|^s 
&\leq A_{d,1+2|X|} \sum_j |V_j|^s + \sum_{\sigma \in M_q \setminus C_\delta} R_\sigma^s \sum_i |U_i^{\delta/R_\sigma}|^s \\
&\leq A_{d,1+2|X|} A + B\phi_q(s) \\
&\leq B,
\end{align*}
so the claim holds by induction.

\eqref{phiub} By Lemma~\ref{phiinvertible} we may assume without loss of generality that $\Phi$ is invertible. Then~\eqref{phiub} holds by almost exactly the same proof as~\eqref{intub}, with $\uid$, $\delta^\theta$ and $(\delta/R_w)^\theta$ replaced by $\upd$, $\Phi^{-1}(\delta)$ and $\Phi^{-1}(\delta/R_w)$ respectively throughout. In place of~\eqref{intkeybound}, the key inequality $R_w \Phi^{-1}(\delta/R_w) \leq \Phi^{-1}(\delta)$ holds since $\Phi(\delta)/\delta \searrow 0$ monotonically as $\delta \to 0^+$ by assumption. 
\end{proof}

Elements of $M_q \setminus C_\delta$ can have many different contraction ratios, and we see from~\eqref{intkeybound} that many different scales in the interval $[\delta,\delta^\theta]$ will generally be used in the construction of the cover.

\subsection{Precise formulae for conformal iterated function systems}\label{conformalsect}

In order to use the upper bounds in Theorem~\ref{inttypeub} to prove a simpler formula for intermediate dimensions of the limit set of a CIFS in Theorem~\ref{mainint} as the maximum of the Hausdorff dimension and the intermediate dimensions of the fixed points, we need further lemmas. 
The following lemma and proof are similar to~\cite[Proposition~2.9]{Mauldin1999} for the box dimension. 

\begin{lma}\label{samewithinlevel}%
Consider a CIFS and fix $n \in \N$. If $P$ and $Q$ are both subsets of $\cup_{w \in I^n} S_w(X)$ which intersect $S_w(X)$ in exactly one point for each $w \in I^n$ then $\uid P = \uid Q$ for all $\theta \in [0,1]$, and $\upd P = \upd Q$ for all monotonically admissible functions $\Phi$.  
The same holds with $\overline{\dim}$ replaced by $\underline{\dim}$ throughout. 
\end{lma}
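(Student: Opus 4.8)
The plan is to show that $P$ and $Q$ are \emph{bi-Lipschitz equivalent after a finite-to-one correspondence at the cylinder level}, and then use the fact that intermediate and $\Phi$-intermediate dimensions are bi-Lipschitz invariant. More precisely, for each $w \in I^n$ let $\{p_w\} = P \cap S_w(X)$ and $\{q_w\} = Q \cap S_w(X)$, so that $P = \{p_w : w \in I^n\}$ and $Q = \{q_w : w \in I^n\}$ (note that distinct words could in principle give the same point, but that only makes the sets smaller, which is harmless). The key geometric input is that $p_w$ and $q_w$ both lie in $S_w(X)$, which has diameter comparable to $\lVert S_w' \rVert$ by Lemma~\ref{2nddiams}; hence $\lVert p_w - q_w \rVert \le |S_w(X)| \le D\lVert S_w'\rVert$, and by the bounded distortion property the cylinders $S_w(X)$ for different $w \in I^n$ are "uniformly roundish" and their relative sizes are controlled. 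The bijection $p_w \mapsto q_w$ moves each point by at most the diameter of the cylinder it sits in, and I want to argue this is enough to conclude the dimensions agree.

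The cleanest route is to avoid constructing an explicit bi-Lipschitz map and instead argue directly with covers. Given $\epsilon > 0$ and a cover $\{U_i\}$ of $P$ with $\delta^{1/\theta} \le |U_i| \le \delta$ and $\sum_i |U_i|^s \le \epsilon$, I would build a cover of $Q$ as follows. Partition $I^n$ into two families: words $w$ with $|S_w(X)| \le \delta$ (the "small" cylinders) and words with $|S_w(X)| > \delta$ (the "large" cylinders). For a large cylinder $S_w(X)$, it contains at most $M$ cylinders from level $n$... wait — more simply, since there are only countably many words but we need a cover at every scale $\delta$, I instead handle large and small cylinders separately. For small cylinders: if $p_w$ is covered by some $U_i$, then $q_w \in S_w(X) \subseteq \mathcal{S}_{|S_w(X)|}(\{p_w\}) \subseteq \mathcal{S}_\delta(U_i)$, so enlarging each $U_i$ to its closed $\delta$-neighbourhood $\mathcal{S}_\delta(U_i)$ and then chopping it into boundedly many (at most $A_{d,2}$, say, using~\eqref{doublingconst}) pieces of diameter at most $|U_i|$... actually one must be slightly careful that $|\mathcal{S}_\delta(U_i)| \le 2|U_i| + |U_i| = 3|U_i| \le 3\delta$, and then subdivide into $A_{d,3}$ pieces each of diameter $\le |U_i| \le \delta$ and $\ge \delta^{1/\theta}/3$, which is fine up to adjusting constants in the definition of intermediate dimension (the exact constant $1/\theta$ in $\delta^{1/\theta}$ can be absorbed — or one works with $\delta' = \delta/3$ and notes $\delta'^{1/\theta} \le |U_{i,k}|$ fails, so more care is needed; alternatively subdivide to get diameters exactly in $[\delta^{1/\theta}, \delta]$ which is possible since $A_{d,r}$ lets us hit any target size). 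This contributes at most $A_{d,3}\sum_i |U_i|^s \le A_{d,3}\, 3^s \epsilon$ to the $s$-sum. For the large cylinders: there are only finitely many $w \in I^n$ with $|S_w(X)| > \delta$ (since $|S_w(X)| \le D\rho^{|w|}... $ no — with $|w| = n$ fixed, but still $\sum_w |S_w(X)|^h$-type sums are finite for $h$ slightly above... hmm, actually we just need: the number of such words grows at most like a fixed function of $\delta$, and we cover each such finite point $q_w$ by a single set of diameter $\delta$, contributing finitely many terms each of size $\delta^s \to 0$; more carefully, the total is $\#\{w : |S_w(X)|>\delta\}\cdot \delta^s$, and this is bounded because... ).

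On reflection, the genuinely clean argument is the direct bi-Lipschitz one, and I would present it that way. Since intermediate dimensions and $\Phi$-intermediate dimensions are preserved under bijections $f$ satisfying $\tfrac12 |x-y| \le |f(x)-f(y)| \le 2|x-y|$ on a countable set \emph{away from small scales} — but $p_w \mapsto q_w$ need not be bi-Lipschitz globally (two words $w, w'$ could have $p_w$ close to $p_{w'}$ but $q_w$ far from $q_{w'}$). So instead I reduce to scale-by-scale cover transfer as above, which is the honest approach. The structure is therefore: (1) write $P = \{p_w\}_{w\in I^n}$, $Q = \{q_w\}_{w\in I^n}$ with $\{p_w\},\{q_w\} \in S_w(X)$; (2) note $\lVert p_w - q_w\rVert \le |S_w(X)| \le D\lVert S_w'\rVert$ by Lemma~\ref{2nddiams}; (3) split $I^n$ at scale $\delta$ into small ($|S_w(X)|\le\delta$) and large cylinders; (4) transfer a cover of $P$ to one of $Q$, using neighbourhood-enlargement plus~\eqref{doublingconst}-subdivision for small cylinders (moving points by $\le\delta$ keeps them inside a $\delta$-fattening, and subdividing restores the diameter bound at the cost of a bounded multiplicative constant), and brute-force single-set covers for the finitely many large cylinders (contributing a vanishing quantity as $\delta \to 0$, using that the number of level-$n$ cylinders of size $> \delta$ together with the crude bound on their count keeps $\#\{\cdot\}\cdot\delta^s$ controlled — in fact for the \emph{upper} intermediate dimension it suffices that this quantity $\to 0$ along $\delta \to 0$, which follows from $|S_w(X)| \le D\rho^n$ being eventually $\le \delta$ for \emph{all} $w$, so the large family is \emph{empty} once $\delta < D\rho^n$!); and (5) conclude $\uid Q \le \uid P$, and by symmetry equality. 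The $\Phi$-version is identical with $\delta^{1/\theta}$ replaced by $\Phi^{-1}(\delta)$ throughout.

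The main obstacle I anticipate is bookkeeping the diameter constraints: after enlarging $U_i$ to $\mathcal{S}_\delta(U_i)$ the diameter may drop below the lower threshold $\delta^{1/\theta}$ relative to the new scale, or the multiplicative constants from~\eqref{doublingconst} could push a set out of $[\delta^{1/\theta},\delta]$. This is handled by the standard observation (already used in the proof of Theorem~\ref{inttypeub}) that $A_{d,r}$ permits subdivision to \emph{any} prescribed smaller diameter, so one subdivides the enlarged sets into pieces of diameter exactly $\delta$ (or any value in the legal window), at the cost of the bounded factor $A_{d,r}$ in the $s$-sum; and the observation just noted that for $\delta$ small enough there are \emph{no} large cylinders, since $n$ is fixed and $|S_w(X)| \le D\rho^n|X|/|X| \cdot \ldots \le D\rho^n$ is a fixed bound. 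Hence the whole argument collapses to the small-cylinder case plus a clean-up, and symmetry finishes it.
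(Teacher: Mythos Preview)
Your approach has a genuine gap in the treatment of the ``large'' cylinders. You split $I^n$ at the global scale $\delta$ into $\{w : |S_w(X)| \le \delta\}$ and $\{w : |S_w(X)| > \delta\}$, and then claim that the large family is eventually empty because $|S_w(X)| \le D\rho^n$ is a fixed upper bound. But the inequality goes the wrong way: the large family is empty only when $\delta$ \emph{exceeds} $\sup_w |S_w(X)|$, whereas you need $\delta \to 0$. As $\delta \to 0$ the large family typically grows without bound (for instance if $|S_w(X)| \asymp 1/i$ for $w = i \in I = \N$ then $\#\{w : |S_w(X)| > \delta\} \asymp \delta^{-1}$, so $\#\{\cdot\}\cdot\delta^s \asymp \delta^{s-1}$, which does not tend to $0$ for $s < 1$). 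Your earlier musing that one might control $\#\{w : |S_w(X)| > \delta\}\cdot\delta^s$ directly therefore also fails.

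The paper's fix is to split not at the global scale $\delta$ but at the scale of each individual covering set: for each $U_j$ in the cover of $Q$, compare $|S_w(X)|$ with $|U_j|$. Small cylinders meeting $U_j$ are handled as you do (enlarge $U_j$ to $\mathcal{S}_{|U_j|}(U_j)$ and subdivide via~\eqref{doublingconst}). For the large cylinders the key input you are missing is Lemma~\ref{fitin}: at most $M$ words $w$ can have $|S_w(X)| > |U_j|$ and $S_w(X) \cap U_j \neq \varnothing$, uniformly in $j$ and $\delta$. One then covers each of the (at most $M$) corresponding points of $P$ by a single set of diameter $|U_j|$. This yields $\sum |V_m|^s \le (A_{d,3} + M)\sum_j |U_j|^s$, with no extra term to send to zero. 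Without invoking Lemma~\ref{fitin} (or an equivalent consequence of the OSC and cone condition), your argument cannot close.
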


\begin{proof} 
The idea is to use an efficient cover of $Q$ at scale $\delta$ to construct an efficient cover of $P$ at scale $\delta$. Elements of $P$ in cylinders of size $\lesssim \delta$ can be covered using the cover of the element of $Q$ in the same cylinder, and the conditions of a CIFS (via Lemma~\ref{fitin}) mean that any element of the cover can intersect only a bounded number of cylinders that are larger than the covering set in question. 

Since $\{ \, S_w : w \in I^n \, \}$ forms a CIFS with the same limit set, we may henceforth assume without loss of generality that $n=1$. 
Let $A_{d,3}$ be as in~\eqref{doublingconst} and let $M$ be as in Lemma~\ref{fitin}. If $\theta = 0$ then $\uid P = \uid Q = 0$ because $P$ and $Q$ are countable, so henceforth assume that $\theta \in (0,1]$. 

\textbf{Claim:} If $\{U_j\}$ is a cover of $Q$ such that $\delta \leq |U_j| \leq \delta^\theta$ for all $j$ then there exists a cover $\{V_m\}$ of $P$ such that $\delta \leq |V_m| \leq \delta^\theta$ for all $m$, and $\sum_m |V_m|^s \leq (A_{d,3} + M)\sum_j |U_j|^s$ for all $s\geq 0$.

\textbf{Proof of claim:} For each $j$, if $i \in I$ is such that $|S_i(X)| \leq |U_j|$ and $S_i(X) \cap U_j \neq \varnothing$, then 
\[ S_i(X) \subseteq \mathcal{S}_{|U_j|}(U_j) \subseteq \bigcup_{l=1}^{A_{d,3}} \mathcal{S}_{|U_j|}(U_j)_l,\]
 where $\mathcal{S}_{|U_j|}(U_j)$ is the neighbourhood set, which has diameter $3|U_j|$. By Lemma~\ref{fitin} there exist $i_1, \ldots, i_M \in I$, not necessarily distinct, such that $S_{i_k}(X) \cap U_j \neq \varnothing$ for $k=1,\ldots,M$, and such that if $i \in I \setminus \{i_1,\ldots,i_M\}$ and $|S_i(X)| > |U_j|$ then $S_i(X) \cap U_j = \varnothing$. If $k=1,\ldots,M$ then we can cover the single element of $P \cap S_{i_k}(X)$ by a ball $B_{j,k}$ of diameter $|U_j|$. Since $\{U_j\}$ covers $Q$, 
\[P \subseteq \bigcup_j \left( \bigcup_{l=1}^{A_{d,3}} \mathcal{S}_{|U_j|}(U_j)_l \cup \bigcup_{k=1}^M B_{j.k} \right).\]
 Each element of this cover of $P$ has diameter in the interval $[\delta,\delta^\theta]$ by construction. Moreover, 
 \[ \sum_j \left(\sum_{l=1}^{A_{d,3}} |\mathcal{S}_{|U_j|}(U_j)_l|^s + \sum_{k=1}^M |B_{j,k}|^s\right) = (A_{d,3} + M)\sum_j |U_j|^s, \]
 proving the claim. 
 
 The claim shows that $\uid P \leq \uid Q$ and $\lid P \leq \lid Q$. The reverse inequalities hold by symmetry, so $\uid P = \uid Q$ and $\lid P = \lid Q$, as required. 
 If $\Phi$ is a monotonically admissible function then by Lemma~\ref{phiinvertible} we may assume without loss of generality that $\Phi$ is invertible. Then the same proof works with $\delta^\theta$, $\uid$ and $\lid$ replaced by $\Phi^{-1}(\delta)$, $\upd$ and $\lpd$ respectively throughout. 
 \end{proof}

Lemma~\ref{changelevel} shows that the upper intermediate dimensions of a set of points corresponding to the $n$th level cylinders are either all bounded above by the finiteness parameter, and hence the Hausdorff dimension of the limit set, or they all equal the upper intermediate dimensions of the 1st-level fixed points. 
We will combine this lemma with the upper bounds in Theorem~\ref{inttypeub} (which considers arbitrarily deep levels) to prove that the dimensions in fact depend only on the 1st-level fixed points (and the Hausdorff dimension). 
Mauldin and Urbański prove in~\cite[Lemma~2.10]{Mauldin1999} that the upper box dimension of the 1st-level iterates of a given point is greater than or equal to the finiteness parameter $\theta_S$, and deduce that it equals the box dimension of the $n$th level iterates for all $n \in \N$. The intermediate dimensions, on the other hand, will \emph{not} always exceed the finiteness parameter, so we cannot make the same conclusion for the intermediate dimensions in Lemma~\ref{changelevel}. 

 \begin{lma}\label{changelevel}
 Consider a CIFS, and suppose that for each $n \in \N$, $P_n \subseteq \cup_{w \in I^n} S_w(X)$ intersects $S_w(X)$ in exactly one point for each $w \in I^n$. Then 
 
 \begin{enumerate}
 \item\label{changeleveluid} for all $\theta \in [0,1]$, either $\uid P_n \leq \theta_S \leq h$ for all $n \in \N$ or $\uid P_n = \uid P_1$ for all $n \in \N$. 
 \item\label{changelevelupd} If $\Phi$ is monotonically admissible then either $\upd P_n \leq \theta_S \leq h$ for all $n \in \N$ or $\upd P_n = \upd P_1$ for all $n \in \N$. 
 \end{enumerate}
 \end{lma}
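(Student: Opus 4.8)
The plan is to establish the two inequalities
\[ \uid P_1 \;\le\; \uid P_n \;\le\; \max\{\theta_S,\, \uid P_1\} \qquad \text{for all } n \in \N \]
(and likewise with $\upd$ in place of $\uid$), after which the dichotomy follows by splitting on the value of $\uid P_1$: if $\uid P_1 \le \theta_S$ then $\uid P_n \le \max\{\theta_S,\uid P_1\} = \theta_S$ for all $n$, and moreover $\theta_S \le h$ since by Lemma~\ref{pressurestrict} $P$ is decreasing and identically $+\infty$ on $(0,\theta_S)$, so no $t<\theta_S$ satisfies $P(t)<0$; this is the first alternative. If instead $\uid P_1 > \theta_S$ then $\max\{\theta_S,\uid P_1\} = \uid P_1$, so the sandwich collapses to $\uid P_n = \uid P_1$ for all $n$, the second alternative. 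By Lemma~\ref{samewithinlevel} each $\uid P_n$ depends only on $n$, so I may replace $P_n$ by any convenient representative; for the $\Phi$-intermediate statement I would first apply Lemma~\ref{phiinvertible} to reduce to invertible $\Phi$ and then argue identically.

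For the lower bound I would fix $i_0 \in I$ and consider the sub-family $\{S_{i_0^{n-1}i}(X) : i \in I\}$ of level-$n$ cylinders, where $i_0^{n-1} = i_0\cdots i_0 \in I^{n-1}$. Since $S_{i_0^{n-1}i}(X) = S_{i_0^{n-1}}(S_i(X))$, applying $S_{i_0^{n-1}}^{-1}$ to the corresponding points of $P_n$ yields a level-$1$ point set $\hat P$ (up to a boundary set of dimension $0$), so $S_{i_0^{n-1}}(\hat P) \subseteq P_n$. By Lemma~\ref{diameterslemma} the constants $r_{i_0^{n-1}}$ and $R_{i_0^{n-1}}$ are comparable, so $S_{i_0^{n-1}}$ is a bi-Lipschitz embedding of $X$; monotonicity under inclusion and bi-Lipschitz invariance of the (resp. $\Phi$-)intermediate dimensions, combined with Lemma~\ref{samewithinlevel}, then give $\uid P_n \ge \uid S_{i_0^{n-1}}(\hat P) = \uid \hat P = \uid P_1$.

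The heart of the proof is the upper bound $\uid P_n \le \max\{\theta_S, \uid P_1\}$, which I would prove by induction on $n$ using the covering construction from the proof of Theorem~\ref{inttypeub} with split level $q=1$; the case $n=1$ is trivial. Fix $s > \max\{\theta_S, \uid P_1\}$ and write $P_n = \bigcup_{i\in I} S_i(Q^{(i)})$ with $Q^{(i)} = S_i^{-1}(P_n\cap S_i(X))$ a level-$(n-1)$ point set. By the inductive hypothesis and Lemma~\ref{samewithinlevel}, $\uid Q^{(i)} = \uid P_{n-1} \le \max\{\theta_S,\uid P_1\} < s$, and the uniform constant in the proof of Lemma~\ref{samewithinlevel} provides a single $A$ so that at every small scale $\eta$ each $Q^{(i)}$ (and also $P_1$) admits a cover by sets of diameter in $[\eta,\eta^\theta]$ with $s$-sum at most $A$. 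Given small $\delta$, I would cover $P_n$ as in Theorem~\ref{inttypeub}: the cylinders $S_i(X)$ with $|S_i(X)| \le |X|\delta$ are covered collectively by inflating a scale-$\delta$ cover of $P_1$ to its $|X|\delta$-neighbourhoods and subdividing them into sets of diameter in $[\delta,\delta^\theta]$, with $s$-cost $\le A_{d,1+2|X|}A$; for each remaining cylinder one pushes forward a cover of $Q^{(i)}$ at scale $\delta/R_i$, the images having diameter $\le R_i(\delta/R_i)^\theta = R_i^{1-\theta}\delta^\theta \le \delta^\theta$ (and $\ge\delta$ after a bounded inflation) and $s$-cost a constant times $R_i^s$. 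Summing, the total $s$-cost is at most a constant times $A_{d,1+2|X|}A + \sum_{i\in I} R_i^s = A_{d,1+2|X|}A + \phi_1(s)$, and the crucial point — the reason this reaches $\theta_S$ rather than only $h$ as in Theorem~\ref{inttypeub} — is that $\phi_1(s) < \infty$ already for $s > \theta_S$ (for a CIFS $\theta_S = \inf\{t : \psi_n(t)<\infty\}$ and $\psi_1 \asymp \phi_1$), so no negativity of $P(s)$ is needed: covering a level-$n$ point set refers only to the strictly shallower point sets $Q^{(i)}$ and to $P_1$, all controlled by induction, so the total cost is a genuinely convergent sum rather than a self-referential inequality requiring $\phi_q(s)<1$. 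Having bounded the $s$-cost by a $\delta$-independent constant, the standard exponent-bumping argument (at exponent $s'>s$ the cost is $\le \delta^{\theta(s'-s)}(\text{const}) \to 0$) gives $\uid P_n \le s$, and letting $s \downarrow \max\{\theta_S,\uid P_1\}$ completes the induction. In the $\Phi$-intermediate version the only change is that the diameter inequality for the pushed-forward sets becomes $R_i\Phi^{-1}(\delta/R_i) \le \Phi^{-1}(\delta)$, which holds precisely because $\Phi(\delta)/\delta$ tends monotonically to $0$.

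I expect the main obstacle to be the bookkeeping in the upper-bound construction: ensuring all diameters stay in $[\delta,\delta^\theta]$ through the push-forward, inflation and subdivision steps, and — most importantly — checking that a single efficient cover of one fixed level-$(n-1)$ point set yields, via the argument of Lemma~\ref{samewithinlevel}, efficient covers of all the $Q^{(i)}$ with a common constant. These are exactly the technical ingredients already present in Theorem~\ref{inttypeub} and Lemma~\ref{samewithinlevel}, so the work lies in assembling them correctly rather than in any new idea.
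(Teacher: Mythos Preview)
Your approach is essentially the paper's: both establish $\uid P_1 \le \uid P_n \le \max\{\theta_S, \uid P_1\}$ by an inductive covering argument, the key point being that $\phi_k(s) < \infty$ for $s > \theta_S$ (so no negativity of the pressure is needed), and then read off the dichotomy. The only structural difference is the direction of the split: you peel off the \emph{first} letter of the word (decomposing $P_n$ over first-level cylinders, covering the small ones via a cover of $P_1$ and the large ones by pushing forward level-$(n-1)$ covers), whereas the paper peels off the \emph{last} letter (decomposing over level-$(n-1)$ cylinders, covering the small ones via the inductive cover of $P_{n-1}$ and the large ones by pushing forward covers of $P_1$). These are mirror images of one another and neither has a real advantage. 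For the lower bound the paper's one-line argument --- choose $P_n$ so that a subset of it is itself a valid level-$(n-1)$ point set and apply monotonicity --- is a touch slicker than your bi-Lipschitz embedding via $S_{i_0^{n-1}}$, though both are fine.

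One small cleanup: your $Q^{(i)} = S_i^{-1}(P_n \cap S_i(X))$ need not be exactly a level-$(n-1)$ point set, since under the OSC a point $S_w(x)$ with $w_1 \neq i$ may nonetheless lie in $S_i(X)$, so $Q^{(i)}$ can pick up extra points. The fix is immediate: once you use Lemma~\ref{samewithinlevel} to reduce to $P_n = \{S_w(x) : w \in I^n\}$ for a fixed $x$, you have the exact decomposition $P_n = \bigcup_{i \in I} S_i(P_{n-1})$, so you may simply take $Q^{(i)} = P_{n-1}$ for every $i$. This also makes the uniformity in $i$ of the covering constant automatic, rather than having to extract it from the proof of Lemma~\ref{samewithinlevel}.
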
%

 \begin{proof}
 \eqref{changeleveluid} This is true for the Hausdorff dimension because each $P_n$ is countable, so henceforth suppose $\theta \in (0,1]$. Since $\uid P_n$ does not depend on the particular set $P_n$ by Lemma~\ref{samewithinlevel}, and since $\uid$ is monotonic for subsets, we have $\uid P_1 \leq \uid P_2 \leq \ldots$. 
 By Lemma~\ref{samewithinlevel}, we can henceforth fix $x \in X$ and assume without loss of generality that $P_n \coloneqq \{ \, S_w(x) : w \in I^n \, \}$ for all $n \in \N$. 
 It suffices to prove that $\uid P_n \leq s$ for all $n \in \N$, which follows from the following claim. 
 
 \textbf{Claim:} For all $s>\max\{\theta_S, \uid P_1\}$, for all $n \in \N$ there exists $B_n \in (0,\infty)$ such that for all $\delta \in (0,1]$ there exists a cover $\{U_j^{\delta,n}\}_j$ of $P_n$ such that $\delta \leq |U_j^{\delta,n}| \leq \delta^\theta$ for all $i$ and $\sum_j |U_j^{\delta,n}|^s \leq B_n$. 
 
 \textbf{Proof of claim:} Fix $s>\max\{\theta_S, \uid P_1\}$. We prove the claim by induction on $n$. Suppose $n>1$ and assume the claim holds for $1,2,\ldots,n-1$. 
 Let $\delta \in (0,1]$. 
 By the definition of $A_{d,1+2|X|}$ in~\eqref{doublingconst}, for all $j$ there exist $U_{j,1}^{\delta,n-1},\ldots,U_{j,A_{d,1+2|X|}}^{\delta,n-1} \subseteq \Rd$, each of diameter 
 \[ \max\left\{\delta,\frac{|\mathcal{S}_{|X|\delta}(U_j^{\delta,n-1})|}{1+2|X|}\right\},\]
  such that 
  \[\mathcal{S}_{|X|\delta}(U_j^{\delta,n-1}) \subseteq \cup_{k=1}^{A_{d,1+2|X|}} U_{j,k}^{\delta,n-1}.\]
   By the triangle inequality, 
\[|\mathcal{S}_{\delta |X|} (U_j^{\delta,n-1})| \leq |U_j^{\delta,n-1}| + 2|X|\delta \leq (1+2|X|)|U_j^{\delta,n-1}| \leq (1+2|X|)\delta^\theta.\]
Therefore $\delta \leq |U_{j,k}^{\delta,n-1}| \leq \delta^\theta$ and $|U_{j,k}^{\delta,n-1}| \leq |U_j^{\delta,n-1}|$ for all $j,k$.
 
 Let $C_{\delta} \coloneqq \{ \, w \in I^{n-1} : |X_w| \leq |X|\delta \, \}$. If $w \in C_\delta$ then there exists $p_w \in S_w(X) \cap P_{n-1}$, and there exists $j$ such that $p_w \in U_j^{\delta,n-1}$, so the neighbourhood set $\mathcal{S}_{|X|\delta} (U_j^{\delta,n-1})$ covers $S_w(X)$. Thus 
 \begin{equation}\label{changelevelinclusion} P_n \cap S_w(X) \subseteq S_w(X) \subseteq \mathcal{S}_{|X|\delta} (U_j^{\delta,n-1}) \subseteq \bigcup_{k=1}^{A_{d,1+2|X|}} U_{j,k}^{\delta,n-1}. 
 \end{equation}
 If, on the other hand, $w \in I^{n-1}\setminus C_\delta$, then $|X|\delta < |X_w| \leq |X|R_w$ and $\delta/R_w < 1$. Consider the cover $\{U_l^{\delta/R_w,1}\}_l$ of $P_1$ whose existence is guaranteed by the base case $n=1$. 
  For each $l$, let $W_{w,l}$ be a set with diameter $|W_{w,l}| = \max\{|S_w(U_l^{\delta/R_w,1} \cap X)|,\delta\}$ such that $S_w(U_l^{\delta/R_w,1} \cap X) \subseteq W_{w,l}$. Since $P_n \coloneqq \{ \, S_w(x) : w \in I^n \, \}$, $\{S_w(U_l^{\delta/R_w,1} \cap X)\}_l$ covers $P_n \cap S_w(X)$, so $\{W_{w,l}\}_l$ covers $P_n \cap S_w(X)$. By the definition of $R_w$, for all $l$ we have $|S_w(U_l^{\delta/R_w,1} \cap X)| \leq R_w |U_l^{\delta/R_w,1} \cap X| \leq R_w |U_l^{\delta/R_w,1}|$, %
  and also $\delta = R_w\delta/R_w \leq R_w |U_l^{\delta/R_w,1}|$, so 
 \begin{equation}\label{intlemmakeybound}
 \delta \leq |W_{w,l}| \leq R_w |U_l^{\delta/R_w,1}| \leq R_w (\delta/R_w)^\theta \leq \delta^\theta. 
 \end{equation}
 Now, $\{U_{j,k}^{\delta,n-1}\} \cup \{W_{w,l}\}$ covers $P_n$ and the diameter of each element of this cover lies in the interval $[\delta,\delta^\theta]$. Moreover, since $|U_{j,k}^{\delta,n-1}| \leq |U_j^{\delta,n-1}|$ for all $j,k$, and by~\eqref{intlemmakeybound}, we have 
 \begin{align*}
  \sum_j \sum_{k=1}^{A_{d,1+2|X|}}|U_{j,k}^{\delta,n-1}|^s &+ \sum_{w \in I^n \setminus C_\delta} \sum_l |W_{w,l}|^s \\
  &\leq A_{d,1+2|X|} \sum_j |U_j^{\delta,n-1}|^s + \sum_{w \in I^n \setminus C_\delta} R_w \sum_l |U_l^{\delta/R_w,1}|^s \\
 &\leq A_{d,1+2|X|} B_{n-1} + B_1 \phi_n(s).
 \end{align*}
 Therefore letting $B_n \coloneqq A_{d,1+2|X|} B_{n-1} + B_1 \phi_n(s)$, since $s>\theta_S$, $\phi_n(s) < \infty$, so $B_n < \infty$, and the claim holds by induction. 
 
 \eqref{changelevelupd} By Lemma~\ref{phiinvertible} we may assume that $\Phi$ is invertible. Then~\eqref{changelevelupd} holds by the same proof as~\eqref{changeleveluid}, with $\uid$, $\delta^\theta$ and $(\delta/R_w)^\theta$ replaced by $\upd$, $\Phi^{-1}(\delta)$ and $\Phi^{-1}(\delta/R_w)$ respectively. In place of~\eqref{intlemmakeybound}, $R_w \Phi^{-1}(\delta/R_w) \leq \Phi^{-1}(\delta)$ holds since $\Phi(\delta)/\delta \searrow 0$ monotonically as $\delta \to 0^+$ by assumption. 
 \end{proof}

Mauldin and Urbański~\cite[Theorem~3.15]{Mauldin1996} show that the Hausdorff dimension of the limit set $F$ of a CIFS is $h$. 
In fact, this is true even if the cone condition~\eqref{cone} is not assumed, but in this paper we do use the cone condition in the proof of Lemma~\ref{coneimplication} (and hence Lemmas~\ref{fitin} and~\ref{samewithinlevel}). 
We now use the fact that $h = \dim_{\mathrm H} F$, together with the upper bounds in Theorem~\ref{inttypeub} and Lemmas~\ref{changelevel} and \ref{samewithinlevel}, to prove our main result, which gives the following simple formulae for other dimensions of the limit set as the maximum of the Hausdorff dimension of the limit set and the corresponding dimension of any set $P$ which consists of exactly one point of each cylinder set. 

\begin{thm}\label{mainint}
For any CIFS with limit set $F$ and notation as above, if $P$ is any subset of $\cup_{i \in I} S_i(X)$ which intersects $S_i(X)$ in exactly one point for each $i \in I$, then 
\begin{enumerate}
\item\label{boxeq} $\ubd F = \dim_\mathrm{P} F = \max\{h,\ubd P\}$ (very similar to Mauldin and Urbański~\cite[Theorem~2.11]{Mauldin1999} but with a more general condition on $P$),
\item\label{inteq} $\uid F = \max\{h,\uid P\}$ for all $\theta \in [0,1]$,
\item\label{phieq} $\upd F = \max\{h,\upd P\}$ if $\Phi$ is monotonically admissible.
\end{enumerate}
\end{thm}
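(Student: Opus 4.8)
The plan is to deduce Theorem~\ref{mainint} by combining the general upper bounds of Theorem~\ref{inttypeub} with the structural lemmas \ref{samewithinlevel} and \ref{changelevel}, together with the Mauldin--Urbański identity $\dim_\mathrm{H} F = h$. I will give the argument for \eqref{inteq}; parts \eqref{boxeq} and \eqref{phieq} follow by the same reasoning with the obvious substitutions ($\uid \to \ubd$ or $\uid \to \upd$, noting $\dim_\mathrm{P} F \le \ubd F$ always and the reverse bound $\dim_\mathrm{P} F \ge \ubd F$ coming from Theorem~\ref{inttypeub}\eqref{boxub}, or rather from the fact that the upper bound we prove is attained as a lower bound too). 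Since $\theta = 0$ is the statement $\dim_\mathrm{H} F = \max\{h, 0\} = h$, which is the Mauldin--Urbański theorem, I assume $\theta \in (0,1]$ throughout.

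\textbf{Lower bound.} Fix $x \in X$ and let $P_n \coloneqq \{\,S_w(x) : w \in I^n\,\}$; by Lemma~\ref{samewithinlevel} it suffices to prove the formula with this choice of $P = P_1$. For the lower bound $\uid F \ge \max\{h, \uid P_1\}$, note first that $\uid F \ge \dim_\mathrm{H} F = h$ since intermediate dimensions dominate Hausdorff dimension. Second, the set $\pi(w w w \cdots)$ of fixed points, or more simply $P_1 \subseteq F$ after translating by a bounded bi-Lipschitz adjustment: actually $P_1$ need not lie in $F$, but each point of $P_1$ is within $\rho|X|$ of $F$ and, more usefully, the scaled copies $S_w(P_1) \subseteq S_w(X)$ for $w \in I^n$ show that $P_{n}$ behaves like a subset of the level-$n$ cylinder structure. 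The clean way: $\uid$ is monotone and the set $Q \coloneqq \{\pi(i i i \cdots) : i \in I\}$ of genuine fixed points is a subset of $F$ which intersects each $S_i(X)$ in (at least) one point, so by Lemma~\ref{samewithinlevel} $\uid P = \uid Q \le \uid F$, giving $\uid F \ge \max\{h, \uid P\}$.

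\textbf{Upper bound.} This is where Theorem~\ref{inttypeub}\eqref{intub} and Lemma~\ref{changelevel}\eqref{changeleveluid} combine. For each $n$, choosing $P_n$ to intersect each $S_w(X)$, $w \in I^n$, in one point gives sets that are (by Lemma~\ref{samewithinlevel}) all of the same intermediate dimension regardless of the choice, and Theorem~\ref{inttypeub}\eqref{intub} yields
\[ \uid F \le \max\Bigl\{h,\ \lim_{n \to \infty} \uid P_n\Bigr\}. \]
Now invoke Lemma~\ref{changelevel}\eqref{changeleveluid}: either $\uid P_n \le \theta_S \le h$ for every $n$, in which case the limit is $\le h$ and we get $\uid F \le h = \max\{h, \uid P\}$ (using $\uid P \le h$ in this case); or $\uid P_n = \uid P_1 = \uid P$ for every $n$, in which case the limit equals $\uid P$ and we get $\uid F \le \max\{h, \uid P\}$. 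Either way the desired upper bound holds, and combined with the lower bound this proves \eqref{inteq}.

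\textbf{Main obstacle.} The genuinely substantive work has already been packaged into the lemmas — the inductive multi-scale covering arguments of Theorem~\ref{inttypeub} and Lemma~\ref{changelevel}, and the CIFS geometry (cone condition, bounded distortion, Lemma~\ref{fitin}) underlying Lemma~\ref{samewithinlevel}. So the deduction itself is short; the point requiring care is the case split in Lemma~\ref{changelevel} and checking that in the "$\le \theta_S$" branch one indeed has $\uid P \le h$ so that the formula $\max\{h, \uid P\} = h$ is consistent. One must also be slightly careful that the set $Q$ of fixed points genuinely lies in $F$ (each $\pi(iii\cdots)$ is a well-defined point of $F$ meeting $S_i(X)$), so that the lower bound via monotonicity and Lemma~\ref{samewithinlevel} is legitimate; alternatively one can run the lower bound directly through the projection $\pi$ without needing $Q \subseteq F$ at all, since $S_w(Q) \subseteq F$ for every $w \in I^*$.
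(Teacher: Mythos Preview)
Your argument for \eqref{inteq} is correct and essentially identical to the paper's: the upper bound comes from Theorem~\ref{inttypeub}\eqref{intub} combined with Lemmas~\ref{samewithinlevel} and~\ref{changelevel} (the paper phrases the case split as a single inequality $\uid P_n \le \max\{h,\uid P\}$, but the content is the same), and the lower bound comes from the fixed-point set $Q \subseteq F$ together with Lemma~\ref{samewithinlevel} and the Mauldin--Urba\'nski identity $h = \dim_\mathrm{H} F$. One small correction to your parenthetical on \eqref{boxeq}: the equality $\dim_\mathrm{P} F = \ubd F$ does not follow from Theorem~\ref{inttypeub}\eqref{boxub} (which gives only an upper bound for $\ubd F$) or from the lower-bound argument; the paper instead invokes Mauldin--Urba\'nski's \cite[Theorem~3.1]{Mauldin1996}, which establishes this equality directly for CIFS limit sets.
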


Examples for the set $P$ include $\{ \, S_i(x) : i \in I \, \}$ for any given $x \in X$, as in~\cite{Mauldin1999}, or the set of fixed points in $X$ of the contractions $S_i$. 

\begin{proof}

\eqref{boxeq} follows from the case $\theta=1$ of~\eqref{inteq} and the fact that $\ubd F = \dim_\mathrm{P} F$ by~\cite[Theorem~3.1]{Mauldin1996}. 

\eqref{inteq} For each $n \in \N$ let $P_n \coloneqq \{ \, x \in X : x = S_w(x) \mbox{ for some } w \in I^n \, \}$, so $P_n \subseteq F \subseteq \cup_{w \in I^n} S_w(X)$. 
Then by Lemmas~\ref{samewithinlevel} and \ref{changelevel}~\eqref{changeleveluid}, $\uid P_n \leq \max\{h,\uid P\}$ for all $n \in \N$. Therefore by Theorem~\ref{inttypeub}~\eqref{intub}, 
\[ \uid F \leq \max\{ h, \lim_{n \to \infty} \uid P_n \} \leq \max\{ h, \max\{h,\uid P \} \} = \max\{h,\uid P\}.\]
 But $P_n \subseteq F$ so since $\uid$ is monotonic for subsets, by Lemma~\ref{samewithinlevel} we have $\uid P = \uid P_1 \leq \uid F$, and by~\cite[Theorem~3.15]{Mauldin1996}, $h = \dim_\mathrm{H} F \leq \uid F$, so $\max\{h,\uid P\} \leq \uid F$. Therefore $\uid F = \max\{h,\uid P\}$, as required. 

\eqref{phieq} is similar to~\eqref{inteq}. 
\end{proof}

As a consequence we have the following bounds for the lower versions of the dimensions in Theorem~\ref{mainint}. 

\begin{cor}\label{lowerintcor}
For any CIFS with limit set $F$, if $P$ is any subset of $\cup_{i \in I} S_i(X)$ which intersects $S_i(X)$ in exactly one point for each $i \in I$, then 
\begin{enumerate}
\item\label{lowerbox} $\max\{h,\lbd P\} \leq \lbd F \leq \max\{h,\ubd P\}$
\item\label{lowerint} $\max\{h,\lid P\} \leq \lid F \leq \max\{h,\uid P\}$ for all $\theta \in [0,1]$ 
\item\label{lowerphi} $\max\{h,\lpd P\} \leq \lpd F \leq \max\{h,\upd P\}$ if $\Phi$ is monotonically admissible. 
\end{enumerate}
\end{cor}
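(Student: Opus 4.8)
The plan is to derive Corollary~\ref{lowerintcor} as a relatively quick consequence of Theorem~\ref{mainint} together with basic monotonicity properties of the dimensions. The key observations are that (i) the lower versions are sandwiched between the Hausdorff dimension and the upper versions, i.e. $\dim_\mathrm{H} F \le \lid F \le \uid F$, and (ii) the lower dimensions are monotonic under inclusion of sets. I would treat the three parts uniformly, since \eqref{lowerbox} is the case $\theta = 1$ of \eqref{lowerint} and \eqref{lowerphi} is proved the same way with the obvious notational substitutions.

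For the upper bound $\lid F \le \max\{h,\uid P\}$, I would simply note $\lid F \le \uid F$ by the general inequalities recorded in Section~\ref{definedimsec}, and then apply Theorem~\ref{mainint}\eqref{inteq} to get $\uid F = \max\{h,\uid P\}$. For the lower bound, I would argue on the two terms separately. First, by Mauldin and Urbański~\cite[Theorem~3.15]{Mauldin1996}, $h = \dim_\mathrm{H} F$, and since $\dim_\mathrm{H} F \le \lid F$ always holds, we get $h \le \lid F$. Second, to get $\lid P \le \lid F$, I would observe that $P_1 \coloneqq \{x \in X : x = S_i(x)\text{ for some }i \in I\} \subseteq F$, so monotonicity of $\lid$ under inclusion gives $\lid P_1 \le \lid F$; and $\lid P = \lid P_1$ because $P$ and $P_1$ both intersect each $S_i(X)$ in exactly one point, by the $\theta \in (0,1]$ case of Lemma~\ref{samewithinlevel} (the $\theta = 0$ case being trivial since $P$ and $P_1$ are countable). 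Strictly speaking Lemma~\ref{samewithinlevel} is stated for $\uid$ and $\upd$, but the proof via the Claim there produces, from a cover of one set at a given scale, a cover of the other set at the same scale with comparable sum — so it applies verbatim to $\lid$ and $\lpd$, which are defined using the same class of covers just with a different quantifier order on $\delta$; I would remark on this. Combining, $\max\{h, \lid P\} \le \lid F$.

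The only mild subtlety — and the place I expect to spend a sentence justifying carefully — is that Lemma~\ref{samewithinlevel} as literally stated concerns the upper intermediate and $\Phi$-intermediate dimensions, so I need to point out that its proof transfers to the lower dimensions. This is immediate from the structure of that proof: it shows that any $[\delta,\delta^\theta]$-cover of $Q$ can be converted into a $[\delta,\delta^\theta]$-cover of $P$ at the \emph{same} scale $\delta$ with the $s$-cost multiplied by the constant $A_{d,3}+M$, and this scale-by-scale statement is exactly what is needed to compare $\lid$ (where one only needs suitable covers along some sequence $\delta \to 0$) as well as $\uid$. The same remark covers the $\Phi$-intermediate case for part~\eqref{lowerphi}, where one additionally invokes Lemma~\ref{phiinvertible} to reduce to invertible $\Phi$, exactly as in the proof of Theorem~\ref{mainint}\eqref{phieq}. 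With these points in place the corollary follows with no further computation.

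\begin{proof}
We prove \eqref{lowerint}; then \eqref{lowerbox} is the case $\theta = 1$, and \eqref{lowerphi} follows by the same argument with $\lid$, $\uid$, $\delta^\theta$ replaced by $\lpd$, $\upd$, $\Phi^{-1}(\delta)$ respectively (using Lemma~\ref{phiinvertible} to reduce to invertible $\Phi$, as in the proof of Theorem~\ref{mainint}\eqref{phieq}).

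The upper bound is immediate: by the general inequalities in Section~\ref{definedimsec}, $\lid F \leq \uid F$, and $\uid F = \max\{h,\uid P\}$ by Theorem~\ref{mainint}\eqref{inteq}.

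For the lower bound, first note that $h = \dim_\mathrm{H} F$ by~\cite[Theorem~3.15]{Mauldin1996}, and $\dim_\mathrm{H} F \leq \lid F$ always holds, so $h \leq \lid F$. Next, let $P_1 \coloneqq \{ \, x \in X : x = S_i(x) \textup{ for some } i \in I \, \}$, so that $P_1 \subseteq F$ and $P_1$ intersects $S_i(X)$ in exactly one point for each $i \in I$. For $\theta = 0$ we have $\lid P = \lid P_1 = \dim_\mathrm{H} P = 0$ since $P$ and $P_1$ are countable; for $\theta \in (0,1]$, the Claim in the proof of Lemma~\ref{samewithinlevel} shows that any cover $\{U_j\}$ of $P_1$ (resp.\ $P$) with $\delta \leq |U_j| \leq \delta^\theta$ yields a cover $\{V_m\}$ of $P$ (resp.\ $P_1$) with $\delta \leq |V_m| \leq \delta^\theta$ and $\sum_m |V_m|^s \leq (A_{d,3}+M)\sum_j |U_j|^s$ for all $s \geq 0$ — a statement at the single scale $\delta$ — and this is precisely what is needed to conclude $\lid P = \lid P_1$ as well. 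Since $\lid$ is monotonic under inclusion and $P_1 \subseteq F$, we get $\lid P = \lid P_1 \leq \lid F$. Combining, $\max\{h,\lid P\} \leq \lid F$, as required.
\end{proof}
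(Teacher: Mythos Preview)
Your proof is correct and follows essentially the same approach as the paper's: upper bound via $\lid F \le \uid F$ and Theorem~\ref{mainint}\eqref{inteq}, lower bound via $h = \dim_\mathrm{H} F \le \lid F$ and $\lid P = \lid P_1 \le \lid F$ using the fixed-point set $P_1 \subseteq F$. The paper simply cites Lemma~\ref{samewithinlevel} for the equality $\lid P = \lid P_1$ without comment, whereas you correctly observe that the lemma is only stated for the upper dimensions and explain why its scale-by-scale Claim transfers to the lower dimensions; this extra sentence is a genuine clarification rather than a different argument.
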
 

\begin{proof}
We prove~\eqref{lowerint};~\eqref{lowerbox} and~\eqref{lowerphi} are similar. By Theorem~\ref{mainint}~\eqref{inteq} $\lid F \leq \uid F = \max\{h,\uid P\}$. If $P_1$ is the set of fixed points in $X$ of the maps $\{S_i\}_{i \in I}$ then by Lemma~\ref{samewithinlevel} we have $\lid P = \lid P_1 \leq \lid F$, and by~\cite[Theorem~3.15]{Mauldin1996}, $h = \dim_\mathrm{H} F \leq \lid F$, so $\max\{h,\lid P\} \leq \lid F$, as required. 
\end{proof} 

\begin{ques}
Are the bounds in Corollary~\ref{lowerintcor} sharp or can they be improved in general? 
\end{ques}

We remark that for infinite systems where finitely many of the maps have a parabolic fixed point, we can obtain information about the intermediate dimensions of the limit set by applying Theorem~\ref{mainint} to an `induced’ uniformly contracting CIFS (see \cite[Section~6]{Banaji2022assouad}). 

\subsection{An example and a first application}

In Proposition~\ref{lattice} we compute the intermediate dimensions of the inversion of the lattice $\{ 1^p,2^p,3^p,\ldots \}^d$ in the unit $d$-sphere in $\Rd$. The case $d=1$ is just the polynomial sequence sets $\{ 1^{-p},2^{-p},3^{-p},\ldots \}$ whose intermediate dimensions were calculated in~\cite[Proposition~3.1]{Falconer2020}. 

\begin{prop}\label{lattice}
For $d \in \N$ and $p \in (0,\infty)$ define \[ G_{p,d} \coloneqq \{ \, x/||x||^2 : x \in \{ 1^p,2^p,3^p,\ldots \}^d \, \}. \]
 Then for all $\theta \in [0,1]$, 
\[ \dim_{\, \theta} G_{p,d} = \frac{d\theta}{p + \theta}. \]
In particular the intermediate dimensions are continuous at $\theta=0$. 
\end{prop}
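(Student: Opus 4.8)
The plan is to prove, for each $\theta \in (0,1]$, the matching bounds $\uid G_{p,d} \le \frac{d\theta}{p+\theta}$ and $\lid G_{p,d} \ge \frac{d\theta}{p+\theta}$ by explicit covers, so that $\lid G_{p,d} = \uid G_{p,d} = \frac{d\theta}{p+\theta}$; the one-dimensional case is~\cite[Proposition~3.1]{Falconer2020} and the argument below is its natural higher-dimensional counterpart. At $\theta = 0$ the value is $0 = \dim_\mathrm{H} G_{p,d}$ since $G_{p,d}$ is countable, and since $\frac{d\theta}{p+\theta} \to 0$ as $\theta \to 0^+$ the formula is then continuous at $\theta=0$.

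Write $\iota(x) = x/\|x\|^2$ for the inversion and $\Lambda = \{1^p,2^p,\ldots\}^d$, so $G_{p,d} = \iota(\Lambda)$ accumulates only at $0$. First I would record some elementary geometry. The map $\iota$ is conformal with conformal factor $\|x\|^{-2}$, so on any annulus $\{x : R \le \|x\| \le 4R\}$ it is bi-Lipschitz with both constants comparable to $R^{-2}$. For an integer $m$, the block $\Lambda_m := \{n^p : n \in \{m,\ldots,2m\}\}^d$ consists of $\asymp m^d$ points which, within bounded multiplicative constants, form a regular grid of spacing $\asymp m^{p-1}$ inside a cube of side $\asymp m^p$, since $(n+1)^p - n^p \asymp n^{p-1} \asymp m^{p-1}$ for $n \asymp m$. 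Applying $\iota$, the set $G_m^* := \iota(\Lambda_m) \subseteq G_{p,d}$ is therefore, up to bounded distortion, a regular grid of $\asymp m^d$ points of spacing $\sigma_m \asymp m^{-p-1}$ lying in a ball of diameter $D_m \asymp m^{-p}$; in particular any set of diameter $r$ meets $\lesssim \max\{1, (r/\sigma_m)^d\}$ of its points (the count being capped by $\asymp m^d$ when $r \gtrsim D_m$). Also $\{\iota(x) : \max_i n_i > m\} \subseteq B(0, m^{-p})$.

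For the upper bound, fix $s > \frac{d\theta}{p+\theta}$, let $\delta$ be small, and split $G_{p,d}$ at a parameter $m$: cover the head $H_m := \{\iota(x) : 1 \le n_i \le m \text{ for all } i\}$, which has at most $m^d$ points, by a ball of diameter $\delta^{1/\theta}$ at each point, and cover the tail $\{\iota(x) : \max_i n_i > m\} \subseteq B(0,m^{-p})$ by $\asymp (m^{-p}/\delta)^d$ balls of diameter $\delta$. All covering sets have diameter in $[\delta^{1/\theta},\delta]$, and the associated sum is $\lesssim m^d \delta^{s/\theta} + m^{-pd}\delta^{s-d}$. Choosing $m$ to balance the two terms, i.e.\ $m^{d(p+1)} \asymp \delta^{\,s - d - s/\theta}$, makes this $\asymp \delta^{(s-d+ps/\theta)/(p+1)}$, whose exponent is strictly positive exactly because $s > \frac{d\theta}{p+\theta}$; hence the sum tends to $0$ as $\delta \to 0$, so $\uid G_{p,d} \le \frac{d\theta}{p+\theta}$ and a fortiori $\lid G_{p,d} \le \frac{d\theta}{p+\theta}$. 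One checks that the chosen $m$ satisfies $1 \ll m \lesssim \delta^{-1/p}$ throughout $\theta \in (0,1]$ and $p \in (0,\infty)$, so the tail covering is legitimate.

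For the lower bound, fix $s < \frac{d\theta}{p+\theta}$, take $m = m_\delta$ with $m^{d(p+1)} \asymp \delta^{\,s-d-s/\theta}$ as above (again $1 \ll m_\delta \lesssim \delta^{-1/p}$), and let $\{U_i\}$ be any cover of $G_{p,d}$ with $\delta^{1/\theta} \le |U_i| \le \delta$. Since $\{U_i\}$ covers $G_{m_\delta}^*$ and each $U_i$ meets $\lesssim \max\{1, (|U_i|/\sigma_{m_\delta})^d\}$ of its $\asymp m_\delta^d$ points, summing gives $\sum_i \max\{1, (|U_i|/\sigma_{m_\delta})^d\} \gtrsim m_\delta^d$. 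Splitting the indices according to whether $|U_i| \le \sigma_{m_\delta}$ or not, and using $|U_i|^s \ge (\delta^{1/\theta})^s$ in the first group and $|U_i|^s \ge \delta^{\,s-d}|U_i|^d$ (valid since $s < d$ and $|U_i| \le \delta$) in the second, one of the groups forces $\sum_i |U_i|^s \gtrsim \min\{m_\delta^d \delta^{s/\theta},\, m_\delta^{-pd}\delta^{\,s-d}\}$, which by the choice of $m_\delta$ is $\asymp \delta^{(s-d+ps/\theta)/(p+1)}$; now the exponent is negative, so this tends to $\infty$. Hence $\sum_i |U_i|^s \gtrsim 1$ for all small $\delta$ and all admissible covers, so $\lid G_{p,d} \ge s$, and letting $s \uparrow \frac{d\theta}{p+\theta}$ finishes the proof. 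The step I expect to be the main obstacle is the geometric bookkeeping behind the second paragraph — verifying carefully that the inversion of a dyadic lattice block really is comparable to a rescaled regular grid with the stated spacing and diameter, so that the ``meets $\lesssim \max\{1,(r/\sigma_m)^d\}$ points'' estimate holds uniformly in $r$ — together with pinning down the optimal split parameter $m$ and the side conditions $1 \ll m \lesssim \delta^{-1/p}$ so that both covers are valid across the full ranges of $\theta$ and $p$; this is exactly where conformality of $\iota$ enters.
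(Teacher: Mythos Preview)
Your proposal is correct. The upper bound is essentially the paper's argument after the reparametrisation $\delta \leftrightarrow \delta^{1/\theta}$: both split at a cut-off parameter, cover the finitely many ``head'' points by balls at the small admissible scale, and cover the ``tail'' near the origin at the large admissible scale; only the choice of cut-off differs (you optimise in $s$, the paper takes the $s$-free choice $n \asymp \delta^{-\theta/(p+\theta)}$).

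The lower bound, however, is a genuinely different route. The paper does \emph{not} argue directly on covers: it first shows $\lbd G_{p,d} \ge d/(p+1)$ by a short geometric separation/density count, then observes that $[0,1]^d$ is a weak tangent so $\dim_\mathrm{A} G_{p,d} = d$, and finally plugs these two numbers into the general lower bound $\lid F \ge \dfrac{\theta\, \dim_\mathrm{A} F\, \lbd F}{\dim_\mathrm{A} F - (1-\theta)\lbd F}$ from~\cite[Proposition~3.10]{Banaji2020}, which collapses to $\dfrac{d\theta}{p+\theta}$. Your approach is more elementary and self-contained: you exploit the approximate grid structure of a single dyadic block $G_m^*$ under inversion, turn the point-count into the pigeonhole estimate $\sum_i \max\{1,(|U_i|/\sigma_m)^d\} \gtrsim m^d$, and split on whether $|U_i| \lessgtr \sigma_m$. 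The trade-off is exactly what you anticipated: your method avoids importing Assouad dimension and the external interpolation lemma, at the cost of the geometric bookkeeping (bi-Lipschitz control of $\iota$ on annuli, uniform counting in the distorted grid) that the paper's high-level argument sidesteps. One small caveat on the upper bound: your balancing choice of $m$ only satisfies $m \lesssim \delta^{-1/p}$ when $s$ is taken close enough to $\tfrac{d\theta}{p+\theta}$ (specifically $s < \tfrac{d\theta}{p(1-\theta)}$ for $\theta<1$), which is harmless since you may let $s \downarrow \tfrac{d\theta}{p+\theta}$ at the end, but it is worth making explicit.
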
 

\begin{proof}
We begin with the upper bound. Let $\theta \in (0,1]$. 
For $\delta \in (0,1/10)$ let $n \coloneqq \lceil \delta^{-\theta/(p+\theta)} \rceil$. 
We form a cover $\mathcal{U}$ by covering each point in $\{ \, x/||x||^2 : x \in \{ 1^p,2^p,\ldots n^p\}^d \, \}$ with a ball of diameter $\delta$, and covering $[0,n^{-p}]^d$ with $\lesssim (n^{-p}/\delta^\theta + 1)^d$ sets of diameter $\delta^\theta$, where $\lesssim$ means up to a multiplicative constant independent of $\delta$ and $n$. Then for $s > \frac{d\theta}{p + \theta}$, 
\begin{align*}
 \sum_{U \in \mathcal{U}} |U|^s &\lesssim (n^{-p}/\delta^\theta + 1)^d \delta^{\theta s} + n^d \delta^s \\
 &\lesssim \delta^{dp\theta/(p+\theta)}\delta^{-d \theta}\delta^{\theta s} + \delta^{\theta s} + \delta^{-d\theta/(p+\theta)}\delta^s \\
 &\lesssim \delta^{\theta(s-d\theta/(p+\theta))} + \delta^{s-d\theta/(p+\theta)} \\
 &\lesssim 1,
 \end{align*}
proving $\uid G_{p,d} \leq s$. 

For the lower bound, for $\delta \in (0,1/10)$, let $m \coloneqq \lceil \delta^{-1/(p+1)} \rceil$. 
A direct geometric argument shows that $\{ \, x/||x||^2 : x \in \{ 1^p,2^p,\dotsc, m^p\}^d \, \}$ is a $\gtrsim \delta$-separated set, so if $0 < p < 1$ then 
\[ N_\delta (G_{p,d} ) \geq N_\delta (\{ \, x/||x||^2 : x \in \{ 1^p,2^p,\dotsc, m^p\}^d \, \}) \gtrsim m^d \geq \delta^{-d/(p+1)}.\]
A geometric argument shows that for each $\eta > 0$, 
\begin{equation*}
\sup_{x \in [0,m^{-p}]^d} \inf_{y \in G_{p,d}} ||x-y|| \lesssim \delta^{1-\eta},
\end{equation*} 
so if $p \geq 1$ then 
\[ N_\delta (G_{p,d}) \geq N_\delta (G_{p,d} \cap [0,m^{-p}]^d ) \gtrsim \delta^{d\eta} (m^{-p}/\delta)^d \gtrsim \delta^{-(d/(p+1) - d\eta)}. \]
Therefore $\lbd G_{p,d} \geq d/(p+1)$ for all $p \in (0,\infty)$. 
Moreover, for all $\eta > 0$ sufficiently small, 
\[ N_{\delta^{1-\eta}} ([0,m^{-p}] \cap G_{p,d}) \approx \left(\frac{m^{-p}}{\delta^{1-\eta}}\right)^d, \]
so $\dim_{\mathrm A} G_{p,d} = d$. 
By the general lower bound~\cite[Corollary~2.8]{Banaji2022moran}, 
\[ \lid G_{p,d} \geq \frac{\theta \dim_\mathrm{A} G_{p,d}\  \underline{\dim}_\mathrm{B} G_{p,d} }{\dim_\mathrm{A} G_{p,d}  - (1-\theta) \underline{\dim}_\mathrm{B} G_{p,d} } \geq \frac{d\theta d/(p+1)}{d - (1-\theta) d/(p+1)} = \frac{d\theta}{p+\theta},\]
completing the proof. 
\end{proof}

Continuity of the intermediate dimensions at $\theta = 0$ has been shown in~\cite{Burrell2021,Burrell2022brownian} to have powerful consequences. We now use Proposition~\ref{lattice} to apply Burrell, Falconer and Fraser's result~\cite[Corollary~6.4]{Burrell2021} to give an application of Theorem~\ref{mainint} to orthogonal projections. Dimension theory of orthogonal projections has a long history in fractal geometry, see~\cite{Falconer2015:FractalsV,Shmerkin2015:FractalsV}. Recently there has been particular interest in orthogonal projections of dynamically defined sets, where one can often obtain more precise information than is provided by the general projection theorems, see~\cite{Hochman2012,Shmerkin2015:FractalsV}.  The following example falls into this category.  

\begin{example}\label{proj}
Let $p>0$ and consider a set of contracting similarity maps on $\mathbb{R}^2$ with fixed points lying in the set $G_{p,2}$ from Proposition~\ref{lattice}, with no two maps having the same fixed point. Assume the contraction ratios are small enough that the system forms a CIFS, with limit set $F$, say, and small enough that $\dim_\mathrm{H} F < 1$. Then by Theorem~\ref{mainint} and Corollary~\ref{lowerintcor}, 
\[ \dim_{\, \theta} F = \max\left\{\dim_\mathrm{H} F, \frac{2\theta}{p+\theta} \right\},\]
which is continuous at $\theta=0$. Therefore by~\cite[Corollary~6.4]{Burrell2021} and \cite[Theorem~1.8]{Falconer2020projections} there exists $c<1$ such that $\ubd \pi(F) \leq c$ for every orthogonal projection $\pi \colon \mathbb{R}^2 \to \mathbb{R}$, and $\ubd \pi(F) = c$ for almost every orthogonal projection $\pi$ (with respect to the natural measure). This conclusion is perhaps most interesting when $p$ is very close to 0 (and so $\bd F$ is very close to 2) and $\dim_\mathrm{H} F$ is very close to 1. 
\end{example}
More generally, if $1 \leq n < d$ are integers and the contraction ratios lie on $G_{p,d}$ and $\dim_\mathrm{H} F < n$, then $\ubd \pi(F) < n$ for every orthogonal projection $\pi \colon \Rd \to \mathbb{R}^n$.

\section{Continued fraction sets}\label{ctdfracsect}

\subsection{Real continued fractions}

In this section we apply Theorem~\ref{mainint} to give information about sets of irrational numbers whose continued fractions have restricted entries, as in the following definition. 

\begin{defn}\label{ctdfracdefn}
For a non-empty, proper subset $I \subset \N$, define 
\[ F_I \coloneqq \left\{ \, z \in (0,1) \setminus \mathbb{Q} : z = \frac{1}{b_1 + \frac{1}{b_2 + \frac{1}{\ddots}}}, b_n \in I \mbox{ for all } n \in \N \, \right\}. \]
\end{defn}

The following lemma shows why our general results can be applied in this setting. 

\begin{lma}
Working in $\mathbb{R}$, letting $X \coloneqq [0,1]$ and $V \coloneqq (-1/8,9/8)$, 
\begin{enumerate}
\item\label{ctdfracnot1} If $1 \notin I$ then $\{ \, S_b(x) \coloneqq 1/(b+x) : b \in I \, \}$ is a CIFS with limit set $F_I$. 
\item\label{ctdfrac1} If $1 \in I$ then $\{ \, S_b(x) \coloneqq 1/(b+x) : b \in I, b \neq 1 \, \} \cup \left\{ \, S_{1b}(x) \coloneqq \frac{1}{b+\frac{1}{1+x}} : b \in I \, \right\}$ is a CIFS with limit set $F_I$. 
\end{enumerate}
\end{lma}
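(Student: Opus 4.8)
The statement to prove is that the continued fraction maps $S_b(x) = 1/(b+x)$ (appropriately grouped when $1 \in I$) form a CIFS on $X = [0,1]$ with the ambient open set $V = (-1/8, 9/8)$, and that the limit set is exactly $F_I$. The plan is to verify each of the four defining properties of a CIFS in Definition~\ref{cifs} in turn, then identify the limit set. First I would treat part~\eqref{ctdfracnot1}, where $1 \notin I$, since the maps $S_b$ with $b \geq 2$ satisfy $S_b([0,1]) = [1/(b+1), 1/b] \subseteq [0,1/2] \subset [0,1] = X$, and the cone condition~\eqref{cone} is immediate because $X = [0,1]$ is convex, so $\mathcal{L}^1(B(x,r) \cap \mathrm{Int}\, X)/r \geq 1/2$ for all $x \in X$, $r \in (0,1)$. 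For conformality~\eqref{conformal}, each $S_b$ extends to a $C^\infty$ (hence $C^{1+\epsilon}$) map on $V = (-1/8, 9/8)$ — one needs $b + x \neq 0$ on $V$, which holds since $b \geq 2$ and $x > -1/8$; the derivative $S_b'(x) = -1/(b+x)^2$ is nonzero there, and in one dimension any nonzero linear map is trivially a similarity, and $x \mapsto S_b'(x)$ is smooth hence $\epsilon$-Hölder. The uniform contraction bound $\|S_b'\| < \rho < 1$ on $V$ holds with, say, $\rho = (15/8)^{-2} \cdot$(something)$< 1$: the largest value of $|S_b'(x)| = (b+x)^{-2}$ over $b \geq 2$, $x \in (-1/8, 9/8)$ is attained at $b=2$, $x = -1/8$, giving $(15/8)^{-2} = 64/225 < 1$, so we may take $\rho = 64/225$.

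The bounded distortion property~\eqref{bdp} is the one genuinely substantive check. Here the plan is to invoke the classical fact — going back to the theory of continued fractions — that for the Gauss-type maps there is a uniform constant $K$ with $|S_w'(y)| \leq K |S_w'(x)|$ for all $x,y \in X$ and all finite words $w$. The cleanest route is to use the chain rule $S_w'(x) = \prod_{k} S_{w_k}'(S_{w_{k+1}\cdots w_n}(x))$ together with the bound $|\log|S_b'(u)| - \log|S_b'(v)|| = 2|\log(b+u) - \log(b+v)| \leq 2|u-v|/(b + \min(u,v))$, and then the fact that consecutive images $S_{w_{k+1}\cdots w_n}(x)$ and $S_{w_{k+1}\cdots w_n}(y)$ are exponentially close (contraction ratio $\leq \rho^{n-k}$), so that $\sum_k |\log|S_{w_k}'(\cdot)|_x - \log|S_{w_k}'(\cdot)|_y|$ is bounded by a convergent geometric series independent of $w$. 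Exponentiating gives $K := \exp(\text{const})$. For the OSC~\eqref{osc} with $U = (0,1)$: $S_b(U) = (1/(b+1), 1/b) \subset (0,1)$, and the intervals $(1/(b+1), 1/b)$ for distinct $b$ are pairwise disjoint (their closures meet only at endpoints), so $S_b(U) \cap S_{b'}(U) = \varnothing$ for $b \neq b'$.

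For part~\eqref{ctdfrac1}, where $1 \in I$, the issue is that $S_1([0,1]) = [1/2, 1]$, which touches the point $1$; worse, $S_1'(x) = -1/(1+x)^2$ has $|S_1'(0)| = 1$, so $S_1$ is not a uniform contraction on $X$ (let alone on $V$). The remedy, as the statement indicates, is to replace the single map $S_1$ by the family of compositions $S_{1b} = S_1 \circ S_b$ for $b \in I$; one checks $S_{1b}([0,1]) \subseteq S_1([1/(b+1),1/b]) \subseteq S_1([0,1/2]) = [2/3, 1] \subset [0,1]$ wait — more carefully $S_{1b}$ maps into a compact subinterval bounded away from $0$, and crucially $\|S_{1b}'\| = \|S_1'\|\cdot\|S_b'\| < 1\cdot\rho' $ where the inner map $S_b$ (with $b \geq 1$, but now the argument lies in $S_1(\text{stuff})$...) — the point is that the composition contracts because $S_1$ contracts strictly on the relevant smaller region and the constants multiply, so after re-grouping one gets a genuine uniform bound $\|S_{1b}'\| < \rho < 1$ on a suitable $V$. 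All four CIFS properties for the new (countable) system then follow from the already-verified properties of the $S_b$'s plus the fact that compositions of conformal maps with bounded distortion again have bounded distortion. Finally, for the limit set in both cases: by definition $F = \pi(I^{\mathbb N})$ (or the analogous index set), and $\pi(w) = \bigcap_n S_{w|_n}([0,1])$ is precisely the real number with continued fraction expansion $[0; b_1, b_2, \ldots]$ where the $b_i$ are read off from $w$; regrouping $S_1 \circ S_b$ as $S_{1b}$ does not change which continued fraction expansions are realised, so in both cases the limit set equals $F_I$ exactly. The main obstacle is the bounded distortion estimate~\eqref{bdp}; everything else is a routine — if slightly fiddly, because of the $1 \in I$ case — verification of inclusions and derivative bounds.
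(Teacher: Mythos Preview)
Your approach is correct and is in fact considerably more detailed than the paper's own ``proof,'' which consists entirely of a citation: part~\eqref{ctdfracnot1} is referred to \cite[page~4997]{Mauldin1999}, and part~\eqref{ctdfrac1} is asserted to follow similarly, with only the observation that $|S_1'(0)| = 1$ forces the regrouping. So there is nothing to compare at the level of strategy --- you have supplied the verification that the paper outsources.

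A few remarks on the execution. Your treatment of \eqref{osc}, \eqref{cone}, and \eqref{conformal} in part~\eqref{ctdfracnot1} is clean; do not forget also to check $S_b(V) \subseteq V$, which is part of the conformality axiom (for $b \geq 2$ one has $S_b(V) \subset (0,8/15) \subset V$). Your bounded distortion argument via the chain rule and the geometric series $\sum_k \rho^{n-k}$ is the standard one and is fine; the key numerical input is $b + \min(u,v) \geq 2$ for $b \geq 2$ and $u,v \in [0,1]$.

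Your discussion of part~\eqref{ctdfrac1} is somewhat loose --- the line ``$\|S_{1b}'\| = \|S_1'\| \cdot \|S_b'\|$'' is not literally an equality, and you waver on which factor is inner. It helps to compute directly: interpreting $S_{1b} = S_1 \circ S_b$, one finds $|S_{1b}'(x)| = 1/(b+1+x)^2$, so on $V$ the supremum is again at most $(8/15)^2 = 64/225$, and indeed the bounded distortion estimate for the regrouped alphabet reduces to the same inequality already proved, since every map in the new system has derivative of the form $1/(c+x)^2$ with $c \geq 2$. The OSC for the regrouped system follows because $S_b(U) \subset (0,1/2)$ for $b \geq 2$ while $S_{1b}(U) \subset (1/2,1)$, and the $S_{1b}(U)$ are pairwise disjoint as images of disjoint intervals under the injective map $S_1$. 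Your identification of the limit set via greedy parsing of the continued fraction expansion is correct.
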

\begin{proof}
\eqref{ctdfracnot1} is verified in~\cite[page~4997]{Mauldin1999}, and~\eqref{ctdfrac1} can be verified similarly, noting that when $1 \in I$ the different CIFS is needed to ensure that the system is uniformly contractive, because the derivative of $x \to 1/(1+x)$ at $x=0$ is $-1$. 
\end{proof}

It follows from~\cite[Theorem~3.15]{Mauldin1996} that $\dim_\mathrm{H} F_I = h$; the Hausdorff dimension of such limit sets has been studied in~\cite{Mauldin1999,Kessebohmer2006,Heinemann2002,Ingebretson2020,Chousionis2020} %
 and other works. It follows from~\cite[Theorem~2.11]{Mauldin1999} that $\ubd F_I = \max\{h,\ubd\{ \, 1/b : b \in I \, \}\}$. In Theorem~\ref{ctdfracintthm} we apply Theorem~\ref{mainint} to give information about the intermediate dimensions of $F_I$. 

\begin{thm}\label{ctdfracintthm}
Using the notation in Definition~\ref{ctdfracdefn}, for any non-empty proper $I \subset \N$, 
\begin{enumerate}
\item\label{ctdfracint} For all $\theta \in [0,1]$ we have 
\begin{gather*}
 \uid F_I = \max\{h,\uid\{ \, 1/b : b \in I \, \}\}; \\*
 \max\{h,\lid\{ \, 1/b : b \in I \, \}\} \leq \lid F_I \leq \max\{h,\uid\{ \, 1/b : b \in I \, \}\}.
 \end{gather*}
\item\label{ctdfraccts} The maps $\theta \mapsto \uid F_I$ and $\theta \mapsto \lid F_I$ are continuous at $\theta = 0$.
\end{enumerate}
\end{thm}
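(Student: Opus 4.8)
The plan is to obtain part~\eqref{ctdfracint} almost immediately from Theorem~\ref{mainint} and Corollary~\ref{lowerintcor} once a set meeting each cylinder exactly once has been identified, and then to deduce the continuity statement~\eqref{ctdfraccts} by comparing $\{\,1/b:b\in I\,\}$ with the full harmonic sequence, whose intermediate dimensions are given by Proposition~\ref{lattice}.

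For part~\eqref{ctdfracint}, first take the case $1\notin I$, so $\{\,S_b:b\in I\,\}$ is a CIFS with limit set $F_I$ and cylinders $S_b(X)=[1/(b+1),1/b]$. I would let $P$ be the set of fixed points of the $S_b$: the fixed point of $S_b$ is $x_b=\tfrac12(-b+\sqrt{b^2+4})$, which is irrational and lies in the interior $(1/(b+1),1/b)$ of $S_b(X)$; by the open set condition it lies in no other cylinder interior, and it cannot lie on the (rational) boundary of another cylinder, so $P$ meets each $S_b(X)$ in exactly one point. Writing $x_b=g(1/b)$ with $g(y)\coloneqq\frac{-1+\sqrt{1+4y^2}}{2y}$ for $y>0$ and $g(0)\coloneqq0$, a short computation gives $g'(y)=\frac{\sqrt{1+4y^2}-1}{2y^2\sqrt{1+4y^2}}>0$ on $(0,1]$ and shows that $g$ extends to a $C^1$ map on $[0,1]$ with $g'(0)=1$; hence $g$ is a bi-Lipschitz homeomorphism of $[0,1]$ onto its image, and since intermediate dimensions are bi-Lipschitz invariant, $\uid P=\uid\{\,1/b:b\in I\,\}$ and $\lid P=\lid\{\,1/b:b\in I\,\}$ for all $\theta$. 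Theorem~\ref{mainint}~\eqref{inteq} then gives $\uid F_I=\max\{h,\uid\{\,1/b:b\in I\,\}\}$ and Corollary~\ref{lowerintcor}~\eqref{lowerint} gives the two-sided bound for $\lid F_I$. When $1\in I$ the same argument applies to the modified CIFS from the preceding lemma, whose fixed-point set is $\{x_b:b\in I\setminus\{1\}\}\cup\{y_b:b\in I\}$ with $y_b=\tfrac12(-1+\sqrt{1+4/b})=g_2(1/b)$ the fixed point of $S_{1b}$ and $g_2(y)=\frac{-1+\sqrt{1+4y}}{2}$ again bi-Lipschitz on $[0,1]$; removing the single point $1/1$ and using that $\uid$ is finitely stable and monotonic gives $\uid P=\uid\{\,1/b:b\in I\,\}$ and $\lid P\ge\lid\{\,1/b:b\in I\,\}$, which is enough.

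For part~\eqref{ctdfraccts}, since $I\subseteq\N$ we have $\{\,1/b:b\in I\,\}\subseteq\{\,1/n:n\in\N\,\}=G_{1,1}$, so monotonicity of $\uid$ and Proposition~\ref{lattice} (with $d=p=1$) give $\uid\{\,1/b:b\in I\,\}\le\uid G_{1,1}=\frac{\theta}{1+\theta}$ for every $\theta\in[0,1]$. Hence by part~\eqref{ctdfracint}, $\uid F_I\le\max\{h,\tfrac{\theta}{1+\theta}\}\to\max\{h,0\}=h$ as $\theta\to0^+$. Since also $\uid F_I\ge h$ for every $\theta$, since $\theta\mapsto\uid F_I$ is monotonic (so the limit at $0^+$ exists), and since $\overline{\dim}_{\,0}F_I=\dim_\mathrm{H}F_I=h$ by~\cite[Theorem~3.15]{Mauldin1996}, it follows that $\lim_{\theta\to0^+}\uid F_I=h=\overline{\dim}_{\,0}F_I$, i.e.\ $\uid F_I$ is continuous at $0$. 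For $\lid F_I$ we likewise have $h\le\lid F_I\le\uid F_I\le\max\{h,\tfrac{\theta}{1+\theta}\}$ for $\theta\in(0,1]$, so letting $\theta\to0^+$ and using $\underline{\dim}_{\,0}F_I=\dim_\mathrm{H}F_I=h$ gives continuity of $\lid F_I$ at $0$ by the squeeze theorem.

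The only genuinely fiddly step is the one flagged above: the set $\{\,1/b:b\in I\,\}$ may itself meet two adjacent cylinders at a common endpoint when $I$ contains consecutive integers, so it need not be admissible as the set $P$ in Theorem~\ref{mainint}; replacing it by the fixed-point set, checking the bi-Lipschitz comparison, and handling the modified CIFS when $1\in I$ is where the work lies, but all the dimension-theoretic substance is already contained in Theorem~\ref{mainint} and Proposition~\ref{lattice}.
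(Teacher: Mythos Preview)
Your proposal is correct, and for part~\eqref{ctdfraccts} it is essentially the same as the paper's proof (the paper cites \cite[Proposition~3.1]{Falconer2020} rather than Proposition~\ref{lattice}, but these give the same formula $\theta/(1+\theta)$ for the harmonic sequence).

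For part~\eqref{ctdfracint} you take a genuinely different---and more careful---route than the paper. The paper simply sets $P=\{\,S_b(0):b\in I\,\}=\{\,1/b:b\in I\,\}$ (and the analogous set of images of $0$ under the modified CIFS when $1\in I$) and applies Theorem~\ref{mainint} directly, without the bi-Lipschitz detour. Your observation in the final paragraph is well taken: when $I$ contains consecutive integers $b,b+1$, the point $1/(b+1)$ lies in both $S_b(X)=[1/(b+1),1/b]$ and $S_{b+1}(X)$, so the paper's $P$ does not literally satisfy the ``exactly one point per cylinder'' hypothesis of Theorem~\ref{mainint}. By passing to the fixed points (which lie in the interiors of the cylinders and are irrational) and then checking bi-Lipschitz equivalence with $\{1/b\}$, you avoid this entirely. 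The gap in the paper's argument is minor---one can either observe that the proof of Lemma~\ref{samewithinlevel} goes through unchanged when $P$ meets each cylinder in at most a uniformly bounded number of points (here two), or take images of an interior point of $X$ instead of $0$---but your version is cleaner as written.

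One small caveat in the case $1\in I$: your computation of $y_b$ uses the displayed formula $S_{1b}(x)=\frac{1}{b+\frac{1}{1+x}}$ from the preceding lemma, but this appears to be a typo in the paper for $\frac{1}{1+\frac{1}{b+x}}$ (only the latter is consistent with $S_{1b}=S_1\circ S_b$ and with the OSC, and it is the formula implicitly used in the paper's own proof of Theorem~\ref{ctdfracintthm}). With the corrected map the fixed point is $y_b=\tfrac12(-b+\sqrt{b^2+4b})$ rather than your $\tfrac12(-1+\sqrt{1+4/b})$, but the same bi-Lipschitz argument carries over with a different choice of $g_2$.
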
 

\begin{proof}

\eqref{ctdfracint} \textbf{Case 1:} Assume $1 \notin A$. Then the result follows from Theorem~\ref{mainint}~\eqref{inteq} and Corollary~\ref{lowerintcor}~\eqref{lowerint} if we take $P=\{ \, S_b(0) : b \in I \, \} = \{ \, 1/b : b \in I \, \}$. 

\textbf{Case 2:} Assume $1 \in A$. Since the map $x \mapsto 1/(1+x)$ is bi-Lipschitz on $[0,1]$ and $\uid$ is stable under bi-Lipschitz maps, 
\[\uid \left\{ \, \frac{1}{1+\frac{1}{b}} \, \right\} = \uid \{ \, 1/b : b \in I \, \}.\]
 Since the removal of finitely many points from a set does not change its dimension, \[\uid \{ \, 1/b : b \in I, b \neq 1 \, \} = \uid \{ \, 1/b : b \in I \, \}.\] It is clear from the definition that $\uid$ is finitely stable, so the equality for $\uid F_I$ follows from Theorem~\ref{mainint}~\eqref{inteq} if we take 
\[ P \coloneqq \left\{ \, \frac{1}{1+\frac{1}{b}} \, \right\} \cup \{ \, 1/b : b \in I, b \neq 1 \, \}.\] 
The lower bound holds since 
\[ \max\{h,\lid\{ \, 1/b : b \in I \, \}\} = \max\{ \dim_\mathrm{H} F_I, \lid \{ \, 1/b : b \in I, b \neq 1 \, \} \} \leq \lid F_I \]
by~\cite[Theorem~3.15]{Mauldin1996}. 

\eqref{ctdfraccts} For all $\theta \in (0,1]$, 
\begin{align*}
 \uid F_I &= \max\{h,\uid\{ \, 1/b : b \in I \, \}\} 
 &&\text{by \eqref{ctdfracint}} \\
 &\leq \max\{ h,\uid\{ \, 1/b : b \in \N \, \}\} \\
  &= \max\{ h, \theta/(1+\theta) \} &&\text{by \cite[Proposition~3.1]{Falconer2020}} \\ 
 &\xrightarrow[\theta \to 0^+]{} \max\{ h, 0 \} = h = \dim_\mathrm{H} F_I = \overline{\dim}_{\,0} F_I &&\text{by \cite[Theorem~3.15]{Mauldin1996}},
 \end{align*}
 so $\theta \mapsto \uid F_I$ is continuous at $\theta = 0$. Since $\dim_\mathrm{H} F_I \leq \lid F_I \leq \uid F_I$ for all $\theta \in [0,1]$ it follows that $\theta \mapsto \lid F_I$ is also continuous at $\theta = 0$. 
\end{proof}

The intermediate dimensions of the limit set of a CIFS will not generally be continuous at $\theta = 0$. For example, if the fixed points are arranged to be at $\frac{1}{\log n}$ then by Falconer, Fraser and Kempton's~\cite[Example~1]{Falconer2020}, the intermediate dimensions of the fixed points will be 1 for all $\theta \in (0,1]$, but the contraction ratios can be made small enough (and tending to 0 rapidly enough) so that $h < 1$. In this case, the limit set and its closure will have the same Hausdorff dimension as they differ by a countable set (namely the images of the point 0 under maps corresponding to finite words) by Mauldin and Urbański's~\cite[Lemma~2.1]{Mauldin1996}. Therefore by Banaji's~\cite[Theorem~6.1]{Banaji2023gen} the $\Phi$-intermediate dimensions can be used to `recover the interpolation' between the Hausdorff and box dimensions of the limit set $F$ in the sense that for all $s \in [h,1]$ there exists an admissible function $\Phi_s$ with $\dim^{\Phi_s} F = s$.  Theorem~\ref{mainint}~\eqref{phieq} can help to find these functions. 
It is possible for the intermediate dimensions to be discontinuous at $\theta = 0$ even when the box dimension is less than 1. Indeed, consider a countable compact subset $P \subset \mathbb{R}$ with box and Assouad dimension equal and strictly between 0 and 1, so by~\cite[Proposition~2.4]{Falconer2020}, $\dim_{\, \theta} P = \dim_\mathrm{B} P$ for all $\theta \in (0,1)$. We may then choose a set of similarity maps whose fixed points form the set $P$ and whose contraction ratios are small enough that the system forms a CIFS with the Hausdorff dimension of the limit set being smaller than $\dim_\mathrm{B} P$. Then the intermediate dimensions of the limit set will be discontinuous at $\theta = 0$ by Theorem~\ref{mainint}. 

The following example is similar to~\cite[Theorem~6.2]{Mauldin1999}; we consider a nice family of subsets $I$ which result in the upper and lower intermediate dimensions coinciding. 

\begin{example}\label{ctdfracex}
Fix $p>1$ and define 
\[ I_{p,l} \coloneqq \{ \, \lfloor n^p \rfloor : n \geq l \} \quad \mbox{for} \quad l \in \N, l \geq 2.\]
 Then $I_{p,l}$ is bi-Lipschitz equivalent to a cofinite subset of $\{ \, i^{-p} : i \in \N \, \}$, so 
 \[ \dim_{\, \theta} \{ \, 1/b : b \in I_{p,l} \, \} = \dim_{\, \theta} \{ \, i^{-p} : i \in \N \, \} = \frac{\theta}{p+\theta}\]
 for $\theta \in [0,1]$ by~\cite[Proposition~3.1]{Falconer2020}. Therefore the intermediate dimensions of the continued fraction set are 
  \[ \dim_{\, \theta} F_{I_{p,l}} = \max\left\{\dim_{\mathrm{H}} F_{I_{p,l}},\frac{\theta}{p+\theta} \right\}\] by the bounds in Theorem~\ref{ctdfracintthm}~\eqref{ctdfracint}, which coincide in this case. Recall that it was shown in~\cite[Section~3]{Mauldin1996} that $\theta_S = \inf\{ \, t > 0 : \psi_1(t) < \infty \, \}$. But there exists $C \geq 1$ such that $1/(Cb^2) \leq ||S_b'|| \leq C/b^2$ for all $b \in \N$. Therefore, since  
  \[ \sum_{n=1}^\infty ((n^p)^{-2})^{1/(2p)} = \sum_{n=1}^\infty n^{-1} = \infty;\] 
  \[ \sum_{n=1}^\infty ((n^p)^{-2})^s = \sum_{n=1}^\infty n^{-2p/s} < \infty \quad \mbox{for all} \quad s>1/(2p), \]
   it follows that the finiteness parameter $\theta_{I_{p,l}} = 1/(2p)$ and so $\dim_{\mathrm{H}} F_{I_{p,l}} > 1/(2p)$. But in~\cite[Theorem~1.5]{Mauldin1999} Mauldin and Urbański showed that $\theta_{I_{p,l}}$ is the infimum of the Hausdorff dimension of cofinite subsystems, so $\dim_{\mathrm{H}} F_{I_{p,l}} \to 1/(2p)$ as $l \to \infty$. Since $1/(2p) < 1/(p+1) = \dim_{\mathrm{B}} \{ \, 1/b : b \in I_{p,l} \, \}$, there exists $q \in \N$ such that 
   \[ \dim_{\mathrm{H}} F_{I_{p,l}} < \dim_{\mathrm{B}} F_{I_{p,l}} = 1/(p+1) \quad \mbox{for all} \quad l \geq q. \] 
The graph of the intermediate dimensions in the case $p=2$ and $l$ large enough that $1/4 < \dim_{\mathrm{H}} F_{I_{p,l}} < \dim_{\mathrm{B}} F_{I_{p,l}} = 1/3$ is the black curve in Figure~\ref{fig:holder}. This example shows that Theorems~\ref{mainint} and \ref{ctdfracintthm} provide examples of sets for which the intermediate dimensions are constant and equal to the Hausdorff dimension until a phase transition, and then strictly increasing, concave and analytic; this form has not previously been seen in `natural' sets. The fact that the phase transition takes place at $\theta = \frac{2h}{1-h}$ means that for the dimension to increase above the Hausdorff dimension, the sizes of the covering sets need to be restricted to lie in intervals that are smaller than $[\delta,\delta^{\frac{2h}{1-h}}]$. Note that the graph of the intermediate dimensions is neither convex on the whole domain nor concave on the whole domain. 
\end{example}

The intermediate dimensions can also be used to give information about the H{\"o}lder distortion of maps between continued fraction sets (see~\cite[Section~17.10]{Fraser2020} for discussion of the H{\"o}lder mapping problem in the context of dimension theory). 
In~\cite[Section~14.2.1 5.]{Falconer2021-2} Falconer noted that if $F \subset \Rd$ is non-empty and bounded and $f \colon F \to \Rd$ is an $\alpha$-H{\"o}lder map for some $\alpha \in (0,1]$, meaning that there exists $c>0$ such that $||f(x)-f(y)|| \leq c||x-y||^\alpha$ for all $x,y \in F$, then 
\begin{equation}\label{generalholderint} \uid f(F) \leq \alpha^{-1} \uid F 
\end{equation}
 and $\lid f(F) \leq \alpha^{-1} \lid F$ (see~\cite[Theorem~3.1]{Burrell2022brownian} and~\cite[Section~4]{Banaji2023gen} for further H{\"o}lder distortion estimates). 
Example~\ref{holderint} is the first time that the intermediate dimensions for $\theta \in (0,1)$ have been observed to give better information about H{\"o}lder exponents than either the Hausdorff or box dimensions (for example in the case of elliptical polynomial spirals~\cite{Burrell2022spirals} the box dimension gives the best information). Recently, Banaji and Kolossv\'ary showed that the intermediate dimensions can also give better information than the Hausdorff or box dimensions for the H\"older distortion of certain Bedford--McMullen carpets, see~\cite[Example~2.12]{Banaji2021bedford}. 
Fraser~\cite{Fraser2019-4} showed that for the spiral winding problem, another spectrum of dimensions (the Assouad spectrum) gives better information about H{\"o}lder exponents than has been obtained from either of the two dimensions (the upper box and Assouad dimensions) that it interpolates between. 

\begin{example}\label{holderint}

Let $p,q$ be such that $1<p<q < 2p-1 < \infty$. As in Example~\ref{ctdfracex} there exists $l \in \N$ large enough so that if $I_{p,l} \coloneqq \{ \, \lfloor n^p \rfloor : n \geq l \}$ then 
\[ 1/(2p) < h_p < 1/(q+1) < 1/(p+1)\]
 where $h_p$ is the Hausdorff dimension of the continued fraction set $\dim_\mathrm{H} F_{I_{p,l}}$, and then $\dim_{\, \theta} F_{I_{p,l}} = \max \left\{ h_p, \frac{\theta}{p+\theta}\right\}$. Similarly, if $I_q$ is any subset of $\N$ whose symmetric difference with $\{ \, \lfloor n^q \rfloor : n \in \N \}$ is finite then $\dim_{\, \theta} F_{I_q} = \max \left\{ h_q, \frac{\theta}{q+\theta} \right\}$, where $h_q \coloneqq \dim_\mathrm{H} F_{I_q}$. If $I_q$ is also such that $h_q \in \left( \frac{ph_p}{q-qh_p+ph_p},\frac{1}{q+1} \right)$ and $f \colon F_{I_q} \to \mathbb{R}$ is an $\alpha$-H{\"o}lder map such that $f(F_{I_q}) \supseteq F_{I_{p,l}}$ then~\eqref{generalholderint} gives the best upper bound for $\alpha$ when $\theta = \frac{qh_q}{1-h_q}$, when 
\[ \alpha^{-1} h_q = \alpha^{-1} \dim_{\, \theta} F_{I_q} \geq  \uid f(F_{I_q}) \geq \dim_{\, \theta} F_{I_{p,l}} = \frac{\theta}{p+\theta} = \frac{qh_q}{p-ph_q + qh_q},\]
and so \[ \alpha \leq \frac{p-ph_q + qh_q}{q}.\] 
Using the Hausdorff dimension merely gives that $\alpha \leq h_q/h_p$, and the box dimension merely gives $\alpha \leq \frac{p+1}{q+1}$. The intermediate dimensions of the two sets and the upper bound with a certain choice of parameters are plotted in Figure~\ref{fig:holder}. 
 
\end{example}

\begin{figure}[ht]
\center{\includegraphics[width=0.75\textwidth]
        {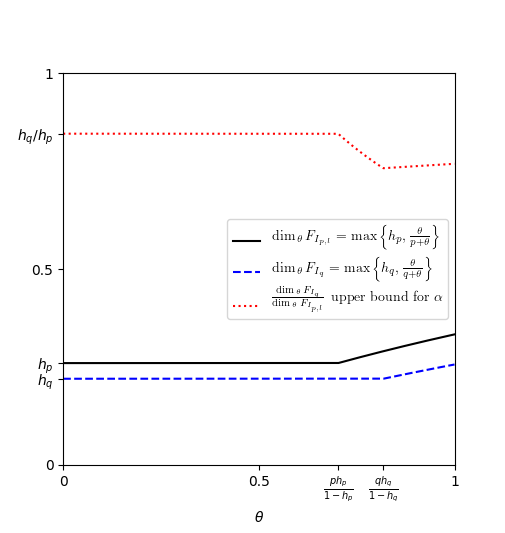}}
        \caption{\label{fig:holder}
        Graph of the intermediate dimensions of the real continued fraction sets in Example~\ref{holderint} and the upper bound for $\alpha$ against $\theta$ in the case $p=2$, $q=2.9$, $h_p \approx 0.26$, $h_q \approx 0.22$. 
 }
\end{figure}

Fractional Brownian motion is an important stochastic process, defined and studied in Kahane's classical text~\cite{Kahane1985}. 
Falconer~\cite{Falconer2021} has computed explicitly the intermediate dimensions of fractional Brownian images of certain sequence sets. 
In the following corollary of Theorem~\ref{ctdfracintthm} we apply results of Burrell~\cite{Burrell2022brownian} to give some consequences of the continuity of the intermediate dimensions of continued fraction sets for dimensions of images of $F_I$ under index-$\alpha$ fractional Brownian motion. Recall that $h = \dim_{\mathrm H} F_I$. Perhaps the most interesting part of Corollary~\ref{brownian} is the sufficiency of the condition $\alpha > h$ for the upper box dimension of the image to be strictly less than 1; this is an example of how the intermediate dimensions can be used to obtain information about the box dimension of sets. 

\begin{cor}\label{brownian}
Let $\alpha \in (0,1)$ and let $B_\alpha \colon \mathbb{R} \to \mathbb{R}$ denote index-$\alpha$ fractional Brownian motion. Then for any non-empty, proper subset $I \subset \N$, 
\begin{enumerate}
\item\label{browncts} The maps $\theta \mapsto \uid B_\alpha(F_I)$ and $\theta \mapsto \lid B_\alpha(F_I)$ are almost surely continuous at $\theta = 0$. %
\item\label{brownlarge} If $\alpha > h$ then almost surely 
\[ h/\alpha = \dim_\mathrm{H} B_\alpha(F_I) \leq \ubd B_\alpha(F_I) < 1.\]  
\item\label{brownsmall} If $\alpha \leq h$ then almost surely 
\[ \dim_\mathrm{H} B_\alpha(F_I) = \dim_{\mathrm{B}} B_\alpha(F_I) = 1.\] 
\end{enumerate}
\end{cor}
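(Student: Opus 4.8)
The plan is to deduce all three parts from Theorem~\ref{ctdfracintthm} together with known results of Burrell~\cite{Burrell2021-2} on fractional Brownian images, exploiting crucially that the intermediate dimensions of $F_I$ are continuous at $\theta = 0$. First I would recall the relevant general facts: for index-$\alpha$ fractional Brownian motion $B_\alpha \colon \mathbb{R} \to \mathbb{R}$ and a bounded set $E \subset \mathbb{R}$, Burrell's results give almost surely $\dim_\mathrm{H} B_\alpha(E) = \dim_\mathrm{H} E / \alpha$ when $\dim_\mathrm{H} E \le \alpha$ (the classical potential-theoretic statement), and more importantly a Hölder-type upper bound relating $\uid B_\alpha(E)$ and $\lid B_\alpha(E)$ to the corresponding intermediate dimensions of $E$, of the shape $\uid B_\alpha(E) \le \uid E / \alpha$ almost surely (and similarly for the lower intermediate dimension and for the $\Phi$-intermediate dimensions), valid simultaneously for all $\theta$ on a single almost-sure event. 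I would also use that fractional Brownian motion is almost surely $\beta$-Hölder for every $\beta < \alpha$, which via~\eqref{generalholderint} gives the deterministic bound $\ubd B_\alpha(E) \le 1$ always (since $B_\alpha(E) \subset \mathbb{R}$), and that $\dim_\mathrm{H} B_\alpha(E) \ge \dim_\mathrm{H} E / \alpha$ always (Hölder maps cannot increase Hausdorff dimension by more than the reciprocal of the exponent, applied to $B_\alpha^{-1}$ — strictly speaking one uses the standard lower bound for Brownian images).

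For part~\eqref{browncts}, the continuity at $\theta = 0$ of $\theta \mapsto \uid F_I$ and $\theta \mapsto \lid F_I$ from Theorem~\ref{ctdfracintthm}~\eqref{ctdfraccts} gives, on the almost-sure event where Burrell's bound holds for all $\theta$,
\[ \limsup_{\theta \to 0^+} \uid B_\alpha(F_I) \le \limsup_{\theta \to 0^+} \frac{\uid F_I}{\alpha} = \frac{\dim_\mathrm{H} F_I}{\alpha} = \dim_\mathrm{H} B_\alpha(F_I) = \overline{\dim}_{\,0} B_\alpha(F_I), \]
where the penultimate equality uses $\dim_\mathrm{H} F_I = h \le \alpha$ is \emph{not} assumed here — instead one notes that continuity at $\theta=0$ for $B_\alpha(F_I)$ only requires the $\limsup$ as $\theta \to 0^+$ to be at most $\dim_\mathrm{H} B_\alpha(F_I)$, and $\dim_\mathrm{H} B_\alpha(F_I) = \dim_\mathrm{H} F_I/\alpha$ when $\dim_\mathrm{H} F_I \le \alpha$ while if $\dim_\mathrm{H} F_I > \alpha$ then $B_\alpha(F_I)$ a.s.\ contains an interval so all its intermediate dimensions equal $1$ and continuity is trivial. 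The reverse inequality is automatic from monotonicity in $\theta$ and $\overline{\dim}_{\,0} \le \uid$. The lower intermediate dimension case is identical.

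For part~\eqref{brownlarge}, assume $\alpha > h = \dim_\mathrm{H} F_I$. Then almost surely $\dim_\mathrm{H} B_\alpha(F_I) = h/\alpha$ by the classical formula, and this equals $\dim_\mathrm{H} B_\alpha(F_I) \le \ubd B_\alpha(F_I)$. For the strict upper bound, pick $\theta \in (0,1]$ small enough that $\uid F_I < \alpha$; this is possible precisely because $\theta \mapsto \uid F_I$ is continuous at $\theta = 0$ with limiting value $h < \alpha$ (this is the one place continuity is genuinely used to get a strict box-dimension bound, and is the analogue of the mechanism in Burrell's paper). Then on the almost-sure event, $\ubd B_\alpha(F_I) \le \uid B_\alpha(F_I)/1$ — wait, rather one uses $\ubd B_\alpha(F_I) \le \uid B_\alpha(F_I) \cdot (\text{something})$; the clean route is: $\uid B_\alpha(F_I) \le \uid F_I/\alpha < 1$, and then invoke the fact that for subsets of $\mathbb{R}$, $\ubd E < 1$ whenever $\uid E < 1$ for some $\theta > 0$ — no, this is false in general. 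Instead I use the sharper Burrell estimate directly on the box dimension: $\ubd B_\alpha(F_I) \le \ubd F_I / \alpha$ need not be $< 1$. The correct argument (this is the \textbf{main obstacle}, and the place to be careful) is the quantitative one from~\cite{Burrell2021-2}: continuity of the intermediate dimensions at $0$ with value $< \alpha$ forces the upper box dimension of the Brownian image to be strictly below $1$, via the estimate $\ubd B_\alpha(F_I) \le g(\uid F_I, \theta, \alpha)$ for an explicit function $g$ with $g \to h/\alpha < 1$ appropriately; I would apply that theorem verbatim. Finally, for part~\eqref{brownsmall}, if $\alpha \le h$ then $\dim_\mathrm{H} F_I \ge \alpha$, so almost surely $B_\alpha(F_I)$ has Hausdorff dimension $\ge 1$ — indeed a standard argument shows $B_\alpha(F_I)$ almost surely has non-empty interior (or at least positive Lebesgue measure) when $\dim_\mathrm{H} F_I > \alpha$, giving $\dim_\mathrm{H} = \dim_\mathrm{B} = 1$; the boundary case $\alpha = h$ requires the Hausdorff dimension formula at criticality, which still gives $\dim_\mathrm{H} B_\alpha(F_I) = h/\alpha = 1$ almost surely, and then $\ubd B_\alpha(F_I) \le 1$ forces equality throughout. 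The main obstacle throughout is correctly citing and applying the quantitative Hölder-image estimates of Burrell rather than the naive Hölder bound, since only the quantitative version delivers the \emph{strict} inequality in~\eqref{brownlarge}.
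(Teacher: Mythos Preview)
Your approach is essentially the same as the paper's: deduce all three parts from Theorem~\ref{ctdfracintthm}~\eqref{ctdfraccts} together with cited results of Burrell~\cite{Burrell2021-2} (Corollaries~3.5 and~3.7) and Kahane~\cite{Kahane1985}, with continuity at $\theta=0$ being the key input for the strict inequality in~\eqref{brownlarge}. Two small points where the paper is cleaner: the Hausdorff dimension formula $\dim_\mathrm{H} B_\alpha(F_I) = \min\{1,h/\alpha\}$ is Kahane's classical result (not Burrell's), and the paper notes explicitly that $F_I$ is Borel (from the discussion after Lemma~\ref{fitin}) so that Kahane's theorem applies --- you should include this check, since Kahane's formula requires a measurability hypothesis.
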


\begin{proof}

\eqref{browncts} follows immediately from Theorem~\ref{ctdfracintthm}~\eqref{ctdfraccts} and Burrell's result~\cite[Corollary~3.5]{Burrell2022brownian}. 

\eqref{brownlarge} The set $F_I$ is the limit set of a CIFS so it is Borel (see the discussion after Lemma~\ref{fitin}), so Kahane's general results~\cite[Chapter~18]{Kahane1985} %
give $h/\alpha = \dim_\mathrm{H} B_\alpha(F_I)$ almost surely. The middle inequality is a general property of the dimensions, and $\ubd B_\alpha(F_I) < 1$ almost surely by Theorem~\ref{ctdfracintthm}~\eqref{ctdfraccts} and Burrell's result~\cite[Corollary~3.7]{Burrell2022brownian}. 

\eqref{brownsmall} follows from Kahane's~\cite[Chapter~18]{Kahane1985} since $F_I$ is Borel. 
\end{proof}

Note that since $\dim_\mathrm{P} B_\alpha(F_I), \lbd B_\alpha(F_I) \in (\dim_\mathrm{H} B_\alpha(F_I),\ubd B_\alpha(F_I))$, if $\alpha > h$ then almost surely $\dim_\mathrm{P} B_\alpha(F_I) < 1$ and $\lbd B_\alpha(F_I) < 1$. On the other hand, if $\alpha \leq h$ then almost surely $\dim_\mathrm{P} B_\alpha(F_I) = \lbd B_\alpha(F_I) = 1$.

\subsection{Complex continued fractions}\label{compsect}

In this section we study sets of complex numbers which have a complex continued fraction expansion with restricted entries. For a non-empty $I \subseteq \{ \, m + n i : m \in \N, n \in \mathbb{Z} \, \}$, define 
\[ F_I \coloneqq \left\{ \, z \in \mathbb{C} : z = \frac{1}{b_1 + \frac{1}{b_2 + \frac{1}{\ddots}}}, b_n \in I \mbox{ for all } n \in \N \, \right\}. \]
If $1 \notin I$ then it can be verified, as in~\cite[Section~6]{Mauldin1996}, that if $1 \notin I$ then $\{ \, S_b(z) \coloneqq 1/(b+z) : b \in I \, \}$ is a CIFS with limit set $F_I$, with $X \subset \mathbb{C}$ being the closed disc centred at 1/2 with radius 1/2, and $V = B(1/2,3/4)$. If $1 \in I$ then $S_1$ is not uniformly contracting but it is straightforward to verify that $\{ \, S_b(z) \coloneqq 1/(b+z) : b \in I, b \neq 1 \, \} \cup \left\{ \, S_{1b}(z) \coloneqq \frac{1}{b+\frac{1}{1+z}} : b \in I \, \right\}$ is a CIFS with the same limit set. By~\cite[Theorem~3.15]{Mauldin1996}, the Hausdorff dimension can be determined by the topological pressure function, and has been studied in~\cite{Hanus1998} with estimates given in~\cite[Section~6]{Mauldin1996} and~\cite{Gardner1983,Priyadarshi2016,Falk2018,Ingebretson2020}. 
\begin{thm}\label{compmain}
Using the notation above, for all $\theta \in [0,1]$, 
\begin{enumerate}
\item\label{compctdfracint} For all $\theta \in [0,1]$ we have 
\begin{gather*}
 \uid F_I = \max\{h,\uid\{ \, 1/b : b \in I \, \}\}; \\
 \max\{h,\lid\{ \, 1/b : b \in I \, \}\} \leq \lid F_I \leq \max\{h,\uid\{ \, 1/b : b \in I \, \}\}.
 \end{gather*}
\item\label{compctdfraccts} The maps $\theta \mapsto \uid F_I$ and $\theta \mapsto \lid F_I$ are continuous at $\theta = 0$.
\end{enumerate}
\end{thm}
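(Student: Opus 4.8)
The plan is to mirror exactly the structure of the proof of Theorem~\ref{ctdfracintthm}, since the complex continued fraction sets are limit sets of a CIFS in precisely the same way as the real ones, with the only differences being the underlying space $X$ (now a closed disc in $\mathbb{C} \cong \mathbb{R}^2$) and the formula $S_b(z) = 1/(b+z)$ now being interpreted as a M\"obius transformation of the plane. First I would split into two cases according to whether $1 \in I$, exactly as in Theorem~\ref{ctdfracintthm}. In \textbf{Case 1} ($1 \notin I$), the system $\{S_b : b \in I\}$ is a CIFS with limit set $F_I$, so applying Theorem~\ref{mainint}~\eqref{inteq} and Corollary~\ref{lowerintcor}~\eqref{lowerint} with the set $P \coloneqq \{S_b(0) : b \in I\} = \{1/b : b \in I\}$ (noting $0 \in X$ since $X$ is the disc centred at $1/2$ of radius $1/2$, and $S_b(0) = 1/b \in S_b(X)$) immediately yields part~\eqref{compctdfracint}. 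In \textbf{Case 2} ($1 \in I$), I would use the alternative CIFS $\{S_b : b \in I, b \neq 1\} \cup \{S_{1b} : b \in I\}$ and take $P \coloneqq \{1/(1 + 1/b) : b \in I\} \cup \{1/b : b \in I, b \neq 1\}$; since $z \mapsto 1/(1+z)$ is bi-Lipschitz on the disc $X$ (its derivative is bounded away from $0$ and $\infty$ there, as $V = B(1/2,3/4)$ keeps us away from the pole at $-1$), and since removing finitely many points and taking finite unions does not affect the intermediate dimensions, we get $\uid P = \uid\{1/b : b \in I\}$ and likewise for the lower version, so Theorem~\ref{mainint}~\eqref{inteq} and Corollary~\ref{lowerintcor}~\eqref{lowerint} again give the result. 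For the lower bound in Case 2 one also uses $h = \dim_\mathrm{H} F_I \leq \lid F_I$ from~\cite[Theorem~3.15]{Mauldin1996}, exactly as in the real case.

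For part~\eqref{compctdfraccts}, the key observation is that $\{1/b : b \in I\}$ is a subset of $\{1/b : b \in \{m+ni : m \in \N, n \in \mathbb{Z}\}\}$, which is a countable subset of $\mathbb{C}$ that accumulates only at $0$. I would bound $\uid F_I = \max\{h, \uid\{1/b : b \in I\}\} \leq \max\{h, \uid\{1/b : b \in \{m+ni\}\}\}$ by monotonicity, and then show that the latter set has upper intermediate dimensions tending to $0$ as $\theta \to 0^+$. This can be done either by a direct covering argument (the points $1/(m+ni)$ with $m^2+n^2 \leq R^{-1}$ all lie in a ball of radius $\asymp \sqrt{R}$ around $0$, and there are $\asymp 1/R$ points with $m^2+n^2 > R^{-1}$ requiring individual $\delta$-balls, which is the natural two-scale estimate for sequence-type sets) or, more cleanly, by comparing with the real sequence set $\{1/n : n \in \N\}$ in a suitable way — for instance noting $\{1/b : b \in \{m+ni\}\}$ has finite upper box dimension $1/2$ and, being a "polynomially decaying" lattice inversion, its intermediate dimensions vanish at $\theta = 0$; indeed Proposition~\ref{lattice} with $p=1$, $d=2$ computes $\dim_{\,\theta} G_{1,2} = 2\theta/(1+\theta) \to 0$, and $\{1/b : b \in \{m+ni : m\in\N, n\in\mathbb{Z}\}\}$ is bi-Lipschitz equivalent to a subset of $G_{1,2}$ (it is essentially the inversion of a half-lattice). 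Thus $\uid F_I \to h = \dim_\mathrm{H} F_I = \overline{\dim}_{\,0} F_I$ as $\theta \to 0^+$, giving continuity at $\theta=0$ for the upper version; since $\dim_\mathrm{H} F_I \leq \lid F_I \leq \uid F_I$ for all $\theta$, the lower version is continuous at $\theta = 0$ as well.

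The only genuinely new ingredient compared with the real case is verifying that the relevant "reference set" $\{1/b : b \in I\}$ — a subset of the inverted integer-lattice $\{1/(m+ni)\}$ — has intermediate dimensions continuous at $0$; everything else is a transcription of the real argument with $X = [0,1]$ replaced by the disc. I expect the main (though still minor) obstacle to be pinning down cleanly the bi-Lipschitz identification of $\{1/b : b \in \{m+ni : m \in \N, n \in \mathbb{Z}\}\}$ with (a subset of) $G_{1,2}$ from Proposition~\ref{lattice}, or alternatively writing out the elementary two-scale covering estimate $N_\delta$-type bound directly; either route is routine but needs a sentence of care because of the asymmetry $m \in \N$, $n \in \mathbb{Z}$. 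Once that is in place the proof is essentially identical in wording to that of Theorem~\ref{ctdfracintthm}, so I would present it concisely, referring back to that proof for the parts that are verbatim the same.
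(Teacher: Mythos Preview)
Your proposal is correct and follows essentially the same route as the paper's own proof, which is a two-line sketch: part~\eqref{compctdfracint} is deduced from Theorem~\ref{mainint} (and Corollary~\ref{lowerintcor}) exactly as you describe, and part~\eqref{compctdfraccts} is obtained by comparing $\{1/b : b \in I\}$ with $G_{1,2}$ via Proposition~\ref{lattice}. The only small refinement is that the paper phrases the comparison as ``$\{1/b : b \in I\}$ is the disjoint union of bi-Lipschitz copies of \emph{two} subsets of $G_{1,2}$'' (splitting according to the sign of $n$), rather than a single subset of $G_{1,2}$ --- precisely the asymmetry $m \in \N$, $n \in \mathbb{Z}$ that you already flag as needing a sentence of care.
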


\begin{proof}

We sketch the proof as it is similar to the proof of Theorem~\ref{ctdfracintthm}. 

\eqref{compctdfracint} follows from Theorem~\ref{mainint}. 

\eqref{compctdfraccts} follows from~\eqref{compctdfracint} since $\{ \, 1/b : b \in I \, \}$ is the disjoint union of bi-Lipschitz copies of two subsets of $G_{1,2}$, whose intermediate dimensions are continuous by Proposition~\ref{lattice}. 
\end{proof}

\begin{example}\label{complexexample}
For $p \in (1,\infty)$ and $R \in [0,\infty)$ let $I_{p,R} \coloneqq \{ \, \lfloor m^p \rfloor + \lfloor n^p \rfloor i : n,m \in \N \, \} \setminus B(0,R)$. Then the set $\{ \, 1/b : b \in I \, \}$ is bi-Lipschitz equivalent to a cofinite subset of the set $G_{p,2}$ from Proposition~\ref{lattice}, so 
\[ \dim_{\, \theta} \{ \, 1/b : b \in I \, \} = \dim_{\, \theta} G_{p,2} = \frac{2\theta}{p+\theta}.\] 
Therefore the bounds in Theorem~\ref{compmain}~\eqref{compctdfracint} coincide and 
\[ \dim_{\, \theta} F_{I_{p,R}} = \max\left\{ \dim_\mathrm{H} F_{I_{p,R}} , \frac{2\theta}{p+\theta} \right\}. \] 
For all $t \geq 0$, writing $\simeq$ to mean up to multiplication by a positive, finite function of $t$, $p$ and $R$ and/or addition by a real-valued function of $t$, $p$ and $R$, and using the convention that $a \infty = \infty + c = \infty$ for $a \in (0,\infty)$ and $c \in \mathbb{R}$, we have
 \begin{align*}
  \psi_1(t) = \sum_{b \in I_{p,R}} ||S_b||^t &\simeq \sum_{b \in I_{p,R}} |b|^{-2t} \qquad \qquad \text{(Koebe distortion theorem)} \\
  &= \sum_{n = 0}^\infty \sum_{\substack{b \in I_{p,R} \\ 2^n \leq |b| < 2^{n+1}}} |b|^{-2t} \\
  &\simeq \sum_{n = 0}^\infty \# \{ \, b \in I_{p,R} : 2^n \leq |b| < 2^{n+1} \, \} (2^n)^{-2t} \\
  &\simeq \sum_{n=0}^\infty (2^{n/p})^2 (2^n)^{-2t} \\
  &= \sum_{n=0}^\infty 4^{n(p^{-1} - t)}. 
  \end{align*}
  Therefore the finiteness parameter $\theta_{I_{p,R}} = 1/p$. By~\cite[Theorem~1.5]{Mauldin1999}, $\dim_\mathrm{H} F_{I_{p,R}} \to 1/p$ as $R \to \infty$, so there exists $R' > 0$ such that for all $R > R'$, $\dim_\mathrm{H} F_{I_{p,R}} < \dim_\mathrm{B} F_{I_{p,R}} = 2/(p+1)$. 
\end{example}

Again we have consequences for fractional Brownian images. 

\begin{cor}
Let $\alpha \in (0,1)$ and let $B_\alpha \colon \mathbb{C} \to \mathbb{C}$ denote index-$\alpha$ fractional Brownian motion (identifying $\mathbb{C}$ with $\mathbb{R}^2$). Then for any non-empty $I \subseteq \{ \, m + n i : m \in \N, n \in \mathbb{Z} \, \}$, 
\begin{enumerate}
\item The maps $\theta \mapsto \uid B_\alpha(F_I)$ and $\theta \mapsto \lid B_\alpha(F_I)$ are almost surely continuous at $\theta = 0$. %
\item If $\alpha > (\dim_\mathrm{H} F_I)/2$ then almost surely \[(\dim_\mathrm{H} F_I)/\alpha = \dim_\mathrm{H} B_\alpha(F_I) \leq \ubd B_\alpha(F_I) < 2.\]
\item If $\alpha \leq (\dim_\mathrm{H} F_I)/2$ then almost surely $\dim_\mathrm{H} B_\alpha(F_I) = \dim_{\mathrm{B}} B_\alpha(F_I) = 2$.
\end{enumerate}
\end{cor}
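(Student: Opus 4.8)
The plan is to follow the proof of Corollary~\ref{brownian} essentially verbatim, the only change being that the ambient Euclidean dimension is now $2$ (identifying $\mathbb{C}$ with $\mathbb{R}^2$) rather than $1$. The three ingredients are: (a) $F_I$ is the limit set of a CIFS and hence a Borel set, as noted in the discussion after Lemma~\ref{fitin}; (b) the maps $\theta \mapsto \uid F_I$ and $\theta \mapsto \lid F_I$ are continuous at $\theta = 0$, which is Theorem~\ref{compmain}~\eqref{compctdfraccts}; and (c) Kahane's classical results on fractional Brownian images~\cite[Chapter~18]{Kahane1985} together with Burrell's results~\cite{Burrell2021-2} on how continuity of the intermediate dimensions at $\theta = 0$ passes to fractional Brownian images.

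First I would prove part (1): since $\theta \mapsto \uid F_I$ and $\theta \mapsto \lid F_I$ are continuous at $\theta = 0$ by (b), Burrell's~\cite[Corollary~3.5]{Burrell2021-2} immediately gives that $\theta \mapsto \uid B_\alpha(F_I)$ and $\theta \mapsto \lid B_\alpha(F_I)$ are almost surely continuous at $\theta = 0$.

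Next, for part (2), since $F_I$ is Borel, Kahane's results give $\dim_\mathrm{H} B_\alpha(F_I) = \min\{2, (\dim_\mathrm{H} F_I)/\alpha\}$ almost surely, which equals $(\dim_\mathrm{H} F_I)/\alpha$ under the hypothesis $\alpha > (\dim_\mathrm{H} F_I)/2$. The inequality $\dim_\mathrm{H} B_\alpha(F_I) \leq \ubd B_\alpha(F_I)$ is a general property of the dimensions, and the strict inequality $\ubd B_\alpha(F_I) < 2$ holds almost surely by combining (b) with Burrell's~\cite[Corollary~3.7]{Burrell2021-2}, the hypothesis $\alpha > (\dim_\mathrm{H} F_I)/2$ being exactly what is needed so that the Hausdorff dimension of the image lies strictly below the ambient dimension $2$. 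For part (3), when $\alpha \leq (\dim_\mathrm{H} F_I)/2$ Kahane's results give $\dim_\mathrm{H} B_\alpha(F_I) = 2$ almost surely, and since the box dimension lies between the Hausdorff dimension and the ambient dimension $2$, we get $\dim_\mathrm{B} B_\alpha(F_I) = 2$ as well.

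There is no genuinely hard step here; the one thing to check carefully is that Burrell's results in~\cite{Burrell2021-2} are stated (or extend with no change) for images into $\mathbb{R}^2$ rather than only $\mathbb{R}$, and that the dimension threshold in~\cite[Corollary~3.7]{Burrell2021-2} is the natural one, namely that continuity of the intermediate dimensions of $F_I$ at $\theta = 0$ together with $\dim_\mathrm{H} F_I < 2\alpha$ forces $\ubd B_\alpha(F_I) < 2$ almost surely.
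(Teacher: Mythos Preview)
Your proposal is correct and follows essentially the same approach as the paper, which simply states that the result follows from Theorem~\ref{compmain} in the same way that Corollary~\ref{brownian} follows from Theorem~\ref{ctdfracintthm}. Your identification of the three ingredients (Borel property of $F_I$, continuity at $\theta=0$ from Theorem~\ref{compmain}\eqref{compctdfraccts}, and the results of Kahane and Burrell) matches the paper's proof of Corollary~\ref{brownian} exactly, with the ambient dimension changed from $1$ to $2$.
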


\begin{proof}
Follows from Theorem~\ref{compmain} in a similar way to how Corollary~\ref{brownian} follows from Theorem~\ref{ctdfracintthm}. 
\end{proof}

Since $\dim_\mathrm{P} B_\alpha(F_I), \lbd B_\alpha(F_I) \in (\dim_\mathrm{H} B_\alpha(F_I),\ubd B_\alpha(F_I))$, if $\alpha > (\dim_\mathrm{H} F_I)/2$ then almost surely $\dim_\mathrm{P} B_\alpha(F_I) < 2$ and $\lbd B_\alpha(F_I) < 2$, whereas if $\alpha \leq (\dim_\mathrm{H} F_I)/2$ then almost surely $\dim_\mathrm{P} B_\alpha(F_I) = \lbd B_\alpha(F_I) = 2$. 
Note that it follows from Proposition~\ref{lattice} and~\cite[Corollary~3.7]{Burrell2022brownian} that for all $d \in \N$, $p \in (0,\infty)$ and $\alpha \in (0,1)$, if $B_\alpha : \Rd \to \Rd$ is index-$\alpha$ fractional Brownian motion on $\Rd$ then $\ubd B_\alpha(G_{p,d}) < d$.

\section{Generic attractors}\label{genericsect}

\subsection{Background and motivation}

Often, and especially in non-conformal settings and in the presence of overlaps, it is difficult to compute the dimension of a particular IFS attractor.  In a seminal paper from 1988 ~\cite{Falconer1988} Falconer introduced the idea of studying the generic dimension of a (finitely generated) self-affine set by fixing a set of matrices and then randomising the translations in a  suitable way. It turns out that, for Lebesgue almost every choice of translations, the Hausdorff and box dimension of the associated self-affine set are given by the affinity dimension: a dimension formula depending only on the matrices. 

K\"aenm\"aki and  Reeve~\cite{Kaenmaki2014} extended Falconer's  approach to the theory of infinitely generated self-affine sets.  Here one needs to randomise infinitely many translations, and do so in a manner which outputs a bounded set.  As such,  the natural space to draw from is $V^\mathbb{N}$ for some bounded set $V \subseteq \Rd$ with positive $d$-dimensional Lebesgue measure $\mathcal{L}^d(V)>0$.  For convenience from now on we assume $V= [0,1)^d$.  The space $V^{\N}$ carries a natural infinite product probability measure
\[
\mu = \prod_{i \in \mathbb{N}} \mathcal{L}^d \vert_V.
\]
 K\"aenm\"aki and  Reeve proved that if one fixes an infinite collection of strictly contracting matrices on $\Rd$ and randomises the translations according to $\mu$, then  the Hausdorff dimension of the associated attractor is almost surely given by the natural extension of the affinity dimension to the infinite case.  In comparison with Falconer's result, this notably omits the box dimension.  There is good reason for this since the box dimension of an infinitely generated attractor depends much more sensitively on the translations themselves, as we have seen above. 

We show here that almost surely the box dimension is $d$, that is, the ambient spatial dimension.  In fact we show more.  We show that for an arbitrary IIFS (not necessarily consisting of affine maps) the associated attractor is generically somewhere dense, and therefore the box and intermediate dimensions are all generically equal to $d$. 
 Moreover, \emph{generically} can refer to either $\mu$ almost surely, or for a comeagre set of translates (topologically generic).

\subsection{Results}

Fix an IIFS $\{S_i\}_{i \in \mathbb{N}}$ defined on $[0,2]^d$ with the property that $S_i([0,2]^d) \subseteq [0,1]^d$ for all $i \in \mathbb{N}$. For $t = (t_1, t_2, \dots) \in V^\mathbb{N}$ let  $F_t$ denote the attractor of the IIFS $\{S_i+t_i\}_{i \in \mathbb{N}}$ with $S_i+t_i$ defined on $[0,2]^d$ by $(S_i+t_i)(x) = S_i(x)+t_i$. We assume throughout that the contraction ratios of the maps $S_i$ only accumulate at zero. The assumption that $[0,2]^d$ maps into $[0,1]^d$ is to ensure the maps can be composed with translations in a well-defined way. 

Write $\fix(g)$ to denote the unique fixed point of a contraction $g$ on $[0,2]^d$.  We use the following simple lemma to relate random translations to random fixed points. Fixed points are useful because they necessarily belong to the attractor. In what follows balls are open. 

\begin{lma}\label{genericlma}
Let $u \in V$, $g$ be a contraction on $[0,2]^d$ with $g([0,2]^d) \subseteq [0,1]^d$, $q \in [0,2]^d$, and $\delta>0$. Then
\[
\fix(g+u) \in B(q,\delta) \Leftrightarrow u \in B(q-g(\fix(g+u)), \delta).
\]
\end{lma}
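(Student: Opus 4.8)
The plan is to unwind both sides of the claimed equivalence directly from the definition of the fixed point. Write $p \coloneqq \fix(g+u)$; this is the unique point of $[0,2]^d$ satisfying $(g+u)(p) = p$, i.e. $g(p) + u = p$, which rearranges to $u = p - g(p)$. This single identity is the whole engine of the lemma: it expresses $u$ in terms of $p$ and $g(p)$, and since the right-hand side of the claimed equivalence is $B(q - g(p), \delta)$ (noting $\fix(g+u) = p$ inside that expression), the statement reduces to a tautology once we substitute.

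Concretely, I would argue as follows. Assume first that $p = \fix(g+u) \in B(q,\delta)$, so $\lVert p - q \rVert < \delta$. Using $u = p - g(p)$ we compute $\lVert u - (q - g(p)) \rVert = \lVert (p - g(p)) - (q - g(p)) \rVert = \lVert p - q \rVert < \delta$, which says exactly that $u \in B(q - g(p), \delta) = B(q - g(\fix(g+u)), \delta)$. Conversely, if $u \in B(q - g(\fix(g+u)), \delta)$, then again writing $p = \fix(g+u)$ and using $u = p - g(p)$, the same chain of equalities gives $\lVert p - q \rVert = \lVert u - (q - g(p)) \rVert < \delta$, so $\fix(g+u) = p \in B(q,\delta)$. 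Hence the two conditions are equivalent.

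The only genuine point to check before running this is that $\fix(g+u)$ is well-defined, i.e. that $g+u$ really is a contraction of $[0,2]^d$ into itself: since $g([0,2]^d) \subseteq [0,1]^d$ and $u \in V = [0,1)^d$, we have $(g+u)([0,2]^d) \subseteq [0,1]^d + [0,1)^d \subseteq [0,2]^d$, and $g+u$ has the same (sub-unit) Lipschitz constant as $g$, so the Banach fixed point theorem applies and the fixed point exists and is unique. There is no real obstacle here; the ``hard part'', such as it is, is simply keeping track of the fact that the occurrence of $\fix(g+u)$ inside $B(q - g(\fix(g+u)),\delta)$ on the right-hand side refers to the same point $p$ as on the left, so that the centre of that ball depends on $u$ — but this is precisely what makes the substitution $u = p - g(p)$ close the loop.
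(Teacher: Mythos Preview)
Your proof is correct and follows essentially the same approach as the paper: both arguments reduce to the identity $\fix(g+u) = g(\fix(g+u)) + u$ and observe that $\lVert \fix(g+u) - q \rVert = \lVert u - (q - g(\fix(g+u))) \rVert$. Your version is slightly more detailed in that you explicitly verify that $g+u$ maps $[0,2]^d$ into itself so that the fixed point exists, which the paper leaves implicit.
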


\begin{proof}
By definition of a fixed point
\begin{align*}
\fix(g+u) \in B(q,\delta) &\Leftrightarrow ||\fix(g+u) - q || < \delta \\
&\Leftrightarrow ||g(\fix(g+u))+u - q || < \delta \\
&\Leftrightarrow u \in B(q-g(\fix(g+u)), \delta)
\end{align*}
as required.
\end{proof}

\begin{thm}\label{measure}
For $\mu$-almost all $t \in V^\mathbb{N}$, the attractor $F_t$ is somewhere dense in $[0,2]^d$, so for all $\theta \in (0,1]$,
\[
\dim_{\, \theta} F_t = \dim_\mathrm{B} F_t = d.
\]
\end{thm}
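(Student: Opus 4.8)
The plan is to show that the set of translation sequences $t \in V^{\mathbb{N}}$ for which $F_t$ fails to be somewhere dense in $[0,2]^d$ is $\mu$-null. Once density of $F_t$ in some open ball is established, the final sentence follows immediately: an open subset of $[0,2]^d$ has box dimension $d$, the box dimension is stable under taking closures, and $\dim_{\,\theta} F_t \leq \dim_{\mathrm B} F_t \leq d$ always holds while $F_t$ containing a dense subset of a ball forces $N_\delta(F_t) \gtrsim \delta^{-d}$, hence $\lid F_t \geq \lbd F_t = d$ too; combined with the universal upper bound $d$ this pins all the intermediate and box dimensions to $d$.

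The key mechanism is Lemma~\ref{genericlma}, which lets us convert "the fixed point of $S_w + (\text{appropriate translate})$ lands in a prescribed ball" into "a certain coordinate of $t$ lands in a ball of the same radius". More precisely, I would fix a countable dense sequence $(q_k)_{k \in \mathbb{N}}$ in $[0,2]^d$ and, for each $k$ and each $m \in \mathbb{N}$, try to show that $\mu$-almost surely there is some finite word $w \in \mathbb{N}^*$ with $\fix((S+t)_w) \in B(q_k, 1/m)$, where $(S+t)_w$ denotes the composition of the translated maps along $w$ — note this fixed point lies in $F_t$. Since the contraction ratios accumulate only at zero, for any prescribed small target radius we can choose a single index $i$ whose map $S_i + t_i$ is contracting enough that, \emph{whatever} the values of the earlier translates, the fixed point of a long composition ending appropriately is controlled essentially by $t_i$ via Lemma~\ref{genericlma}: the event $\{\fix(\cdot) \in B(q_k,1/m)\}$ corresponds to $t_i$ lying in a ball of radius $1/m$ whose centre depends only on $q_k$, $S_i$, and the fixed point itself (hence on the remaining coordinates). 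The crucial point is that this ball has radius bounded below independently of the other coordinates, so conditional on all coordinates except $t_i$ this event has probability at least $c(m) > 0$, a constant depending only on $m$ and $d$. Having infinitely many independent indices $i$ available (the $t_i$ are i.i.d.) and a uniform positive lower bound on the conditional probability, a Borel–Cantelli / independence argument shows the event occurs almost surely. Intersecting over the countably many pairs $(k,m)$ gives a $\mu$-full set of $t$ for which $F_t$ is dense in a neighbourhood of every $q_k$, in particular somewhere dense.

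The main obstacle is making precise the claim that, by exploiting an index $i$ with very small contraction ratio, one can make the target ball for $t_i$ have radius bounded below \emph{uniformly in the other coordinates}, while simultaneously ensuring the fixed point of the relevant composition lies close to $q_k$. The subtlety is that in Lemma~\ref{genericlma} the centre $q - g(\fix(g+u))$ depends on $u$ itself; one resolves this by noting $g(\fix(g+u))$ varies over a set of diameter at most (contraction ratio of $g$)$\cdot\,\mathrm{diam}[0,2]^d$, which is tiny when the ratio is small, so the event $\{u \in B(q - g(\fix(g+u)), \delta)\}$ contains a genuine ball in $u$-space of radius $\delta - (\text{small})$ centred at a point depending only on the other data. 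Choosing $\delta$ comparable to $1/m$ and the contraction ratio much smaller than $1/m$ closes the gap. The remaining bookkeeping — that such a small-ratio index can be inserted into a composition whose fixed point is where we want, using that the maps are into $[0,1]^d$ so compositions are well-defined — is routine given the standing assumptions.
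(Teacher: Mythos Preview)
Your core mechanism --- Lemma~\ref{genericlma}, small contraction ratios, and a Borel--Cantelli/independence argument over the i.i.d.\ coordinates $t_i$ --- is exactly right and is what the paper uses. But there is a genuine gap. You claim that the event $\{u \in B(q - g(\fix(g+u)),\delta)\}$ contains a ball in $u$-space of radius close to $\delta$, and you correctly argue that the centre is essentially independent of $u$ once the contraction ratio of $g$ is small. What you do not address is whether this ball meets $V=[0,1)^d$ at all. The centre is approximately $q - S_i(\text{anything}) \approx q - \fix(S_i)$, and if this point lies outside $\overline{V}$ then the event has $\mathcal{L}^d$-probability zero, not a uniform positive constant. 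Your asserted ``uniform positive lower bound on the conditional probability'' therefore fails without further input, and your conclusion that $F_t$ is dense near \emph{every} $q_k \in [0,2]^d$ is in general false: if all the maps $S_i$ send $[0,2]^d$ into a small subcube of $[0,1]^d$, then $F_t$ is trapped in a proper subcube of $[0,2]^d$ regardless of $t$.

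The missing idea, which the paper supplies, is to fix in advance an accumulation point $z \in [0,1]^d$ of the set $\{\fix(S_i)\}_i$ and then restrict the target points $q$ to the cube $V+z$. For such $q$, the approximate centre $q-z$ lies in $V$, and for the infinitely many $i$ with both $\fix(S_i)$ close to $z$ and contraction ratio small, the ball $B(q - S_i(\fix(S_i+t_i)),\delta)\cap V$ genuinely has Lebesgue measure bounded below uniformly. With this restriction the independence argument goes through and one obtains that $F_t$ is almost surely dense in $V+z$; this is enough for ``somewhere dense''. Note too that once $z$ is fixed there is no need for compositions: single maps $S_i+t_i$ (length-one words) already suffice, which is what the paper does and simplifies the bookkeeping you allude to.
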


\begin{proof}
Let $z \in [0,1]^d$ be an accumulation point of the set $\{\fix(S_i)\}_i$. We prove that $F_t$ is almost surely dense in the explicit (unit) square $V + z$. The idea is that for infinitely many $i \in \N$ the fixed point of $S_i$ will be close to $z$ and the contraction ratio of $S_i$ will be small, so if the fixed point of $S_i + t_i$ is far away from a given point $q \in V + z$ then $t_i$ must be far away from $q-z$, which we use to bound the measure. We have 
\begin{align*}
&\left\{ \, t \in V^\mathbb{N} :  F_t \text{ is nowhere dense} \, \right\}  \subseteq    \left\{ \, t \in V^\mathbb{N} :  F_t \text{ is not dense in $V+z$} \, \right\} \\
& \subseteq \bigcup_{q \in (V+z) \cap \mathbb{Q}^d} \bigcup_{\delta  \in \mathbb{Q}^+} \left\{ \, t \in V^\mathbb{N} :  \forall i \in \mathbb{N} , \, \fix(S_i+t_i) \notin  B(q,\delta)  \, \right\} \\ 
& =  \bigcup_{q \in (V+z) \cap \mathbb{Q}^d} \bigcup_{\delta  \in \mathbb{Q}^+}   \left\{ \, t \in V^\mathbb{N} :  \forall i \in \mathbb{N} , \,  t_i  \notin  B(q-S_i(\fix(S_i +t_i)),\delta)  \, \right\} &\text{(Lemma~\ref{genericlma})}\\ 
&= \bigcup_{q \in (V+z) \cap \mathbb{Q}^d} \bigcup_{\delta  \in \mathbb{Q}^+} \bigcap_{N=1}^\infty T_{q,\delta, N}
\end{align*}
where for $N \in \mathbb{N}$,
\[
T_{q,\delta, N} \coloneqq \left(\prod_{i=1}^N \{ \, t \in V : || q-S_i(\fix(S_i +t)) - t || \geq \delta \, \} \right) \times \left(\prod_{j=N+1}^\infty V \right).
\]
For each $i \in \N$, $\{ \, t \in V : || q-S_i(\fix(S_i +t)) - t || \geq \delta \, \}$ is a Borel set as it is the preimage of $[\delta,\infty)$ by a continuous function, so each $T_{q,\delta, N}$ is $\mu$-measurable. 
By definition of $z$ we can find infinitely many $i \in \mathbb{N}$ such that the maximum of $||\fix(S_i) - z||$ and the contraction ratio of $S_i$ is less than $\delta/10$. 
For any such $i \in \N$, if $t \in B(q-z,\delta/2) \cap V$ then 
\begin{align*} 
|| q-S_i(\fix(S_i + t)) - t || &\leq ||q-z-t|| + ||z-\fix(S_i)|| + ||\fix(S_i) - S_i(\fix(S_i + t))|| \\
&< \frac{\delta}{2} + \frac{\delta}{10} + 2\frac{\delta}{10} < \delta.
\end{align*}
Therefore for infinitely many $i \in \mathbb{N}$ we have  
\[
\mathcal{L}^d \Big( \{ t \in V : || q-S_i(\fix(S_i +t)) - t || \geq \delta \} \Big) \leq 1-2^{-d} \left(\frac{\min\{\delta,1/2\}}{2}\right)^d k_d < 1, 
\] %
where $k_d$ is the $d$-dimensional Lebesgue measure of a ball in $\Rd$ of unit radius.
This uniform bound away from 1 is independent of $i$, so for all $q$ and $\delta$, $\mu(T_{q,\delta, N}) \to 0$ as $N \to \infty$, so $\bigcap_{i=1}^\infty T_{q,\delta, N}$ is $\mu$-measurable (as the countable intersection of $\mu$-measurable sets) with $\mu\left(\bigcap_{i=1}^\infty T_{q,\delta, N}\right) = 0$. 
Therefore 
\[ \bigcup_{q \in (V+z) \cap \mathbb{Q}^d} \bigcup_{\delta  \in \mathbb{Q}^+} \bigcap_{i=1}^\infty T_{q,\delta, N}\]
 is a countable union of $\mu$-measurable sets with $\mu$-measure 0, so it is itself $\mu$-measurable with $\mu$-measure 0, which proves the result. 
\end{proof}

A simple modification of the above argument proves that $F_t$ is somewhere dense for a prevalent set of $t \in V^\mathbb{N}$. 
In finite dimensional Banach spaces, being prevalent is equivalent to having full Lebesgue measure, but the notion of prevalence applies to more general topological groups as well, see~\cite{Ott2005}. 
To equip $V^\mathbb{N}$ with a topological group structure, we take as the infinite product of the group $V$ under addition mod 1 with the product topology.  A subset $A$ of a topological group $G$ is \emph{prevalent} if the complement of $A$ is contained in a Haar-null set.  A \emph{Haar-null} set is a Borel set $B \subseteq G$ for which there exists a Borel probability measure $\nu$ on $G$ such that  $\nu(gBh) = 0$ for all $g,h \in G$ (for abelian groups, $gBh$ can be replaced by $gB$, see~\cite{Elekes2020}). 

\begin{thm}\label{prevalent}
For a prevalent set of $t \in V^\mathbb{N}$, the attractor $F_t$ is somewhere dense in $[0,2]^d$.  In particular, for all $\theta \in (0,1]$,
\[
\dim_{\, \theta} F_t = \dim_\mathrm{B} F_t = d.
\]
\end{thm}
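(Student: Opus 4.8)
The plan is to rerun the argument of Theorem~\ref{measure}, but to recast the ``$\mu$-almost all'' conclusion in the language of prevalence, using the product measure $\mu$ itself as the Haar-null witness measure. Recall that here $V^{\mathbb{N}}$ carries the group operation of coordinatewise addition mod $1$, so that $\mu = \prod_{i\in\mathbb{N}}\mathcal{L}^d\vert_V$ is (normalised) Haar measure on the compact abelian group $V^{\mathbb{N}}$. Fix an accumulation point $z \in [0,1]^d$ of $\{\fix(S_i)\}_i$ and, exactly as in the proof of Theorem~\ref{measure}, put $E_i^{q,\delta} \coloneqq \{\, t\in V : ||\, q - S_i(\fix(S_i+t)) - t\,|| \geq \delta\,\}$ and
\[
T_{q,\delta,N} \coloneqq \Bigl(\prod_{i=1}^{N} E_i^{q,\delta}\Bigr)\times\Bigl(\prod_{j=N+1}^{\infty} V\Bigr),
\qquad
B \coloneqq \bigcup_{q \in (V+z)\cap\mathbb{Q}^d}\ \bigcup_{\delta\in\mathbb{Q}^+}\ \bigcap_{N=1}^{\infty} T_{q,\delta,N}.
\]
The inclusion chain in the proof of Theorem~\ref{measure} shows that $\{\, t : F_t\text{ is nowhere dense}\,\} \subseteq B$, so the complement of the ``good'' set $A \coloneqq \{\, t\in V^{\mathbb{N}} : F_t\text{ is somewhere dense in }[0,2]^d\,\}$ is contained in $B$. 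Moreover, since $t \mapsto \fix(S_i+t)$ is continuous (the maps being uniformly contracting), each $E_i^{q,\delta}$ is closed, hence each $T_{q,\delta,N}$ is Borel and therefore so is $B$.

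It then suffices to show that $B$ is Haar-null with witness $\mu$, that is, $\mu(g+B)=0$ for every $g=(g_1,g_2,\dots)\in V^{\mathbb{N}}$ (one-sided translates suffice, the group being abelian). Fix $g$. Distributing the translation over the unions and intersections gives $g+T_{q,\delta,N} = \bigl(\prod_{i=1}^{N}(g_i+E_i^{q,\delta})\bigr)\times\bigl(\prod_{j>N}V\bigr)$, where $g_i+\,\cdot\,$ is addition mod $1$ on $V=[0,1)^d$. The key point is that this twisted translation is a measure-preserving bijection of the torus $[0,1)^d$, so $\mathcal{L}^d(g_i+E_i^{q,\delta}) = \mathcal{L}^d(E_i^{q,\delta})$ for all $i$; consequently every per-coordinate estimate from the proof of Theorem~\ref{measure} is unaffected. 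In particular, for each fixed $q,\delta$ there are infinitely many $i$ with $\max\{||\fix(S_i)-z||,\ \text{contraction ratio of }S_i\} < \delta/10$, and for each such $i$ one has the same uniform bound $\mathcal{L}^d(g_i+E_i^{q,\delta}) = \mathcal{L}^d(E_i^{q,\delta}) \leq 1 - 2^{-d}(\min\{\delta,1/2\}/2)^d k_d < 1$ as there. Since $(T_{q,\delta,N})_N$ decreases with $N$ and translation preserves inclusions, $\mu\bigl(\bigcap_{N}(g+T_{q,\delta,N})\bigr) = \lim_{N\to\infty}\prod_{i=1}^{N}\mathcal{L}^d(g_i+E_i^{q,\delta}) = 0$, and summing over the countably many pairs $(q,\delta)$ yields $\mu(g+B)=0$.

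Thus $B$ is a Borel Haar-null set containing the complement of $A$, so $A$ is prevalent. For $t\in A$ the closure $\overline{F_t}$ has nonempty interior, so $\dim_{\,\theta}F_t = \dim_\mathrm{B}F_t = d$ for all $\theta\in(0,1]$, exactly as deduced in Theorem~\ref{measure} (using that $F_t\subseteq[0,2]^d$ and that these dimensions are monotone and invariant under taking closures). I expect the only real subtlety to be the bookkeeping in the second paragraph: one must check that the mod-$1$ twist in the group law does not corrupt the per-coordinate measure estimates, and that $B$ is genuinely Borel — both routine, the former because torus rotations preserve Lebesgue measure and the latter because $t\mapsto\fix(S_i+t)$ is continuous.
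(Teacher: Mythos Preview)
Your proof is correct and follows essentially the same approach as the paper: take $\mu$ as the witness measure and use the translation invariance of $\mathcal{L}^d\vert_V$ under the mod-$1$ action to carry the estimates of Theorem~\ref{measure} over to every translate. One small simplification worth noting: since $\mu$ is already the (normalised) Haar measure on the compact abelian group $V^{\mathbb{N}}$, it is globally translation invariant, so once you have $\mu(B)=0$ from Theorem~\ref{measure} you get $\mu(g+B)=\mu(B)=0$ immediately without rerunning the per-coordinate argument; your explicit verification is correct but not strictly needed.
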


\begin{proof}
Let $B= \{ t \in V^\mathbb{N} : F_t \text{ is nowhere dense}$\}.  This is easily seen to be a Borel set. Choose $\nu = \mu$, where $\mu$ is as above.  The proof that $\mu(gB) = 0$ for all $g \in  V^\mathbb{N}$ then proceeds as in the proof of Theorem~\ref{measure} using the fact that $\mathcal{L}^d \vert_V$ is invariant under the action of $V$.
\end{proof}

Finally, we establish a topological result. We endow $V^\mathbb{N}$ with the Hilbert cube metric
\[
d(t, s) = \left(\sum_{i=1}^\infty \frac{||t_i-s_i||^2}{i^2}\right)^{1/2},
\]
noting that this generates the product topology on $V^\mathbb{N}$.
Recall that a subset of $V^{\N}$ is called \emph{residual} or \emph{comeagre} (topologically generic) if it contains a countable intersection of open dense sets. 
Note that $V$ is homeomorphic to the separable complete metric space $[0,\infty)$, so $V$ is a Polish space, hence the countable product $V^{\N}$ is also a Polish space. The Baire Category Theorem therefore implies that $V^{\N}$ is a Baire space, meaning that residual subsets are dense. 

\begin{thm}\label{baire}
For a residual  set of $t \in V^\mathbb{N}$, the attractor $F_t$ is somewhere dense in $[0,2]^d$.  In particular, for all $\theta \in (0,1]$,
\[
\dim_{\, \theta} F_t = \dim_\mathrm{B} F_t = d.
\]
\end{thm}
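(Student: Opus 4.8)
The plan is to re-run the argument of Theorem~\ref{measure} in the language of Baire category. First, observe that $V=[0,1)^d$ is a Polish space (it is a $G_\delta$ subset of $\mathbb{R}^d$), so the product topology on $V^\mathbb{N}$ is Polish; since the Hilbert cube metric $d$ induces this topology, $(V^\mathbb{N},d)$ is a Baire space. It therefore suffices to show that
\[ \mathcal{B} \coloneqq \{\, t \in V^\mathbb{N} : F_t \text{ is nowhere dense} \,\} \]
is meagre, for then its complement contains a dense $G_\delta$ set and the set of $t$ with $F_t$ somewhere dense is residual. Fix an accumulation point $z \in [0,1]^d$ of $\{\fix(S_i)\}_i$ as in the proof of Theorem~\ref{measure}. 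That proof establishes the inclusion
\[ \mathcal{B} \subseteq \bigcup_{q \in (V+z)\cap\mathbb{Q}^d}\ \bigcup_{\delta \in \mathbb{Q}^+} E_{q,\delta}, \qquad E_{q,\delta} \coloneqq \bigcap_{N=1}^\infty T_{q,\delta,N} = \prod_{i=1}^\infty A_i^{q,\delta}, \]
where $A_i^{q,\delta} \coloneqq \{\, t \in V : ||q - S_i(\fix(S_i+t)) - t|| \geq \delta \,\}$. As this is a countable union, it is enough to prove that each $E_{q,\delta}$ is nowhere dense.

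Next I would note that each $A_i^{q,\delta}$ is relatively closed in $V$: the map $t \mapsto \fix(S_i+t)$ is continuous (by the uniform contraction principle, since $S_i+t$ contracts $[0,2]^d$ with ratio independent of $t$), so $A_i^{q,\delta}$ is the preimage of $[\delta,\infty)$ under a continuous function. Hence $E_{q,\delta}=\prod_i A_i^{q,\delta}$ is closed in $V^\mathbb{N}$, and it remains to show it has empty interior. The key input is precisely the estimate already carried out in the proof of Theorem~\ref{measure}: there are infinitely many \emph{good} indices $i$ --- those with $\max\{||\fix(S_i)-z||,\ \text{contraction ratio of } S_i\} < \delta/10$ --- and for every such $i$ the nonempty relatively open set $W \coloneqq B(q-z,\delta/2)\cap V$ is disjoint from $A_i^{q,\delta}$. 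Now let $U$ be any nonempty basic open subset of $V^\mathbb{N}$; it constrains only finitely many coordinates, so we may choose a good index $i_0$ not among them and a point $t \in U$ with $t_{i_0}\in W$. Then $t_{i_0}\notin A_{i_0}^{q,\delta}$, so $t \notin E_{q,\delta}$, whence $U \not\subseteq E_{q,\delta}$. Thus $E_{q,\delta}$ is a closed set with empty interior, i.e.\ nowhere dense.

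Combining these steps, $\mathcal{B}$ is contained in a countable union of nowhere dense sets, hence is meagre, and by the Baire category theorem the set of $t \in V^\mathbb{N}$ for which $F_t$ is somewhere dense contains a dense $G_\delta$ set, so it is residual. The dimension statement then follows exactly as in the proof of Theorem~\ref{measure}: if $F_t$ is somewhere dense in $[0,2]^d$ then $\overline{F_t}$ contains a ball $B_0$, so $\dim_\mathrm{B} F_t = d$; moreover any cover of $F_t$ by sets of diameter in $[\delta^{1/\theta},\delta]$ becomes, after taking closures (which preserve diameters), such a cover of $B_0$ with the same $s$-sum, so $\dim_{\,\theta} F_t \geq \dim_{\,\theta} B_0 = d$, and with $\uid F_t \leq d$ this gives $\dim_{\,\theta} F_t = d$ for every $\theta \in (0,1]$. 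I do not anticipate a serious obstacle: the only genuinely new ingredients beyond Theorem~\ref{measure} are the facts that $V^\mathbb{N}$ is a Baire space and that $E_{q,\delta}$ is a \emph{product} set, the latter reducing nowhere-density to the single-coordinate observation --- already established in the measure estimate --- that $A_i^{q,\delta}$ omits a fixed open set for infinitely many $i$.
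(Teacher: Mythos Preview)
Your argument is correct and is essentially the paper's proof read through De Morgan's laws: the paper shows directly that the complement $T_{q,\delta}$ of your $E_{q,\delta}$ is open and dense (so the good set contains a countable intersection of dense open sets), whereas you show each $E_{q,\delta}$ is closed with empty interior (so the bad set is meagre). Your verification that $E_{q,\delta}$ has empty interior --- picking a good index $i_0$ outside the finitely many constrained coordinates and placing $t_{i_0}\in B(q-z,\delta/2)\cap V$ --- is exactly dual to the paper's density argument, which approximates an arbitrary $t$ by replacing $t_i$ with $q-z$ for large $i$.
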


\begin{proof}
Let $z \in [0,1]^d$ be as in the proof of Theorem~\ref{measure}. Then
\begin{align*}
& \left\{ \, t \in V^\mathbb{N} :  F_t \text{ is somewhere dense} \, \right\}   \\ 
& \supseteq   \left\{ \, t \in V^\mathbb{N} :  F_t \text{ is  dense in $V+z$} \, \right\} \\  
& =  \bigcap_{q \in (V+z) \cap \mathbb{Q}^d} \bigcap_{\delta  \in \mathbb{Q}^+}  \left\{ \, t \in V^\mathbb{N} :  \exists i \in \mathbb{N}, \, \fix(S_i+t_i) \in  B(q,\delta) \, \right\} \\ 
& =  \bigcap_{q \in (V+z) \cap \mathbb{Q}^d} \bigcap_{\delta  \in \mathbb{Q}^+}  \left\{ \, t \in V^\mathbb{N} :  \exists i \in \mathbb{N}, \, t_i  \in  B(q-S_i(\fix(S_i+t_i)) ,\delta)  \, \right\} &\text{(Lemma~\ref{genericlma})}\\ 
& \eqqcolon  \bigcap_{q \in (V+z) \cap \mathbb{Q}^d} \bigcap_{\delta  \in \mathbb{Q}^+} T_{q,\delta}.
\end{align*}
Fix $q \in (V+z) \cap \mathbb{Q}^d $ and $ \delta  \in \mathbb{Q}^+$. The set $T_{q,\delta}$ is immediately seen to be open since $\fix(S_i+t_i)$ is continuous in $t_i$ and we use open balls. Moreover, it is also dense since an element $t \in V^{\mathbb{N}}$ may be approximated arbitrarily well in the metric $d$ within the set $T_{q,\delta}$ by replacing $t_i$ with $q-z$ for sufficiently large $i$.
\end{proof}

\begin{rem}\label{modifycontractions}
In the above setting, all the contraction ratios were bounded above by $1/2$, but we can easily avoid this.
Indeed, fix any $c>0$ and fix any IIFS $\{S_i\}_{i \in \N}$ of contractions defined on $[0,1+c]^d$ satisfying $S_i([0,1+c]^d) \subseteq [0,1]^d$ for all $i \in \N$. Then the contraction ratios are bounded above by $\frac{1}{1-c}$ and we assume they accumulate only at 0. We can let $V \coloneqq [0,c)^d$, and again $V^\N$ can be equipped with a natural probability measure by taking the infinite product of the Lebesgue measure on $V$ and then normalising. Moreover, $V^\N$ can be equipped with a topological group structure by taking the infinite product of the group $V$ under addition mod $c$ with the product topology. Then if $t = (t_1,t_2,\ldots) \in V^\N$ then $\{S_i + t_i\}_{i \in \N}$ is an IIFS of contractions on $[0,1+c]^d$, with limit set $F_t$, say. Similar proofs show that under the assumptions of any one of Theorem~\ref{measure}, \ref{prevalent} or \ref{baire}, $F_t$ is somewhere dense in $[0,1+c]^d$, so the same conclusions about dimensions hold. 
\end{rem}

Under the assumptions of any one of Theorem~\ref{measure}, \ref{prevalent} or \ref{baire}, either in the setting of those theorems or in the more general setting of Remark~\ref{modifycontractions}, the attractor $F_t$ is somewhere dense. This means that if $\dim$ is any notion of dimension which is stable under closure, for example any of the $\Phi$-intermediate or Assouad-type dimensions, then $\dim F_t = d$. 
If all of the $S_i$ are bi-Lipschitz, then by a result of Mauldin and Urbański~\cite[Theorem~3.1]{Mauldin1996} the upper box and packing dimensions of the attractor coincide, so under the assumptions of any one of Theorem~\ref{measure}, \ref{prevalent} or \ref{baire} we have $\dim_\mathrm{P} F_t = d$.

\section*{Acknowledgements}
We thank Lars Olsen, P\'eter Varj\'u, and an anonymous referee for some helpful comments. 
Both authors were financially supported by a Leverhulme Trust Research Project Grant (RPG-2019-034), and JMF was also supported by an EPSRC Standard Grant (EP/R015104/1).

\section*{References}

\printbibliography[heading=none]

\end{document}